\theoremstyle{definition}
\newtheorem{definition}{Definition}[section]
\newtheorem{notation}[definition]{Notation}
\newtheorem{question}[definition]{Question}
\newtheorem{rem}[definition]{Remark}
\newtheorem{example}[definition]{Example}
\theoremstyle{plain}
\newtheorem{thm}[definition]{Theorem}
\newtheorem{lemma}[definition]{Lemma}
\newtheorem{prop}[definition]{Proposition}
\newtheorem{cor}[definition]{Corollary}
\newtheorem{result}{Theorem}[section]
\newtheorem{rescor}{Corollary}[section]
\newcommand{\nodetails}[2]{ \ifthenelse{\not \boolean{details}}{#1}{#2} }
\newcommand{\cC}{\mathcal{C}}
\newcommand{\cI}{\mathcal{I}}
\newcommand{\cO}{\mathcal{O}}
\newcommand{\cP}{\mathcal{P}}
\newcommand{\fp}{P}
\newcommand{\uM}{\underline{M}}
\newcommand{\uR}{\underline{R}}
\newcommand{\upi}{\underline{\pi}}
\newcommand{\Set}{\mathcal{S}et}
\newcommand{\OrbG}{\mathcal{O}rb_G}
\newcommand{\SpG}{\mathrm{Sp}^G}
\newcommand{\sphere}{\mathbb{S}}
\DeclareMathOperator*{\hocolim}{hocolim}
\newcommand{\Loc}[2]{#1\lbrack #2^{-1} \rbrack}
\newcommand{\AG}{A(G)}
\newcommand{\Abl}{A(-)}
\newcommand{\AGpi}{A(G)_{(\fp)}}
\newcommand{\Ablpi}{A(-)_{(\fp)}} 
\newcommand{\Ind}{\mathrm{Ind}}
\newcommand{\Res}{\mathrm{Res}}
\newcommand{\Osolv}[1]{O^\mathrm{solv}(#1)}
\newcommand{\Opi}[1]{O^\fp(#1)}
\newcommand{\IL}{\mathcal{I}_L}
\newcommand{\OL}{\mathcal{O}_L}
\def \wrt {with respect to }
\newcommand{\Q}{\mathbb{Q}}
\newcommand{\Z}{\mathbb{Z}}
\newcommand{\N}{\mathbb{N}}
\newcommand{\im}{\mathrm{im}}
\newcommand{\id}{\mathrm{id}}
\newcommand{\Hom}{\mathrm{Hom}}
\renewcommand{\O}{\mathrm{O}}
\newcommand{\redfont}[1]{{\color{red} #1}}
\newcommand\restr[2]{{
\left.\kern-\nulldelimiterspace 
#1 
\vphantom{\big|} 
\right|_{#2} 
}}
\author{Benjamin B\"{o}hme}
\address{Mathematisches Institut\\
Universit\"{a}t Bonn\\
Endenicher Allee 60\\
53115 Bonn\\
Germany}
\email{boehme@math.uni-bonn.de}
\title{Multiplicativity of the idempotent splittings of \\ the Burnside ring and the $G$-sphere spectrum}
\subjclass[2000]{55P91, 55P43, 55Q91, 55S91, 19A22}
\keywords{Equivariant stable homotopy theory, Hill-Hopkins-Ravenel norm, equivariant commutative
ring spectrum, Burnside ring, multiplicative induction, Tambara functor}
\begin{document}
\pagestyle{headings}
\maketitle
\thispagestyle{empty}

\begin{abstract}
We provide a complete characterization of the equivariant commutative ring structures of all the factors in the idempotent splitting
of the $G$-equivariant sphere spectrum, including their Hill-Hopkins-Ravenel norms, where $G$ is any finite group. Our results
describe explicitly how these structures depend on the subgroup lattice and conjugation in $G$. Algebraically, our analysis
characterizes the multiplicative transfers on the localization of the Burnside ring of $G$ at any idempotent element, which is of
independent interest to group theorists. As an application, we obtain an explicit description of the incomplete sets of norm
functors which are present in the idempotent splitting of the equivariant stable homotopy category.
\end{abstract}
\nodetails{}{\tableofcontents}

\section{Introduction}
Let $G$ be a finite group and recall that the zeroth $G$-equivariant homotopy group $\pi_0^G(\sphere)$ of the $G$-sphere spectrum
identifies with the Burnside ring $A(G)$ \cite{segal:ESHT}.
Dress's classification \cite{dress:solvable} of the primitive idempotent elements $e_L \in A(G)$ in terms of perfect subgroups $L\leq
G$ gives rise to a splitting of $G$-spectra
\numberwithin{equation}{section}
\begin{equation} \label{splitting}
\sphere \simeq \prod_{(L) \leq G} \Loc{\sphere}{e_L}
\end{equation}
where the localization $\Loc{\sphere}{e_L}$ is the sequential homotopy colimit
\[ \hocolim ( \sphere \stackrel{e_L}{\longrightarrow} \sphere \stackrel{e_L}{\longrightarrow} \ldots ) \]
along countably infinitely many copies of (a representative of) $e_L$.
The present paper investigates the multiplicative nature of this splitting. (We warn the reader that the
induced splitting on $G$-fixed points is not the tom Dieck splitting; see Remark~\ref{rem tom Dieck}.)

The sphere is a commutative monoid in any good symmetric monoidal category of $G$-spectra
and hence admits the structure of a $G$-$E_\infty$ \emph{ring spectrum},
i.e., it comes equipped with a full set of
\emph{Hill-Hopkins-Ravenel norm maps}
\[ N_K^H \colon \bigwedge_{H/K} \Res_K \sphere \to \Res_H \sphere \]
for all $K \leq H \leq G$. These are equivariantly commutative multiplication maps which feature prominently in the solution
to the Kervaire invariant problem \cite{HHR}. The resulting norms on homotopy groups first appeared in \cite{GM:MU-modules}.
They are multiplicative transfer maps
\[ N_K^H \colon \pi_0^K(\sphere) \cong A(K) \to A(H) \cong \pi_0^H(\sphere) \]
which equip $\upi_0(\sphere) \cong A(-)$ with the structure of a \emph{Tambara functor} \cite{tambara} (and agree with the
multiplicative transfers of $A(-)$ induced by co-induction of finite $G$-sets, see Section~\ref{section:splitting}).

It is known that norm maps behave badly with respect to Bousfield localization of spectra and levelwise localization of Tambara
functors, see Example~\ref{example hill}.
Thus, it is natural to ask about the equivariant multiplicative behavior of the idempotent splitting~(\ref{splitting}).
Throughout the paper, we will decorate the norms of a localization with a tilde to distinguish them from the norms of
the original object.

\setcounter{definition}{1}
\begin{question}[Main question, homotopy-theoretic formulation] \label{main Q htpy}
For which nested subgroups $K \leq H \leq G$ does the norm map $N_K^H$ of $\sphere$ descend to a norm map
\[ \tilde{N}_K^H \colon \bigwedge_{H/K} \Res_K \Loc{\sphere}{e_L} \to \Res_H \Loc{\sphere}{e_L} \]
on the idempotent localization $\Loc{\sphere}{e_L}$, and which norms are preserved by the idempotent splitting (\ref{splitting})?
\end{question}

\begin{question}[Main question, algebraic formulation] \label{main Q alg}
For which nested subgroups $K \leq H \leq G$ does the Green ring
$\underline{\pi}_0\Loc{\sphere}{e_L} \cong \Loc{\Abl}{e_L}$ inherit a norm map $\tilde{N}_K^H$ from that of $A(-)$, and which norms
are preserved by the idempotent splitting
\[ \Abl \cong \prod_{(L) \leq G} \Loc{\Abl}{e_L}? \]
\end{question}

We now state our main results which provide an explicit and exhaustive answer to both questions.
All of our results hold locally for any collection of primes inverted. For simplicity, we only include
the integral statements in the introduction.

\subsection{Statement of algebraic results}
The following result will be restated as Theorem~\ref{local Thm A}, including the local variants.

\begin{result} \label{Thm A}
Let $L \leq G$ be a perfect subgroup and let $e_L \in A(G)$ be the corresponding primitive
idempotent given by Dress's classification of idempotents in $A(G)$ (see Theorem~\ref{Dress
idempotents}). Fix subgroups $K \leq H \leq G$. Then
the norm map $N_K^H \colon A(K) \to A(H)$ descends to a well-defined map of multiplicative monoids
\[ \tilde{N}_K^H \colon \Loc{A(K)}{e_L} \to \Loc{A(H)}{e_L} \]
if and only if the following holds:
\begin{enumerate}
  \item[($\star$)] Whenever $L' \leq H$ is conjugate in $G$ to $L$, then $L'$ is contained in $K$.
\end{enumerate}
\end{result}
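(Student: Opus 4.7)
My strategy is to reformulate descent in terms of the ring of marks and then read off the condition from the double-coset formula for norms. By Dress (Theorem~\ref{Dress idempotents}), $(e_L)^{H'} = 1$ precisely when $\Osolv{H'}$ is $G$-conjugate to $L$, so the injective mark homomorphism identifies $\Loc{A(H)}{e_L}$ with the summand $\Res^G_H e_L \cdot A(H) \subseteq A(H)$, and similarly over $K$. Hence $\tilde N_K^H$ exists if and only if whenever $x \cdot \Res^G_K e_L = y \cdot \Res^G_K e_L$ in $A(K)$, one has $N_K^H(x) \cdot \Res^G_H e_L = N_K^H(y) \cdot \Res^G_H e_L$ in $A(H)$. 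Combined with the double-coset formula
\[
|N_K^H(x)^{H'}| = \prod_{[h] \in H' \backslash H / K} |x^{h^{-1} H' h \cap K}|,
\]
this descent condition becomes equivalent to the following subgroup-theoretic statement: for every $H' \leq H$ with $\Osolv{H'} \sim_G L$ and every $h \in H$, one has $\Osolv{h^{-1} H' h \cap K} \sim_G L$.

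For sufficiency, assume ($\star$) and set $L'' := \Osolv{H'}$, a perfect subgroup of $H$ that is $G$-conjugate to $L$. Then ($\star$) yields $L'' \leq K$, and likewise $h^{-1} L'' h \leq K$ for each $h \in H$ (since $h^{-1} L'' h$ still sits in $H$ and is still $G$-conjugate to $L$). Since $h^{-1} L'' h = \Osolv{h^{-1} H' h}$ is the perfect radical of the ambient $h^{-1} H' h$, it dominates every perfect subgroup of the intersection $h^{-1} H' h \cap K$; being itself a perfect subgroup of that intersection, it coincides with $\Osolv{h^{-1} H' h \cap K}$, which is $G$-conjugate to $L$ as required.

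For necessity, I would test descent on $x = 1$ and $y = \Res^G_K e_L$: these agree after projection to $\Loc{A(K)}{e_L}$, so descent forces $N_K^H(\Res^G_K e_L) \cdot \Res^G_H e_L = \Res^G_H e_L$, which on marks is exactly the subgroup-theoretic statement above. If ($\star$) fails with witness $L' \leq H$ (perfect, $G$-conjugate to $L$, but not contained in $K$), then $L' \cap K$ is a proper subgroup of $L'$ and thus too small to contain any $G$-conjugate of the perfect group $L$, so $\Osolv{L' \cap K} \not\sim_G L$; specializing to $H' = L'$ and $[h] = [e]$ contradicts the condition. The main technical point is the clean identification $\Osolv{h^{-1} H' h \cap K} = h^{-1} \Osolv{H'} h$ used in the sufficiency step, which crucially needs ($\star$) to place the perfect radical inside $K$; the remaining mark manipulations are routine and adapt transparently to the primewise-localized variants.
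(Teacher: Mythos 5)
Your proposal is correct and follows the same broad strategy as the paper's: reduce well-definedness of $\tilde{N}_K^H$ to the divisibility relation $N_K^H(R_K(e_L)) \mid R_H(e_L)$, reformulate via the injective mark homomorphism and the multiplicative double-coset formula (cf.\ Corollary~\ref{marks of a norm formula}), and then translate the resulting combinatorial condition on $\fp$-residual subgroups into the statement ($\star$). Your test with $x=1$, $y=R^G_K(e_L)$ to extract the divisibility criterion, together with the observation that the universal test case implies the general one via multiplicativity of $N$, is essentially the content of (\ref{formula N tilde}) in the paper's proof of Theorem~\ref{thm alg preservation}.

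Where you differ usefully is in the group-theoretic translation, which the paper carries out in Proposition~\ref{prop technical heart} in three steps: a monotonicity argument reducing from arbitrary $Q \leq H$ with $\Opi{Q} \sim_G L$ to minimal such $Q$ (i.e.\ $G$-conjugates of $L$ inside $H$), a conjugation rewrite, and then a direct analysis of $\Opi{L' \cap K}$. Your sufficiency argument instead treats arbitrary $H'$ at once by invoking the characterization of $\Opi{A}$ as the unique maximal $\fp$-perfect subgroup of $A$: since ($\star$) forces $h^{-1}\Opi{H'}h \leq K$, and $h^{-1}\Opi{H'}h = \Opi{h^{-1}H'h}$ already dominates every $\fp$-perfect subgroup of $h^{-1}H'h \cap K$ while itself lying in that intersection, you get $\Opi{h^{-1}H'h \cap K} = h^{-1}\Opi{H'}h$ directly. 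This collapses the paper's Steps~1 and~3 into a single clean lemma. (Your phrasing uses "perfect"; for the $\fp$-local statement you should say "$\fp$-perfect", and the maximality fact follows from part~a) of Lemma~\ref{basic facts residual subgroup} applied to a $\fp$-perfect $P \leq A$.) Your necessity argument via $H'=L'$, $h=e$ and a cardinality count is the same as the "if" half of the paper's Step~3. Both approaches rest on the same facts about $\Opi{\cdot}$; yours is slightly shorter, the paper's is perhaps more transparent about how each hypothesis is used.
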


Theorem \ref{Thm A} builds on previous work by Hill-Hopkins \cite{HH:EqvarMultClosure} and Blumberg-Hill
\cite[Section~5.4]{BH:ITF} which reduced the question to understanding certain division relations
between norms and restrictions of the elements $e_L \in \pi_0^G(\sphere)$, but did not make
explicit the relationship with the subgroup structure of $G$.
The proof of Theorem A is entirely algebraic
and can be found in Section \ref{subsection:proof}.

We now record some immediate consequences of Theorem A that will be restated as Corollary~\ref{local cor alg normal} and
Corollary~\ref{local cor alg trivial}, respectively.

\setcounter{rescor}{1}
\begin{rescor} \label{cor alg normal}
Let $L \leq G$ be perfect. Then $L$ is normal in $G$ if and only if the summand $\Loc{\Abl}{e_L}$ inherits all
norms of the form $\tilde{N}_K^H$ such that $K$ contains a subgroup conjugate in $G$ to $L$.
\end{rescor}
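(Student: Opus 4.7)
The plan is to derive the corollary directly from Theorem \ref{Thm A}, which already characterizes when $\tilde{N}_K^H$ descends to $\Loc{\Abl}{e_L}$ in terms of the subgroup-theoretic condition ($\star$). Thus the corollary is equivalent to the purely group-theoretic claim: $L$ is normal in $G$ \emph{if and only if} condition ($\star$) holds for every pair $K \leq H \leq G$ in which $K$ contains some $G$-conjugate of $L$.

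For the forward implication, I would assume $L$ is normal in $G$, so that the only $G$-conjugate of $L$ is $L$ itself. Then the hypothesis ``$K$ contains a conjugate of $L$'' becomes simply $L \leq K$, and any $L' \leq H$ that is $G$-conjugate to $L$ must equal $L$ and hence lies in $K$. Condition ($\star$) is therefore trivial, and Theorem \ref{Thm A} supplies the norm $\tilde{N}_K^H$.

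For the reverse implication, the strategy is to extract normality from a single well-chosen instance of the hypothesis. I would take $K = L$ and $H = G$: the subgroup $K = L$ trivially contains the $G$-conjugate $L$ of itself, so by hypothesis $\tilde{N}_L^G$ exists. Theorem \ref{Thm A} then asserts that every $L' \leq G$ conjugate to $L$ satisfies $L' \leq L$. Since conjugate subgroups have the same order, this forces $L' = L$, proving that $L$ is normal in $G$.

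The argument is essentially mechanical once Theorem \ref{Thm A} is in hand, so I do not expect a substantive obstacle. The only point requiring any insight is the choice of the test triple $(L, G, L)$ in the reverse direction, which is picked precisely so that the inclusion demanded by ($\star$) combined with the equality of orders of conjugate subgroups collapses to equality of subgroups.
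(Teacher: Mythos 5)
Your proof is correct and follows essentially the same approach as the paper: both directions are derived directly from Theorem~\ref{Thm A}, with the reverse implication using the same test case $K = L$, $H = G$ to conclude that every $G$-conjugate of $L$ is contained in (hence, by order, equal to) $L$.
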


\begin{rescor} \label{cor alg trivial}
The Green ring $\Loc{A(-)}{e_L}$ inherits all norms $\tilde{N}_K^H$ for all $K \leq H$ if and only if $L=1$ is
the trivial group. In this case, the norm maps equip $\Loc{A(-)}{e_L}$ with the structure of a Tambara functor.
\end{rescor}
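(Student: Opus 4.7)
The plan is to derive Corollary~B as a direct specialization of Theorem~A, together with a short check that the inherited operations satisfy Tambara's axioms.

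For the forward direction, I would apply the criterion $(\star)$ in its most restrictive instance: assuming that every norm $\tilde N_K^H$ descends, the existence of $\tilde N_1^L$ in particular forces $(\star)$ with $K=1$ and $H=L$, so every $G$-conjugate of $L$ contained in $L$ must lie in the trivial subgroup. Taking the conjugate $L$ itself, which obviously sits inside $L$, this gives $L = 1$. Conversely, if $L = 1$, then the only $G$-conjugate of $L$ is $1$, which belongs to every $K \leq H$, so $(\star)$ is satisfied vacuously for every pair $(K,H)$, and Theorem~A delivers all of the norm maps.

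For the Tambara assertion, I would note that localization at the idempotent $e_1$ amounts, levelwise, to projection onto the direct summand $e_1 \cdot A(H)$, so the localization map $A(H) \twoheadrightarrow \Loc{A(H)}{e_1}$ is a surjective ring homomorphism. Restrictions and additive transfers of $A(-)$ descend to $\Loc{A(-)}{e_1}$ automatically as part of the ambient Green functor structure, and the previous paragraph supplies all multiplicative norms. The Tambara axioms---the Mackey double-coset formulas and the norm-transfer distributivity law of \cite{tambara}---are polynomial identities among these operations that already hold in $A(-)$, and since the levelwise surjections intertwine all operations, every such identity descends to $\Loc{A(-)}{e_1}$.

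The main content is already absorbed in Theorem~A; the only place where one might worry about the passage to the localization is the Tambara check, and this is unproblematic precisely because localization at an idempotent is a quotient, so no denominators appear and every polynomial identity passes through verbatim.
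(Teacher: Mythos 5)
Your proof is correct and follows essentially the same route as the paper: both directions are immediate specializations of Theorem~A (you set $K=1$, $H=L$ where the paper uses $K=1$, $H=G$; either works), and the paper likewise invokes the existing norm machinery for the Tambara claim. The one small stylistic difference is in closing the Tambara verification: the paper cites the recognition theorem of Blumberg--Hill (their Thm.~4.13, via the proof of Theorem~\ref{thm alg preservation}), which says a Green ring with norms satisfying the reciprocity relations is a Tambara functor, whereas you argue more directly that the levelwise idempotent projections are surjections intertwining all operations, so every equational axiom descends; that argument is sound, but to make it fully rigorous one should still appeal to such a recognition criterion (or Tambara's original axioms being equational) rather than leaving ``polynomial identities'' informal.
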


For an arbitrary perfect subgroup $L \leq G$, we explain how the levelwise localization $\Loc{\Abl}{e_L}$
fits into Blumberg-Hill's framework of \emph{incomplete Tambara functors} \cite{BH:ITF}, the basics of which we recall in Section
\ref{subsection:Tambara}.
For $K \leq H \leq G$, call the $H$-set $H/K$ \emph{admissible} for $e_L$ if $K \leq H$ satisfy
the condition ($\star$) of Theorem A. Call a finite $H$-set admissible if all of its orbits are admissible.
Theorem~\ref{Thm A} is complemented by the following two structural results.

\setcounter{result}{3}
\begin{result}[see Theorem~\ref{local Thm D}] \label{Thm D}
Let $L \leq G$ be a perfect subgroup and let $e_L \in \AG$ be the corresponding primitive idempotent. Then the following hold:
\begin{enumerate}[i)]
  \item The admissible sets assemble into an indexing system $\cI_L$ (in the sense of \cite[Def.~1.2]{BH:ITF}, see Section
  \ref{subsection:indsys}) such that $\Loc{\Abl}{e_L}$ is an $\cI_L$-Tambara functor under $A(-)$.
  \item In the poset of indexing systems, $\cI_L$ is maximal among the elements that satisfy i).
  \item The map $\Abl \to \Loc{\Abl}{e_L}$ is a localization at $e_L$ in the category of $\cI_L$-Tambara functors.
\end{enumerate}
\end{result}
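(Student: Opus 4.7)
The plan is to prove the three parts sequentially, with Theorem~A supplying the core input at each stage. For (i), I first verify the indexing-system axioms of \cite{BH:ITF} for $\cI_L$: closure under restriction to subgroups, passage to sub-$H$-sets, disjoint unions, Cartesian products, and self-induction. All of these reduce to a single elementary observation, namely that for any $g$ lying in a subgroup $H \leq G$, conjugation by $g$ maps $G$-conjugates of $L$ contained in $H$ to $G$-conjugates of $L$ contained in $H$. For instance, in the orbit decomposition $H/K \times H/J \cong \coprod_g H/(K \cap gJg^{-1})$ with $g$ running over $K \backslash H / J$, if $L' \leq H$ is a $G$-conjugate of $L$ then admissibility of $H/K$ gives $L' \leq K$, and applying admissibility of $H/J$ to the $G$-conjugate $g^{-1}L'g \leq H$ gives $L' \leq gJg^{-1}$. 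The restriction formula $\Res_{H'}(H/K) = \coprod_g H'/(H' \cap gKg^{-1})$ and the self-induction statement are entirely analogous, while closure under sub-objects is immediate from $K \leq K' \implies (\star \text{ for } K) \implies (\star \text{ for } K')$. Once $\cI_L$ is known to be an indexing system, the "if" direction of Theorem~A supplies the norm $\tilde N_K^H$ for every admissible orbit, and the $\cI_L$-Tambara axioms descend from $A(-)$ because the localization map is levelwise a ring homomorphism commuting with transfers and admissible norms.

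For (ii), suppose $\cI'$ is an indexing system for which $\Loc{\Abl}{e_L}$ is an $\cI'$-Tambara functor under $A(-)$. For each orbit $H/K \in \cI'(H)$, the norm $N_K^H$ of $A(-)$ must descend to a multiplicative map $\Loc{A(K)}{e_L} \to \Loc{A(H)}{e_L}$; the "only if" direction of Theorem~A forces condition $(\star)$, so $H/K \in \cI_L$ and hence $\cI' \subseteq \cI_L$.

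For (iii), the strategy is a levelwise application of the universal property of commutative ring localization. Given any $\cI_L$-Tambara functor $R$ under $A(-)$ in which the image of $e_L \in A(G)$ becomes a unit in $R(G)$, idempotency forces that image to equal $1$, and restriction then makes $e_L$ act as $1$ in every $R(H)$. So at each $H$ there is a unique ring map $\Loc{A(H)}{e_L} \to R(H)$ extending $A(H) \to R(H)$. Compatibility with restrictions is just functoriality of ring localization; compatibility with additive transfers and with admissible norms follows from the identification $\Loc{A(H)}{e_L} = e_L \cdot A(H)$ arising from idempotency, since every element of the localization is the image of some $a \in A(H)$, so the required diagrams reduce to Mackey/Tambara identities already valid for the structure map $A(-) \to R$.

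I expect the main obstacle to be the indexing-system verification in part (i), particularly the bookkeeping between $G$-conjugacy of $L$ and the double-coset decompositions arising in restrictions and products. Parts (ii) and (iii), while requiring care, are essentially formal once Theorem~A and the Blumberg-Hill framework are in hand.
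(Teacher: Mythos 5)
Your proposal is correct and follows essentially the same strategy as the paper, with a few differences in packaging worth flagging. The paper frames the verification in terms of the category $D_L$ of admissible maps, checking that it is wide, pullback-stable, and finite coproduct-complete (the three lemmas preceding Proposition~\ref{D_L comes from I_L}), and then invokes the isomorphism of posets in Theorem~\ref{iso IS D} to obtain $\cI_L$; you instead verify the axioms of Definition~\ref{def indexing system} directly on the indexing-system side. These verifications are genuinely the same computation — in both cases everything reduces via double-coset decompositions to the observation that condition~($\star$) is preserved under intersection and $H$-conjugation — but the paper's route has the advantage that the reduction to orbits is isolated once and for all in Proposition~\ref{prop projections}, which the paper then reuses in parts~(ii) and~(iii), whereas you redo a version of that reduction in each place. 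For part~(iii), the paper simply cites Theorem~\ref{thm alg preservation}, whose proof (given earlier in the paper) contains precisely the universal-property argument you unwind from scratch; your direct argument is fine, and the observation that the idempotent $e_L$ forces $A(H) \to \Loc{A(H)}{e_L}$ to be surjective — so that all Mackey/Tambara identities lift to the source — is the right key point.

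One small imprecision: you justify ``closure under sub-objects'' by the implication $K \leq K' \Rightarrow (\star \text{ for } K) \Rightarrow (\star \text{ for } K')$, but that implication actually shows admissibility passes to \emph{quotient} orbits $H/K'$ (equivalently, composability of norm maps), not to sub-$H$-sets. Closure under sub-$H$-sets is true for a different (trivial) reason: a sub-$H$-set of $\coprod_i H/K_i$ is a sub-coproduct, and disjoint unions of admissible orbits are admissible by definition. Both facts are needed — closure under sub-objects for the finite-limits axiom, the quotient implication implicitly for closure of $D_L$ under composition — so no gap results, but the two should not be conflated.
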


\setcounter{rescor}{4}
\begin{rescor}[see Corollary~\ref{local cor alg splitting}] \label{cor alg splitting}
The localization maps $\Abl \to \Loc{\Abl}{e_L}$ assemble into a canonical isomorphism of $\cI$-Tambara functors 
\[ \Abl \to \prod_{(L) \leq G \; \mathrm{perfect}} \Loc{\Abl}{e_L}, \]
where $\cI$ is the intersection
\[ \cI = \bigcap_{(L) \leq G} \cI_L \]
of the indexing systems given by Theorem~\ref{Thm D}.
\end{rescor}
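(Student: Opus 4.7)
The plan is to reduce the statement to two ingredients: a levelwise ring isomorphism coming from Dress's classification, and a general fact that maps of $\cI$-Tambara functors are isomorphisms as soon as they are levelwise ring isomorphisms. The hard work (assembling the norm structure) has already been done in Theorem~\ref{Thm D}, so what remains is essentially a bookkeeping argument.

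First, I would verify the levelwise ring splitting. By Dress's classification, the $e_L$ for $G$-conjugacy classes of perfect $L\leq G$ form a complete set of orthogonal primitive idempotents in $A(G)$, in particular $\sum_{(L)} e_L = 1$. Since restriction $\Res^G_H\colon A(G)\to A(H)$ is a ring map, the elements $\Res^G_H(e_L)$ are pairwise orthogonal idempotents in $A(H)$ summing to $1$ (though no longer primitive in general). Standard commutative algebra then identifies the canonical map
\[ A(H)\;\longrightarrow\;\prod_{(L)\leq G\text{ perfect}} \Loc{A(H)}{e_L} \]
with the splitting $A(H)\cong \prod_{(L)} \Res^G_H(e_L)\cdot A(H)$, hence it is an isomorphism of rings. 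This argument is natural in $H$ with respect to restrictions.

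Next, I would upgrade this to a morphism of $\cI$-Tambara functors. By Theorem~\ref{Thm D}(i), each localization $\Loc{\Abl}{e_L}$ is an $\cI_L$-Tambara functor under $A(-)$, and since $\cI \subseteq \cI_L$ by definition, it is in particular an $\cI$-Tambara functor under $A(-)$. The category of $\cI$-Tambara functors has products (formed by taking the levelwise product of rings and the product of all structure maps, which is well-defined precisely because a product of ring homomorphisms is again a ring homomorphism and the norms are polynomial in nature), so $\prod_{(L)} \Loc{\Abl}{e_L}$ is an $\cI$-Tambara functor. By the universal property of the product, the collection of localization maps $\Abl\to \Loc{\Abl}{e_L}$ assembles into a single map of $\cI$-Tambara functors $\Abl \to \prod_{(L)} \Loc{\Abl}{e_L}$.

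Finally, I would invoke the fact that a morphism of $\cI$-Tambara functors is an isomorphism if and only if it is a levelwise isomorphism of the underlying rings: any levelwise inverse automatically commutes with restrictions, additive transfers, and admissible norms, since each of these structure maps can be recovered from its target value by applying the (levelwise) inverse, and the admissible-norm compatibility follows from the fact that the inverse of a coordinatewise-applied ring map on a tensor power is again a coordinatewise-applied ring map. Combined with the first paragraph, this yields the desired isomorphism. The only place that requires genuine care is the step of assembling the norms into the product, i.e., verifying that the universal property of the product in $\cI$-Tambara functors applies in this equivariant setting; this I would expect to be the main (mild) technical obstacle, handled by an explicit verification that the admissible norm $\tilde N^H_K$ on the product is the product of the norms on each factor, which holds by naturality of Theorem~\ref{Thm D}(iii) under the ring projection maps.
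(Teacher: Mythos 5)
Your proof takes essentially the same route as the paper's: reduce to the levelwise ring splitting coming from Dress's orthogonal idempotents (the paper's Proposition~\ref{Dress splitting}), note that each factor carries an $\cI$-Tambara structure since $\cI\subseteq\cI_L$ via Theorem~\ref{Thm D}, and use that products of $\cI$-Tambara functors are computed levelwise. The paper simply cites \cite[Prop.~10.1]{strickland:tambara} for the levelwise-product fact rather than sketching its verification, but the substance is identical.
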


Together, Theorem~\ref{Thm A}, Theorem~\ref{Thm D} and Corollary~\ref{cor alg splitting} answer Question~\ref{main Q alg}. A simple
characterization of the norms parametrized by $\cI$ can be found in Lemma~\ref{admissible sets of I}.

\nodetails{}{\redfont{LATER: Does this hold more generally? Try to prove: Inverting elements in an ITF structured by an indexing system
always yields an ITF structured by some (potentially smaller) indexing systems.}}

\subsection{Statement of homotopical results}
It was conjectured by Blumberg-Hill \cite[Section~5.2]{BH:OperMult} and proven in
\cite{GW, rubin:realization, bonventre-pereira} that any indexing system can be realized by an
$N_\infty$ operad which encodes norms precisely for the admissible sets of that indexing system. In particular, for any of the indexing systems $\cI_L$ of Theorem \ref{Thm D}, we can
choose a corresponding $\Sigma$-cofibrant $N_\infty$ operad $\cO_L$. See Section \ref{subsection:poset} for details.

We use general preservation results for $N_\infty$ algebras under localization \cite{HH:EqvarMultClosure,
GW} to lift our algebraic results about $\cI_L$-Tambara functor structures on homotopy groups to a
homotopical statement about $\cO_L$-algebra structures on $G$-spectra. The following result is restated as
Corollary \ref{local cor htpy}.

\setcounter{rescor}{5}
\begin{rescor} \label{cor htpy}
For $L \leq G$ $\fp$-perfect, let $\cO_L$ be any $\Sigma$-cofibrant $N_\infty$ operad whose associated
indexing system is $\cI_L$.
Then:
\begin{enumerate}[i)]
  \item The $G$-spectrum $\Loc{\sphere}{e_L}$ is an $\cO_L$-algebra under $\sphere$.
  \item In the poset of homotopy types of $N_\infty$ operads, $\cO_L$ is maximal among the elements that satisfy i).
  \item The map $\sphere \to \Loc{\sphere}{e_L}$ is a localization at $e_L$ in the category of $\cO_L$-algebras.
\end{enumerate}
\end{rescor}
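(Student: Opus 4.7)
The plan is to reduce each clause of the corollary to its algebraic counterpart in Theorem~\ref{Thm D} via the general preservation machinery for $N_\infty$-operads under Bousfield localization developed by Hill-Hopkins \cite{HH:EqvarMultClosure} and Guti\'errez-White \cite{GW}. The key feature of that machinery is that a Bousfield localization $L_f$ preserves the $\cO$-algebra structure on a $G$-spectrum precisely when the induced map on $\underline{\pi}_0$ is compatible with the norms prescribed by $\cO$; for a smashing localization at a $\pi_0$-idempotent, this compatibility can be detected entirely on $\underline{\pi}_0$.

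For (i), the map $\sphere \to \Loc{\sphere}{e_L}$ is smashing because $e_L$ lies in $\pi_0^G(\sphere)$, and its effect on $\underline{\pi}_0$ is precisely the levelwise localization $\Abl \to \Loc{\Abl}{e_L}$. By Theorem~\ref{Thm D}(i), the latter is a map of $\cI_L$-Tambara functors, which verifies the algebraic hypothesis of the preservation theorem for any $N_\infty$-operad $\cO_L$ realizing $\cI_L$. Consequently, $\Loc{\sphere}{e_L}$ inherits an $\cO_L$-algebra structure under $\sphere$.

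For (ii), suppose $\cO$ is an $N_\infty$-operad with associated indexing system $\cI_\cO$ such that $\Loc{\sphere}{e_L}$ is an $\cO$-algebra under $\sphere$. Passing to $\underline{\pi}_0$ gives $\Loc{\Abl}{e_L}$ the structure of an $\cI_\cO$-Tambara functor under $\Abl$, so the maximality clause of Theorem~\ref{Thm D}(ii) forces $\cI_\cO \subseteq \cI_L$. Under the realization correspondence between $N_\infty$-operads and indexing systems established in \cite{GW, rubin:realization, bonventre-pereira}, this translates into $\cO \leq \cO_L$ in the poset of homotopy types, proving maximality.

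For (iii), the $\cO_L$-algebra version of the universal property becomes formal once (i) is in place: any $\cO_L$-algebra map $\sphere \to R$ in which multiplication by $e_L$ is invertible factors uniquely through $\sphere \to \Loc{\sphere}{e_L}$ as a map of underlying $G$-spectra, and the preservation result of (i) forces this factorization to be one of $\cO_L$-algebras. The main obstacle is to keep the homotopical/algebraic translation rigorous in the \emph{incomplete} setting, since $\cO_L$ typically encodes strictly fewer norms than a $G$-$E_\infty$-operad and one must verify that the cited preservation and localization results apply verbatim; however, both \cite{HH:EqvarMultClosure} and \cite{GW} are already written at this level of generality, so once Theorems~\ref{Thm A} and~\ref{Thm D} are in hand, the corollary follows by a direct transcription of the algebraic statements into the homotopical setting.
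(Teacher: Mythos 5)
Your proposal follows essentially the same route as the paper: deduce the homotopical statements from the algebraic ones in Theorem~\ref{Thm D} via the Gutierrez--White/Hill--Hopkins preservation machinery for $N_\infty$-algebras under monoidal Bousfield localization, and establish maximality (ii) by passing to $\underline{\pi}_0$ via Theorem~\ref{Brun relative} and appealing to Theorem~\ref{Thm D}(ii). The one place where your phrasing is slightly imprecise is in part (i): the hypothesis that the preservation result (Proposition~\ref{prop translation White HH}) actually checks is the divisibility relation that $N_K^H R^G_K(e_L)$ divides a power of $R^G_H(e_L)$ for every admissible $H/K$; the paper extracts this from Theorem~\ref{Thm D}(iii) together with Theorem~\ref{thm alg preservation}, whereas you invoke Theorem~\ref{Thm D}(i) and gloss the translation to the divisibility condition. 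These are equivalent in context via Theorem~\ref{thm alg preservation}, so no real gap is created, but a cleaner argument should cite that equivalence explicitly rather than assert that ``compatibility on $\underline{\pi}_0$'' is the hypothesis of the preservation theorem. Your treatment of (iii) is also more informal than the paper's (which gets (iii) directly from the definition of ``preserves'' in Definition~\ref{def preservation}), but the conclusion and the line of reasoning match.
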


A homotopical reformulation of Corollary \ref{cor alg
trivial} shows that the idempotent splitting of $\sphere$ is far from being a splitting of $G$-$E_\infty$ ring spectra.

\setcounter{rescor}{6}
\begin{rescor}[see Corollary~\ref{local cor htpy trivial}] \label{cor htpy trivial}
The $G$-spectrum $\Loc{\sphere}{e_L}$ is a $G$-$E_\infty$ ring spectrum if and only if $L=1$ is the trivial group.
\end{rescor}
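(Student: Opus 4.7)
The statement is an immediate homotopical repackaging of the algebraic Corollary~\ref{cor alg trivial} via Corollary~\ref{cor htpy}, so the plan is to reduce the question about $G$-$E_\infty$ structures to a statement about the indexing system $\cI_L$, and then invoke the algebraic result.

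For the ``if'' direction, suppose $L=1$. Then Corollary~\ref{cor alg trivial} says that $\Loc{\Abl}{e_L}$ inherits \emph{every} norm $\tilde{N}_K^H$ for $K\leq H\leq G$. By Theorem~\ref{Thm D}, the associated indexing system $\cI_L$ therefore contains every orbit $H/K$, so $\cI_L$ is the complete indexing system. Hence we may take $\cO_L$ in Corollary~\ref{cor htpy} to be a $G$-$E_\infty$ operad, and part (i) of that corollary gives an $\cO_L$-algebra structure on $\Loc{\sphere}{e_L}$, which is exactly a $G$-$E_\infty$ ring structure.

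For the ``only if'' direction, suppose $\Loc{\sphere}{e_L}$ admits a $G$-$E_\infty$ ring structure under $\sphere$. Let $\cE$ be a $\Sigma$-cofibrant $G$-$E_\infty$ operad, whose associated indexing system is the complete one. Then $\Loc{\sphere}{e_L}$ is an $\cE$-algebra under $\sphere$, and by the maximality statement of Corollary~\ref{cor htpy}(ii), $\cE$ must sit below $\cO_L$ in the poset of homotopy types of $N_\infty$ operads. Consequently the indexing system $\cI_L$ of $\cO_L$ must be the complete indexing system as well, meaning that every orbit $H/K$ is admissible for $e_L$. By Theorem~\ref{Thm D}(i), $\Loc{\Abl}{e_L}$ then inherits all norms $\tilde{N}_K^H$, so Corollary~\ref{cor alg trivial} forces $L=1$.

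The main conceptual step, rather than obstacle, is the use of maximality in Corollary~\ref{cor htpy}(ii) to translate the abstract existence of a $G$-$E_\infty$ structure on $\Loc{\sphere}{e_L}$ into an inclusion of indexing systems; once this is in place, everything reduces to the algebraic classification already obtained. One should check that the $G$-$E_\infty$ structure appearing in the statement is automatically compatible with the unit map from $\sphere$, so that Corollary~\ref{cor htpy}(ii) genuinely applies; this is standard since the localization map $\sphere\to\Loc{\sphere}{e_L}$ is the unit of any multiplicative structure on $\Loc{\sphere}{e_L}$ refining its $\sphere$-module structure.
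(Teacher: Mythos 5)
Your proposal is correct and follows the same route the paper intends: the paper explicitly labels Corollary~\ref{local cor htpy trivial} as ``a homotopical reformulation of Corollary~\ref{local cor alg trivial}'' without spelling it out, and your two-step deduction through Corollary~\ref{cor htpy} (first translating a $G$-$E_\infty$ structure into completeness of the indexing system $\cI_L$ via maximality and Theorem~\ref{Brun relative}, then invoking Corollary~\ref{cor alg trivial}) is precisely the implicit argument. Your closing remark about the unit map being forced to agree with the localization map is the right check to make and is easily verified since $[\sphere, \Loc{\sphere}{e_L}]^G \cong \pi_0^G(\Loc{\sphere}{e_L})$ identifies the unit uniquely with the localization map.
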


Locally at a prime $p$, this recovers a (currently unpublished) result of Grodal
\cite[Cor.~5.5]{grodal:burnside}, which we state as Theorem \ref{thm grodal}.

There is a homotopical analogue of Corollary~\ref{cor alg splitting}.

\setcounter{rescor}{7}
\begin{rescor}[see Corollary~\ref{local cor htpy splitting}] \label{cor htpy splitting}
Let $\cO$ be any $\Sigma$-cofibrant $N_\infty$ operad whose associated indexing system is $\cI$.
Then the idempotent splitting
\[ \sphere \simeq \prod_{(L) \leq G} \Loc{\sphere}{e_L} \]
is an equivalence of $\cO$-algebras, where the product is taken over conjugacy classes of perfect subgroups.
\end{rescor}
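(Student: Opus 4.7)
The approach is to upgrade the underlying equivalence of~(\ref{splitting}) to an equivalence of $\cO$-algebras by assembling the individual localization maps $\sphere \to \Loc{\sphere}{e_L}$ into a single $\cO$-algebra map, then appealing to the fact that weak equivalences of $\cO$-algebras are created by the forgetful functor to $G$-spectra.

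First, I would recall that $\sphere$ carries the structure of a $G$-$E_\infty$ ring spectrum and is therefore an algebra over every $N_\infty$ operad, in particular over each $\cO_L$. By Corollary~\ref{cor htpy}(iii), the unit $\sphere \to \Loc{\sphere}{e_L}$ is an $\cO_L$-algebra localization at $e_L$, hence in particular an $\cO_L$-algebra map. Since $\cI \subseteq \cI_L$ for every perfect $L \leq G$ by construction, the realization theorems for $N_\infty$ operads recalled in Section~\ref{subsection:poset} yield a map of operads $\cO \to \cO_L$ (up to homotopy). Restricting along this map turns each $\Loc{\sphere}{e_L}$ into an $\cO$-algebra under $\sphere$ and each localization map into an $\cO$-algebra map.

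Next, I would form the product $\prod_{(L)} \Loc{\sphere}{e_L}$ in the category of $\cO$-algebras. Because there are only finitely many conjugacy classes of perfect subgroups of $G$, this is a finite product, and the forgetful functor to $G$-spectra preserves it. The universal property of the product then assembles the individual localization maps into a canonical $\cO$-algebra map
\[ \sphere \longrightarrow \prod_{(L) \leq G \; \mathrm{perfect}} \Loc{\sphere}{e_L}, \]
which by construction agrees on underlying $G$-spectra with the splitting map of~(\ref{splitting}). Since weak equivalences of $\cO$-algebras are detected on underlying $G$-spectra, this $\cO$-algebra map is an equivalence, as desired.

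The step I expect to require the most care is the second one: one needs the equivalence between the homotopy poset of $\Sigma$-cofibrant $N_\infty$ operads and the poset of indexing systems to be rigid enough to allow a coherent choice of maps $\cO \to \cO_L$ along which to restrict the various $\cO_L$-algebra structures, and one must verify that these restrictions are compatible with the unit $\sphere$ and with the categorical product. Once such a coherent family of restrictions is fixed, this compatibility is formal, and the remainder of the argument reduces to the splitting~(\ref{splitting}) together with detection of equivalences on underlying $G$-spectra.
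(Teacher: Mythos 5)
Your proposal is correct and follows the same route as the paper: the paper's own proof simply cites Proposition~\ref{htpy Dress splitting} for the underlying equivalence of $G$-spectra and Corollary~\ref{local cor htpy} to see that the localization maps are maps of $\cO$-algebras, which is exactly what you spell out (including the restriction of the $\cO_L$-structure along $\cO \to \cO_L$ coming from $\cI \subseteq \cI_L$ and detection of equivalences on underlying $G$-spectra). Your write-up is simply more explicit about the operad comparison maps and the finite product in $\cO$-algebras.
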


Together, Corollary~\ref{cor htpy} and Corollary~\ref{cor htpy splitting} answer Question \ref{main Q htpy}.

\subsection{Examples}
In Section~\ref{section:examples}, we use our results to explicitly calculate the multiplicative structure of the
idempotent splittings of the sphere in the case of the alternating group $A_5$ and the symmetric group $\Sigma_3$ (working
$3$-locally). Moreover, for arbitrary $G$, we deduce that the rational idempotent splitting of $\sphere_{\Q}$ cannot preserve any
non-trivial norm maps. The latter is not a new insight, cf.~e.g.~\cite[Section~7]{BGK:AlgebraicModelNaive}.

\subsection{Applications to modules}
Corollary~\ref{cor htpy}, together with the theory of modules of \cite{BH:modules-v2}, also characterizes the norm functors which
arise on the level of modules over the $N_\infty$ ring $\Loc{\sphere}{e_L}$ and its restrictions to subgroups.

The following result will be restated as Corollary~\ref{local norms modules}.
\nodetails{}{\redfont{LATER: Say a bit more here?}}

\begin{cor} \label{norms modules}
Let $L \leq G$ be perfect and let $\cO_L$ as in Corollary~\ref{cor htpy}. Assume furthermore that $\cO_L$ has the homotopy type of
the linear isometries operad on some (possibly incomplete) universe $U$.
For all admissible sets $H/K$ of $\cI_L$, there are norm functors
\[ {~}_{\Res_H(\Loc{\sphere}{e_L})} N_{K, \Res_K(U)}^{H, \Res_H(U)} \colon
Mod(\Res^G_K(\Loc{\sphere}{e_L})) \to Mod(\Res^G_H(\Loc{\sphere}{e_L})) \]
built from the smash product relative to $\Loc{\sphere_{(\fp)}}{e_L}$
which satisfy a number of relations analogous to those for the norm functor $\mathrm{Sp}^H \to \SpG$, stated
in \cite[Thm.~1.3]{BH:modules-v2}.
\end{cor}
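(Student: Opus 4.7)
The plan is to reduce the statement to a direct application of Blumberg--Hill's module-level norm construction~\cite{BH:modules-v2}. The essential equivariantly commutative input has already been established: by Corollary~\ref{cor htpy}(i), the localization $\Loc{\sphere}{e_L}$ admits the structure of an $\cO_L$-algebra under $\sphere$, and the hypothesis that $\cO_L$ has the homotopy type of the linear isometries operad $\cL(U)$ on a (possibly incomplete) $G$-universe $U$ places $\Loc{\sphere}{e_L}$ in the setup of \cite{BH:modules-v2} as a commutative ring spectrum indexed on $U$. Restriction along $K \leq H \leq G$ transports this to commutative rings indexed on $\Res_K(U)$ and $\Res_H(U)$, so the module categories $\Mod(\Res^G_K(\Loc{\sphere}{e_L}))$ and $\Mod(\Res^G_H(\Loc{\sphere}{e_L}))$ are of the form to which that machinery applies.

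Given this data, I would invoke \cite[Thm.~1.3]{BH:modules-v2} verbatim. That theorem constructs, for every pair $K \leq H$ such that $H/K$ embeds equivariantly into $\Res_H(U)$, a norm functor ${~}_R N_{K,V}^{H,W}$ between the corresponding module categories, built out of the relative smash product over $R$, and satisfying the stated list of compatibility properties (composition, interaction with restriction and conjugation, symmetric monoidality, and so on). Specialising $R = \Loc{\sphere}{e_L}$ with $V = \Res_K(U)$ and $W = \Res_H(U)$ produces precisely the functors in the statement, and the listed relations transfer without modification, since \cite[Thm.~1.3]{BH:modules-v2} uses no special features of the sphere spectrum.

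The only point that requires verification is the matching of indexing data: one must check that the admissible sets of $\cI_L$ coincide with the finite $H$-sets $H/K$ that embed equivariantly into $\Res_H(U)$. This is immediate from the definition of the indexing system associated to a linear isometries operad together with the hypothesis that $\cO_L \simeq \cL(U)$. Once this identification is in place, the rest of the corollary is formal, so the main (and essentially bookkeeping) obstacle is making sure the admissibility conventions of Theorem~\ref{Thm D} and of \cite{BH:modules-v2} are aligned.
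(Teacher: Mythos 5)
Your proposal is correct and takes essentially the same approach as the paper: the paper's proof of this corollary is a one-line remark that it is ``an immediate application of \cite[Thm.~1.1, Thm.~1.3]{BH:modules-v2} to Corollary~\ref{local cor htpy}.'' You have simply spelled out the bookkeeping (identifying the $\cO_L$-algebra structure from Corollary~\ref{cor htpy}(i), using the linear-isometries hypothesis to place the localized sphere in the setup of \cite{BH:modules-v2}, and matching admissible sets) that the paper leaves implicit.
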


Any $G$-spectrum is a module over $\sphere$, hence the idempotent splitting (\ref{splitting}) of $\sphere$ induces a splitting of
the category of $G$-spectra. Corollary~\ref{norms modules} then says that this does not give rise to a splitting of $G$-symmetric
monoidal categories in the sense of \cite{HH:EqvarSymMon}. Indeed, the categories of modules over (restrictions to subgroups of)
$\Loc{\sphere}{e_L}$ will only admit an incomplete set of norm functors, which then can be read off from Theorem~\ref{Thm A}.

\subsection{Topological $K$-theory spectra}
We will answer the analogues of our main questions for $G$-equivariant complex and real topological $K$-theory
in the sequel \cite{boehme:idempotent-characters_v2}, see Section~\ref{section:applications}.

\textbf{Organization:}
Section \ref{section:background} provides some background material on $N_\infty$ operads and
their algebras in $G$-spectra, (incomplete) Tambara functors, indexing systems and their behavior under localization.
In Section \ref{section:splitting}, we recall Dress's classification of idempotent elements in the Burnside ring and explain how
to obtain the splitting (\ref{splitting}) of the $G$-equivariant sphere spectrum.
We state and prove our results (including the local variants) in Section \ref{section:results} and discuss examples in Section~
\ref{section:examples}.
Finally, applications are discussed in Section \ref{section:applications}.

\textbf{Acknowledgements:} The present work was part of the author's PhD project at the
University of Copenhagen; a previous version of the article was included in his PhD thesis
\cite{boehme:thesis}.
The author would like to thank his advisor Jesper Grodal, his PhD committee consisting of Andrew Blumberg, John Greenlees
and Lars Hesselholt, as well as Markus Hausmann, Mike Hill, Joshua Hunt, David White and an anonymous referee
for many helpful discussions, suggestions and comments.
Special thanks go to Malte Leip for suggesting a formulation of the condition ($\star$) in Theorem~\ref{Thm A}
which is simpler than the one given in a previous version of this paper. \\ This research was supported by the
Danish National Research Foundation through the Centre for Symmetry and Deformation (DNRF92).
\section{Preliminaries} \label{section:background}
We briefly recall some background material on $N_\infty$ operads and $N_\infty$ ring spectra, incomplete Tambara functors and
localizations. Most of this section follows \cite{BH:OperMult, BH:ITF}.

\subsection{$N_\infty$ operads and indexing systems} \label{subsection:operads} \label{subsection:indsys}
Recall that a subgroup $\Gamma \leq G \times \Sigma_n$ is a \emph{graph subgroup} if it is the graph of a group homomorphism $H \to
\Sigma_n$ for some $H \leq G$, or equivalently, if $\Gamma \cap (\{1\} \times \Sigma_n)$ is trivial. By a $G$-\emph{operad} we mean
a symmetric operad in the category of (unbased) $G$-spaces.

\begin{definition}[\cite{BH:OperMult}, Def.~1.1]
A $G$-operad $\mathcal{O}$ is called an $N_\infty$ \emph{operad} if each $G$-space $\mathcal{O}(n)$
is a universal space for a family $\mathcal{F}_n$ of graph subgroups of $G \times \Sigma_n$ which contains
all graphs of trivial homomorphisms, i.e., all subgroups of the form $H \times \{\id\}$.
\end{definition}

The following properties are immediate from the definition.

\begin{lemma} For an $N_\infty$ operad $\mathcal{O}$, the following holds:
\begin{enumerate}[(i)]
  \item The $G$-spaces $\mathcal{O}(0)$ and $\mathcal{O}(1)$ are $G$-equivariantly contractible.
  \item The action of $\Sigma_n$ on $\mathcal{O}(n)$ is free.
  \item The underlying non-equivariant operad is always an $E_\infty$ operad.
\end{enumerate}
\end{lemma}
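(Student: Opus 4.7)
The plan is to unwind the definition of an $N_\infty$ operad and derive each of the three properties directly from the structure of the families $\mathcal{F}_n$.

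For (i), I would observe that for $n=0$ and $n=1$ the symmetric group $\Sigma_n$ is trivial, so every subgroup of $G \times \Sigma_n$ is automatically a graph, and moreover the graph of the trivial homomorphism $H \to \Sigma_n$. By hypothesis, $\mathcal{F}_n$ contains all such graphs, so $\mathcal{F}_n$ must be the family of all subgroups of $G \times \Sigma_n \cong G$. A universal space for the family of all subgroups is characterized by the property that its $H$-fixed points are contractible for every $H \leq G$, which is precisely the definition of a $G$-equivariantly contractible $G$-space.

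For (ii), the key observation is that the defining condition for a graph subgroup $\Gamma \leq G \times \Sigma_n$ is exactly $\Gamma \cap (\{1\} \times \Sigma_n) = \{(1, \mathrm{id})\}$. Since $\mathcal{O}(n)$ is a universal space for $\mathcal{F}_n$, every isotropy subgroup of a point of $\mathcal{O}(n)$ lies in $\mathcal{F}_n$ and hence is a graph subgroup. Therefore no non-identity element of $\{1\} \times \Sigma_n$ fixes any point of $\mathcal{O}(n)$, which means the $\Sigma_n$-action is free.

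For (iii), I would combine (ii) with the observation that the trivial subgroup $\{(1, \mathrm{id})\} \leq G \times \Sigma_n$ is itself a graph (of the trivial homomorphism from the trivial subgroup), so it lies in $\mathcal{F}_n$. Hence the universal space property gives that the fixed points $\mathcal{O}(n)^{\{(1,\mathrm{id})\}} = \mathcal{O}(n)$ are contractible as a non-equivariant space. Together with the freeness of the $\Sigma_n$-action from (ii), this is precisely the definition of an $E_\infty$ operad.

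None of the steps present a real obstacle; the entire lemma is a direct unpacking of definitions, and the only thing to be careful about is the bookkeeping of which subgroups of $G \times \Sigma_n$ qualify as graphs in the degenerate cases $n = 0, 1$ and at the trivial subgroup.
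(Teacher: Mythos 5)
Your proof is correct and is exactly the direct unpacking of the definition of an $N_\infty$ operad that the paper has in mind when it states that these properties are ``immediate from the definition.'' All three steps are sound: for $n\le 1$ the family $\mathcal{F}_n$ is forced to be all subgroups of $G\cong G\times\Sigma_n$, graph subgroups intersect $\{1\}\times\Sigma_n$ trivially so isotropy forces a free $\Sigma_n$-action, and the trivial subgroup always lies in $\mathcal{F}_n$ so each $\mathcal{O}(n)$ is non-equivariantly contractible.
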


\begin{example}[\cite{BH:OperMult}, Lemma~3.15]
Let $U$ be a (not necessarily complete) $G$-universe, and let $\mathcal{L}(U)$ be the associated operad of linear isometric
embeddings. Then it is a $G$-operad under the conjugation action, and it is always an $N_\infty$
operad.
\end{example}

\begin{definition}
An $H$-set $X$ of cardinality $n$ is called \emph{admissible} for $\mathcal{O}$ if the graph of the corresponding action
homomorphism $H \to \Sigma_n$ is contained in $\mathcal{F}_n$.
\end{definition}

Algebras $R$ in $G$-spectra over an $N_\infty$ operad $\mathcal{O}$ are $G$-equivariant $E_\infty$ ring
spectra which in addition admit coherent equivariant multiplications given by Hill-Hopkins-Ravenel norm
maps
\cite[Thm.~6.11]{BH:OperMult}
\[ N_K^H \colon \bigwedge_{H/K} \Res^G_K(R) \to \Res^G_H(R) \]
for those nested subgroups $K \leq H \leq G$ such that $H/K$ is an admissible set for $\mathcal{O}$. (More generally, there is a
norm map $N_f$ associated to a map of $G$-sets $f \colon X \to Y$ provided that for all $y \in Y$, the preimage $f^{-1}(y)$ is an
admissible $G_y$-set, where $G_y$ denotes the stabilizer group of $y$.) Here, $\bigwedge_{H/K}$ denotes the \emph{indexed smash
product} or
\emph{Hill-Hopkins-Ravenel norm functor} \cite[Section~2.2.3]{HHR}, and the maps $N_K^H$ arise as the counits of the adjunctions \cite[Prop.~2.27]{HHR}
\begin{center}
$\xymatrix@M=10pt{
	\bigwedge_{H/K}(-) \colon \mathbf{Comm^K} \ar@<0.7ex>[r] &
	\mathbf{Comm^H} \colon \Res^H_K(-) \ar@<0.7ex>[l]
}$
\end{center}
between categories of commutative monoids in equivariant spectra.

The data of admissible $H$-sets for all $H \leq G$ can be organized in the following way: For fixed $H$, the collection of
admissible $H$-sets forms a symmetric monoidal subcategory of the category $\Set^H$ of finite $H$-sets under disjoint union.
Together, these assemble into a subfunctor $\cI$ of the coefficient system $\underline{\Set}$
whose value at $G/H$ is the symmetric monoidal category $\Set^H$. The operad structure of $\mathcal{O}$ forces $\cI$ to be closed
under certain operations, as captured in the following definition.

\begin{definition}[\cite{BH:ITF}, Def.~1.2] \label{def indexing system}
An \emph{indexing system} is a contravariant functor
\[ \cI \colon \OrbG^{op} \to Sym, \; G/H \mapsto \cI(H) \]
from the orbit category of $G$ to the category of symmetric monoidal categories and strong symmetric monoidal functors, such that
the following holds:
\begin{enumerate}[(i)]
  \item The value $\cI(H)$ of $\cI$ at $G/H$ is a full symmetric monoidal subcategory of the category $\Set^H$ of
  finite $H$-sets and $H$-equivariant maps which contains all trivial $H$-sets.
  \item Each $\cI(H)$ is closed under finite limits.
  \item The functor $\cI$ is closed under ``self-induction'': If $H/K \in \cI(H)$ and $T \in \cI(K)$,
  we require that $\Ind_K^H(T) = H \times_K T \in \cI(H)$.
\end{enumerate}
\end{definition}

The collection of all indexing systems (for a fixed group $G$) forms a poset under inclusion. $N_\infty$ operads give rise to
indexing systems.

\begin{definition}
Let $\cI$ be an indexing system. Call an $H$-set $X$ \emph{admissible} if $X \in \cI(H)$. Call a map $f \colon Y \to Z$ of finite
$G$-sets \emph{admissible} if the orbit $G_{f(y)}/G_y$ obtained from stabilizer subgroups is admissible for all $y \in Y$.
\end{definition}

\begin{prop}[\cite{BH:OperMult}, Thm.~4.17]
The admissible sets of any $N_\infty$ operad $\mathcal{O}$ form an indexing system.
\end{prop}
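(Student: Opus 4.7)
The plan is to follow the proof of Blumberg-Hill, which reduces every closure property to an application of the operadic composition maps combined with the characterization of $\mathcal{O}(n)$ as a universal space for $\mathcal{F}_n$. First, I would reformulate admissibility in fixed-point language: an $H$-set $X$ with action homomorphism $\rho_X \colon H \to \Sigma_{|X|}$ is admissible precisely when the graph $\Gamma_{\rho_X} \leq G \times \Sigma_{|X|}$ lies in $\mathcal{F}_{|X|}$, and since $\mathcal{O}(|X|)$ is a universal $\mathcal{F}_{|X|}$-space this is equivalent to $\mathcal{O}(|X|)^{\Gamma_{\rho_X}}$ being nonempty (hence contractible). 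All subsequent arguments will consist of producing new graph-subgroup fixed points from old ones via the operad's structure maps.

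Next, I would verify the functoriality part of condition (i), namely that conjugation and restriction along subgroup inclusions send admissible sets to admissible sets. Both of these correspond to replacing a graph $\Gamma_{\rho_X}$ by a subgroup of a conjugate, and the defining property that each $\mathcal{F}_n$ is a family of subgroups closed under conjugation and passage to subgroups ensures that the associated fixed-point spaces remain nonempty. That $\cI(H)$ contains all trivial $H$-sets is immediate from the axiom that $H \times \{\id\} \in \mathcal{F}_n$ for all $n$, and the symmetric monoidal (disjoint-union) closure is obtained by composing fixed points in $\mathcal{O}(|X|)^{\Gamma_X}$ and $\mathcal{O}(|Y|)^{\Gamma_Y}$ via the operadic map
\[ \mathcal{O}(2) \times \mathcal{O}(|X|) \times \mathcal{O}(|Y|) \longrightarrow \mathcal{O}(|X|+|Y|), \]
using the trivial-graph fixed point in $\mathcal{O}(2)$; the resulting point has the required stabilizer $\Gamma_{X \sqcup Y}$.

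For condition (ii), closure of $\cI(H)$ under finite limits, it suffices to establish closure under sub-$H$-sets together with finite products. Sub-$H$-sets $X' \subseteq X$ are handled by composing a given fixed point of $\mathcal{O}(|X|)$ with $|X'|$ copies of a chosen point of $\mathcal{O}(1)$ in the positions belonging to $X'$ and with the (unique up to homotopy) $0$-ary operation in the remaining positions; equivariance of $\gamma$ together with $H$-fixedness of the chosen points in $\mathcal{O}(0)$ and $\mathcal{O}(1)$ yields the needed fixed point in $\mathcal{O}(|X'|)$. Binary products, and more generally self-induction (iii), are the technically most delicate step: given $H/K$ admissible and a $K$-admissible set $T$ of cardinality $m$, one iterates the full operadic composition
\[ \mathcal{O}([H\!:\!K]) \times \mathcal{O}(m)^{[H\!:\!K]} \longrightarrow \mathcal{O}([H\!:\!K] \cdot m), \]
carefully tracking how the $H$-action permutes the outer factor according to $\rho_{H/K}$ while each inner $\mathcal{O}(m)$ factor is acted on by the stabilizer $K$ through $\rho_T$. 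A diagram chase identifies the stabilizer of the composite with $\Gamma_{\Ind_K^H T}$, completing the argument.

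I expect the main obstacle to be exactly this last step: producing the self-induction closure requires matching the combinatorics of the wreath-product action $H \wr K$ with the $G \times \Sigma_{[H:K]\cdot m}$-equivariance of the operadic composition map. Once that bookkeeping is carried out, all the other axioms follow from routine invocations of the same principle, and the proposition reduces to verifying that the fixed-point subspaces produced by the structure maps have the claimed stabilizers.
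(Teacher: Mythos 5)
The paper does not give its own proof of this proposition: it is quoted verbatim from Blumberg--Hill \cite[Thm.~4.17]{BH:OperMult}, so there is nothing internal to compare your argument against. Your sketch follows the same route as that source — identify admissibility of an $H$-set $X$ with non-emptiness of the fixed-point space $\mathcal{O}(|X|)^{\Gamma_{\rho_X}}$, then manufacture new fixed points through the operad's structure maps, using that each $\mathcal{F}_n$ is a genuine family (closed under conjugation and passage to subgroups) and that $\mathcal{O}(0)$, $\mathcal{O}(1)$ have $G$-fixed points. The steps you outline all go through: the $\mathcal{O}(2)$-composition gives closure under disjoint union, plugging $\mathcal{O}(1)$ or $\mathcal{O}(0)$ into the slots of $X'\subseteq X$ gives closure under sub-$H$-sets (and hence under equalizers), and the observation that $(H/K)\times Y \cong \mathrm{Ind}_K^H\mathrm{Res}_K^H Y$ correctly reduces binary products to self-induction together with restriction and disjoint union. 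The one place you leave genuinely sketchy — the wreath-product bookkeeping in the self-induction step, matching the $\Sigma_n \wr \Sigma_m$-equivariance of $\gamma\colon \mathcal{O}(n)\times\mathcal{O}(m)^n\to\mathcal{O}(nm)$ against the stabilizer $\Gamma_{\rho_{\mathrm{Ind}_K^H T}}$ — is, as you say, exactly where the substance of the Blumberg--Hill argument lies, and it does work out once one observes that the fixed points of $\mathcal{O}(m)^n$ under the twisted diagonal $\tilde\Gamma$-action reduce to $\mathcal{O}(m)^{\Gamma_{\rho_T}}$ over the chosen coset; this is the point that deserves to be written out in full if the proof were to be included.
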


\subsection{The poset of $N_\infty$ ring structures} \label{subsection:poset}
Two extreme cases of $N_\infty$ operads arise:

\begin{definition}[\cite{BH:OperMult}, Section~3.1] \label{def G-E infty}
If for all $n \in \N$, $\mathcal{F}_n$ is the family of all graph subgroups of $G \times
\Sigma_n$, then $\mathcal{O}$ is called a $G$-$E_\infty$ \emph{operad} or \emph{complete}
$N_\infty$ \emph{operad}.
If for all $n$, $\mathcal{F}_n$ is the family of trivial graphs $H \times \{\id\}$, then
$\mathcal{O}$ is called a \emph{naive} $N_\infty$ \emph{operad}.
\end{definition}

Algebras over $G$-$E_\infty$ operads are equivariant $E_\infty$ ring spectra equipped with a complete
collection of norm maps and form a category which is Quillen equivalent to that of strict commutative
monoids in $G$-spectra. Naive $N_\infty$ operads are non-equivariant $E_\infty$ operads equipped with the
trivial $G$-action. Their algebras are all $G$-spectra that are underlying $E_\infty$
ring spectra with no specified non-trivial norm maps. The $N_\infty$ operads
with other collections of admissible sets interpolate between those two extremes. We refer to
\cite[Section~6]{BH:OperMult} for proofs and further details.

The collection of homotopy classes of $N_\infty$ operads forms a poset that only depends on the
combinatorial data of the admissible sets, as we recall now.

\begin{definition}[\cite{BH:OperMult}, Def.~3.9]
A morphism of $N_\infty$ operads $\cO \to \cO'$ is a \emph{weak equivalence} if it induces a weak equivalence of spaces
$\cO(n)^\Gamma \to \cO'(n)^\Gamma$ for all $n \geq 0$ and all subgroups $\Gamma \leq G \times \Sigma_n$.
\end{definition}

Blumberg-Hill conjectured the following equivalence of categories and proved the ``fully faithful" part
\cite[Thm.~3.24]{BH:OperMult}.
Different proofs for the essential surjectivity were given by Gutierrez-White \cite[Thm.~4.7]{GW}, Rubin
\cite[Thm.~3.3]{rubin:realization}  and Bonventre-Pereira \cite[Cor.~IV]{bonventre-pereira}, and it should be possible to extract
an $\infty$-categorical proof from \cite{barwick:PHCT-intro} and its sequels.

\begin{thm}[Blumberg-Hill et al.]
\label{realization}
The functor from the homotopy category of $N_\infty$ operads (\wrt the above notion of weak equivalence) to the poset of indexing
systems which assigns to each $N_\infty$ operad its collection of admissible sets is an equivalence of categories.
\end{thm}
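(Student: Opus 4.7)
The plan is to separate the equivalence-of-categories claim into the two standard pieces: fully faithfulness of the functor and essential surjectivity. Because the target poset has at most one morphism between any two objects, fully faithfulness reduces to showing that, for any two $N_\infty$ operads $\cO$ and $\cO'$ with associated indexing systems $\cI$ and $\cI'$, the set of homotopy classes of operad maps $\cO \to \cO'$ is a singleton when $\cI \subseteq \cI'$ and is empty otherwise. Essential surjectivity requires that, given any indexing system $\cJ$ in the sense of Definition~\ref{def indexing system}, one can exhibit an $N_\infty$ operad whose admissible sets recover $\cJ$ exactly.

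For fully faithfulness, the central input is that each $\cO(n)$ is by definition a universal $(G \times \Sigma_n)$-space for the family $\cF_n$ of graph subgroups corresponding to admissible $H$-sets of size $n$. Such universal spaces are characterized up to equivariant homotopy by having contractible $\Gamma$-fixed points for $\Gamma \in \cF_n$ and empty fixed points otherwise. Equivariant obstruction theory (or a direct cellular induction) then shows that the space of $(G \times \Sigma_n)$-equivariant maps $\cO(n) \to \cO'(n)$ is contractible when $\cF_n \subseteq \cF'_n$ and is empty otherwise. To assemble these level-wise maps into an honest operad map I would proceed by induction on arity, noting that every operadic composition diagram lands in a universal space for a family related by the closure properties of indexing systems; contractibility of the relevant mapping spaces then simultaneously guarantees that the maps exist and are unique up to homotopy, which takes care of both existence and uniqueness at once.

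For essential surjectivity, given an indexing system $\cI$ I would take $\cO_\cI(n)$ to be a cofibrant model of $E\cF_n$ and define operad composition maps
\[ \cO_\cI(k) \times \cO_\cI(n_1) \times \cdots \times \cO_\cI(n_k) \to \cO_\cI(n_1+\cdots+n_k) \]
as essentially unique $(G \times \Sigma_\bullet)$-equivariant maps between universal spaces. The closure of $\cI$ under self-induction from Definition~\ref{def indexing system}~(iii) is precisely the algebraic avatar of the fact that the wreath-product subgroup arising in an operadic composition of admissible actions is again admissible, so the target family is large enough for these maps to exist. This is what the proofs of Gutierrez-White, Rubin, and Bonventre-Pereira each carry out, using respectively transferred model structures on colored operads, an explicit combinatorial realization, and genuine-operadic methods; any one of them can be cited here.

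The hard part will be essential surjectivity rather than fully faithfulness: while the homotopy-theoretic content of the latter is cleanly packaged by universal-space theory, equipping the collection $\{E\cF_n\}_n$ with an honest operad structure that realizes a \emph{prescribed} indexing system, without inadvertently enlarging any of the families $\cF_n$ to admit extra graph subgroups, is the technically delicate step. It is precisely this rectification problem—matching the combinatorial closure properties of $\cI$ with a strictly coherent composition on universal spaces—that forces the cited proofs to deploy quite different machinery.
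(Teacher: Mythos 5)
The paper does not prove Theorem~\ref{realization} at all: it is recalled as a known result, with the ``fully faithful'' half attributed to Blumberg--Hill \cite[Thm.~3.24]{BH:OperMult} and the essential surjectivity attributed separately to Gutierrez--White, Rubin, and Bonventre--Pereira, with each only cited. So there is no in-paper argument to compare against; what you have written is a sketch of the arguments found in those references, not a route through anything this paper does.

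As a sketch of the literature your proposal is essentially accurate, and you correctly identify essential surjectivity as the genuinely hard step while fully faithfulness follows from the theory of universal spaces for families. Two small cautions are worth flagging. First, your fully faithfulness argument quietly conflates two different claims: the levelwise statement that the $(G\times\Sigma_n)$-mapping space $\cO(n)\to\cO'(n)$ is contractible or empty according to whether $\cF_n\subseteq\cF'_n$, and the operadic statement that the space of \emph{operad} maps $\cO\to\cO'$ is contractible or empty. Passing from the first to the second is not a trivial ``induction on arity''; Blumberg--Hill need to work in a model category of $G$-operads and use that the relevant universal-space conditions are preserved by the free/forgetful adjunction and by the composition constraints, so that the mapping space in the operad category is still controlled by fixed-point data. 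Your phrasing that contractibility ``simultaneously guarantees'' existence and coherence elides exactly this point. Second, for essential surjectivity you rightly say that the difficulty is rectifying the coherence of the chosen maps on the spaces $E\cF_n$, but the phrasing ``take $\cO_\cI(n)$ to be a cofibrant model of $E\cF_n$ and define operad composition maps $\ldots$ as essentially unique equivariant maps'' suggests one can simply write the composition maps down; the cited proofs are needed precisely because this naive approach does not directly produce a strictly associative, $\Sigma$-equivariant operad with the right homotopy type at every level. You do acknowledge this in your final paragraph, which is the correct assessment, but the reader should not be left with the impression that the construction preceding it works as stated.
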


\begin{rem} \label{rem cofibrancy one}
We record a technical detail for later reference: \cite[Thm.~4.10]{GW} guarantees that for each indexing
system $\cI$, we can find a corresponding $N_\infty$ operad $\cO$ which is $\Sigma$-\emph{cofibrant}, i.e., each $\cO(n)$ has the homotopy type
of a (necessarily $\Sigma_n$-free) $(G \times \Sigma_n)$-CW complex.
This will be used in Section \ref{subsection:preservation htpy}.
\end{rem}

\subsection{Mackey functors, Green rings and (incomplete) Tambara functors}
\label{subsection:Tambara}
Recall that a \emph{Mackey functor} $\uM$ (\wrt an ambient group $G$ which we leave implicit in the notation) consists of an abelian
group $\uM(T)$ for each finite $G$-set, equipped with a structure map $\uM(X) \to \uM(Z)$ for
each span
\[ X \stackrel{r}{\longleftarrow} Y \stackrel{t}{\longrightarrow} Z, \]
subject to a list of axioms. In particular, $\uM$ is additive in the sense that $\uM(S \sqcup T) \cong \uM(S) \times \uM(T)$.
Thus, it is determined on objects by the values $\uM(H) := \uM(G/H)$ for subgroups $H \leq G$.
We refer to \cite[Section~3]{strickland:tambara} for details.

A Mackey functor $\uR$ is a \emph{Green ring} if $\uR(X)$ is a commutative ring for all $G$-sets
$X$ such that all restriction maps are ring homomorphisms and all transfers are homomorphisms of
modules over the target.

Many naturally occuring examples of Green rings such as the Burnside ring $A(-)$ or the complex representation ring $RU(-)$ come
equipped with additional multiplicative transfers, called \emph{norms}. Green rings with compatible norms are known as
\emph{Tambara functors} (originally defined as ``TNR functors" \cite{tambara}) and were generalized in \cite{BH:ITF} to cases where
only some of the norm maps are available.
We quickly review these \emph{incomplete Tambara functors}.

Let ${bispan}^G$ denote the category of \emph{bispans of $G$-sets}.
It has objects the finite $G$-sets and morphisms the isomorphism classes of bispans of finite $G$-sets
\[ X \stackrel{r}{\longleftarrow} Y \stackrel{n}{\longrightarrow} Z \stackrel{t}{\longrightarrow} W. \]
We refer to \cite[Section 6]{strickland:tambara} for the definition of composition and further details. Blumberg-Hill observed that
one can restrict the class of maps $n$ which are allowed at the central position of a bispan to encode Tambara functors with
incomplete collections of norms, as we recall now.

\begin{definition}[\cite{BH:ITF}, Sections~2.2,~3.1] \label{def D}
A subcategory $D$ of $\Set^G$ is called
\begin{enumerate}[1)]
  \item \emph{wide} if it contains all objects,
  \item \emph{pullback-stable} if any base-change of a map in $D$ is again in $D$, and
  \item \emph{finite coproduct-complete} if it has all finite coproducts and they are created
  in $\Set^G$.
\end{enumerate}
\end{definition}

\begin{thm}[\cite{BH:ITF}, Thm.~2.10] \label{restricted polynomials}
Let $D \subseteq \Set^G$ be a wide, pullback-stable subcategory, then the wide subgraph
${bispan}_D^G$ of the category of bispans that only contains morphisms of
the form
\[ X \leftarrow Y \to Z \to W \]
where $Y \to Z$ is in $D$, forms a subcategory.
\end{thm}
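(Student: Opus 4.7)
My plan is to verify the two axioms required for ${bispan}_D^G$ to be a subcategory of ${bispan}^G$: it must contain all identities, and it must be closed under composition. Containment of identities is immediate, since for each object $X$ the identity bispan $X \xleftarrow{\id} X \xrightarrow{\id} X \xrightarrow{\id} X$ has middle map $\id_X$, which lies in $D$ by the wideness (and subcategory) assumption.

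For closure under composition, I would unpack the standard composition formula for bispans (as in \cite[Section~6]{strickland:tambara}) and track the middle map step by step. Given two composable bispans
\[ \alpha\colon X \xleftarrow{r_1} A_1 \xrightarrow{p_1} B_1 \xrightarrow{t_1} Y, \qquad \beta\colon Y \xleftarrow{r_2} A_2 \xrightarrow{p_2} B_2 \xrightarrow{t_2} Z \]
with $p_1,p_2 \in D$, the Tambara composite $\beta\circ\alpha$ is obtained by rewriting the formal word (restriction, norm, transfer, restriction, norm, transfer) into a single (restriction, norm, transfer) bispan. This uses three elementary moves: (a) commuting a transfer past a restriction via a Beck--Chevalley pullback; (b) commuting a norm past a restriction, again via a pullback; and (c) commuting a transfer past a norm via Tambara's distributive law, encoded in an exponential diagram.

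Moves (a) and (b) cause no difficulty: the only norm-like maps they introduce are literal base changes of $p_1$ or $p_2$, which remain in $D$ by pullback-stability, and compositions of such maps remain in $D$ because $D$ is a subcategory. The main obstacle is move (c). Given a transfer-then-norm composite $A \xrightarrow{t} B \xrightarrow{n} C$ with $n \in D$, the distributive law rewrites it as a new norm $\pi$ followed by a new transfer, where $\pi\colon \prod_n(t) \to C$ is the projection from the exponential object and the new transfer is the evaluation $\epsilon\colon \prod_n(t)\times_C B \to A$. The technical heart of the proof is to verify that $\pi$ itself lies in $D$; my approach is to resolve the exponential diagram into pullback squares along (base changes of) $n$, so that pullback-stability of $D$ together with closure under composition propagates $D$-membership from $n$ to $\pi$.

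Assembling the moves, the middle map of $\beta\circ\alpha$ is realized as a composition of maps each of which is either a base change of one of $p_1,p_2$ or arises as the new norm produced by an instance of move (c); all of these lie in $D$. Hence $\beta\circ\alpha \in {bispan}_D^G$, finishing the verification. The only subtle point is the exponential-diagram step; the rest is careful bookkeeping with Beck--Chevalley squares.
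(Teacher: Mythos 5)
Your skeleton is fine: containment of identities follows from wideness, and you correctly identify the three rewriting moves used to compute a composite of two bispans. But your analysis of move (c), the distributive law, has the roles of the output maps scrambled, and this sends you after the wrong (and, worse, generally false) lemma. In the exponential diagram for $A \xrightarrow{t} B \xrightarrow{n} C$, write $W$ for the dependent product, $\omega\colon W \to C$ for its projection, $V = B \times_C W$ with base-change projection $\xi\colon V \to W$, and $\varepsilon\colon V \to A$ for the evaluation. Tambara's identity reads $N_n T_t = T_{\omega}\, N_{\xi}\, R_{\varepsilon}$. The new \emph{norm} is $\xi$, which is by construction a base change of $n$ along $\omega$ and therefore lies in $D$ automatically by pullback-stability; the projection $\omega$ (your $\pi$) is the new \emph{transfer}, and $\varepsilon$ is the new \emph{restriction}. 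You have cast $\omega$ as the norm and $\varepsilon$ as the transfer, which is the wrong way round.

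The upshot is that the ``technical heart'' you propose --- showing $\pi \in D$ by resolving the exponential diagram --- is both unnecessary and false in general. Take $D$ to be the subcategory of isomorphisms in $\Set^G$, which is wide and pullback-stable; when $n$ is an isomorphism, $\pi$ identifies with $n \circ t$, which is usually not an isomorphism. The theorem nevertheless holds in this case (the $D$-bispans are just ordinary spans), precisely because the new norm produced by the exponential diagram is then an identity, not $\pi$. With the roles straightened out, the proof closes immediately: tracing the composition algorithm, the middle map of the composite bispan is the composite of exactly two maps, one a base change of the second bispan's middle map (from the exponential diagram) and one an iterated base change of the first bispan's middle map (from the Beck--Chevalley squares). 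Pullback-stability puts each factor in $D$, and the subcategory axiom puts their composite in $D$. The dependent product projection only ever contributes to the transfer leg, where no restriction on the class of maps is imposed.
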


\begin{definition}[\cite{BH:ITF}, Def.~3.9]
For an indexing system $\cI$, let $\Set_\cI^G \subseteq \Set^G$ be the wide subgraph which contains a morphism 
$f \colon X \to Y$ if and only if for all $y \in Y$, the quotient of stabilizers $G_{f(y)}/G_y$ is in $\cI(G_{f(y)})$.
\end{definition}

\begin{thm}[\cite{BH:ITF}, Thm.~3.18] \label{iso IS D}
The assignment $\cI \mapsto \Set_\cI^G$ gives rise to an isomorphism between the poset of indexing systems and the poset of wide,
pullback-stable, finite coproduct-complete subcategories $D \subseteq \Set^G$.
\end{thm}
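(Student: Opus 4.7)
The plan is to exhibit an explicit inverse to the assignment $\cI \mapsto \Set_\cI^G$ and show that both maps preserve and reflect the partial order. Given a wide, pullback-stable, finite coproduct-complete subcategory $D \subseteq \Set^G$, I would define a candidate indexing system $\cI_D$ by setting $\cI_D(H) \subseteq \Set^H$ to be the full subcategory whose objects are those finite $H$-sets $T$ with the property that for every orbit $H/K \subseteq T$, the canonical projection $G/K \to G/H$ lies in $D$. Since an indexing system is determined by its admissible orbits and since a morphism $f \colon X \to Y$ of finite $G$-sets lies in $\Set_\cI^G$ iff each of its fiber orbits $G_{f(x)}/G_x$ is admissible, the round-trip identities $\cI_{\Set_\cI^G} = \cI$ and $\Set_{\cI_D}^G = D$ will reduce to unwinding definitions once the two assignments are known to land in the correct posets.

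For well-definedness of $\cI \mapsto \Set_\cI^G$, closure under identities is immediate (trivial orbits lie in every $\cI(H)$), and the key non-trivial check is closure under composition: given $L \leq K \leq H$ with $H/K \in \cI(H)$ and $K/L \in \cI(K)$, the self-induction axiom (iii) of Definition~\ref{def indexing system} gives $H/L = \Ind_K^H(K/L) \in \cI(H)$, and a global composition $X \to Y \to Z$ then decomposes orbit-by-orbit into such chains. Pullback stability follows from axiom (ii) (closure under finite limits in each level $\cI(H)$), combined with the fact that restrictions along subgroup inclusions preserve admissibility; the relevant calculation is the double coset decomposition of a pullback of orbits, whose summands each correspond to subconjugate orbits that are admissible by functoriality of $\cI$. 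Finite coproduct-completeness is immediate from the orbit-wise definition.

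For the reverse direction, the axioms on $D$ translate directly: wideness yields that all trivial $H$-sets are in $\cI_D(H)$; pullback stability of $D$ applied to restriction maps $G/H' \to G/H$ gives the restriction functoriality along inclusions, and combined with the conjugation maps $G/H \to G/{^gH}$ (which are isomorphisms in $\Set^G$ and hence automatically in $D$) yields the contravariant functor $\OrbG^{op} \to \mathrm{Sym}$; coproduct-completeness yields that each $\cI_D(H)$ is closed under disjoint unions; closure of $D$ under composition yields the self-induction axiom, since $H/K \in \cI_D(H)$ and $K/L \in \cI_D(K)$ exactly mean that $G/K \to G/H$ and $G/L \to G/K$ both lie in $D$, so their composite $G/L \to G/H$ does too. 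The closure of $\cI_D(H)$ under finite limits is the one point that needs a bit of care: it suffices to check closure under pullbacks of orbits over a common target, which reduces via the double coset formula to pullback-stability of $D$.

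The main obstacle I anticipate is matching the orbit-wise structure of morphisms in $\Set^G$ with the structurally richer axioms of an indexing system, in particular verifying that self-induction and closure under finite limits correspond precisely to composition and pullback-stability in $D$; the bookkeeping here is where conjugation in $G$ and the double coset formula come in. Order-preservation in both directions is finally tautological: $\cI \subseteq \cI'$ iff every admissible orbit for $\cI$ is admissible for $\cI'$ iff $\Set_\cI^G \subseteq \Set_{\cI'}^G$, and symmetrically for the inverse, so the two maps assemble into an isomorphism of posets.
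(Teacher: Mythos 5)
This theorem is cited from \cite{BH:ITF} (Thm.~3.18); the paper under review does not prove it, so there is no internal proof to compare against. Your sketch outlines the same strategy used in \cite{BH:ITF}: construct the candidate inverse $D \mapsto \cI_D$ by declaring $T \in \cI_D(H)$ iff $G\times_H T \to G/H$ lies in $D$, verify that each direction lands in the correct poset, and observe that the round-trip identities and order-preservation are essentially definitional. The key translations you identify are the correct ones: closure under composition in $\Set_\cI^G$ matches the self-induction axiom via $\Ind_K^H(K/L) \cong H/L$; pullback-stability matches closure under finite limits of each $\cI(H)$ via the double-coset decomposition of pullbacks of orbits; and finite coproduct-completeness matches $\cI(H)$ being closed under disjoint union.

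Two details deserve a second look. You assert that isomorphisms of $G$-sets are ``automatically in $D$.'' This does not obviously follow from the three stated axioms: wideness supplies identities, pullback-stability produces base-changes of maps already known to be in $D$, and coproduct-completeness produces coproduct inclusions and cotupling maps, but none of these mechanisms manufactures an arbitrary isomorphism $\sigma\colon X\to Y$. This matters precisely where you invoke it, in establishing the conjugation functoriality of $\cI_D$ along $\OrbG^{op}$, so the point needs to be addressed (for instance by showing that the subcategories in the relevant poset may be taken to be replete, or by checking that the correspondence is insensitive to iso-closure). Second, closure of $\cI_D(H)$ under finite limits requires more than pullback-stability of $D$ alone: products of admissible $H$-sets come from pullback-stability composed with maps in $D$, but equalizers are subobjects, and passing to a union of orbits inside an admissible $H$-set uses coproduct-completeness to see that the inclusion $G\times_H S' \hookrightarrow G\times_H S$ lies in $D$. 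Your phrasing attributes this entirely to pullback-stability and the double-coset formula, which understates the role of the coproduct axiom. Both are fillable gaps in an otherwise sound outline, not errors of approach.
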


\begin{definition}[\cite{BH:ITF}, Def.~4.1] \label{def ITF}
Let $D \subseteq \Set^G$ be a wide, pullback-stable symmetric monoidal subcategory.
\begin{enumerate}[1)]
  \item A $D$-\emph{semi Tambara functor} is a product-preserving functor ${bispan}_D^G \to \Set$.
  \item A $D$-\emph{Tambara functor} is a $D$-Tambara functor that is abelian group valued on
  objects.
  \item For an indexing system $\cI$ and $D = \Set_{\cI}^G$, define an $\cI$-\emph{Tambara functor} to be a $D$-Tambara functor.
  \item If $D = \Set^G$, then $D$-Tambara functors are simply called Tambara functors.
\end{enumerate}
\end{definition}

\begin{rem}
We did not require that $D$ be finite coproduct-complete in Definition~\ref{def ITF}. If this also holds, i.e., if
$D$ corresponds to an indexing system $\cI$, then it can be shown that every $\cI$-Tambara functor has an
underlying Green ring and all norm maps are maps of multiplicative monoids, see \cite[Prop.~4.6 and Cor.~4.8]{BH:ITF}.
\end{rem}

The underlying object of a product in the category of bispans is the coproduct of $G$-sets, see
\cite[Prop.~7.5~(i)]{tambara}, so the condition that any $D$-Tambara functor $\uR$ be product-preserving means that
\[ \uR(S \sqcup T) \cong \uR(S) \times \uR(T) \]
for all finite $G$-sets $S$ and $T$. Hence, on the level of objects, $\uR$ is determined by the
groups $\uR(H) := \uR(G/H)$ for all $H \leq G$.

\begin{notation}
We will use the following special cases of the
structure maps frequently in the present paper:
Spans of the form
$(Y \stackrel{f}{\longleftarrow} X \stackrel{\id}{\longrightarrow} X \stackrel{\id}{\longrightarrow} X)$
give rise to \emph{restrictions} $R_f \colon \uR(Y) \to \uR(X)$ and spans of the form
$(X \stackrel{\id}{\longleftarrow} X \stackrel{\id}{\longrightarrow} X \stackrel{f}{\longrightarrow} Y)$
induce \emph{transfers} $T_f \colon \uR(X) \to \uR(Y)$. Moreover, spans of the form
$(X \stackrel{\id}{\longleftarrow} X \stackrel{f}{\longrightarrow} Y \stackrel{\id}{\longrightarrow} Y)$
give rise to \emph{norms} $N_f \colon \uR(X) \to \uR(Y)$.
If $f \colon X \to Y$ is the canonical surjection $G/K \to G/H$ arising from nested subgroup inclusions $K \leq H \leq G$, then we
write $R^H_K := R_f$, $T_K^H := T_f$ and $N_K^H := N_f$, respectively.
\end{notation}

\begin{example} \label{example TF}
The Burnside ring $A(G)$ is a Tambara functor. Restrictions, transfers and norms are given by restriction, induction and
co-induction of $G$-sets, respectively. Similarly, the complex representation ring $RU(G)$ is a Tambara functor with restrictions,
transfers and norms given by restriction, induction and tensor induction of $G$-representations, respectively. The ``linearization
map'' $A(G) \to RU(G)$ that sends a finite $G$-set to its associated permutation representation is a map of Tambara functors, i.e.,
it is compatible with all of the structure maps.
\end{example}

\nodetails{}{ \begin{example}
\redfont{LATER: Group cohomology?}
\end{example}}

Another class of examples arises from equivariant stable homotopy theory: The norm maps $N_K^H$ of an $N_\infty$ ring spectrum $R$
give rise to multiplicative transfers on equivariant homotopy groups
\[ N_K^H \colon \pi_V^K(R) \to \pi_{\Ind_K^H(V)}^H(\bigwedge_{H/K} R) \]
given by sending the $K$-equivariant homotopy class of $f
\colon \sphere^V \to R$ to the $H$-equivariant homotopy class of the composite
\[ \sphere^{\Ind_K^H(V)} \cong \bigwedge_{H/K} \sphere^V \stackrel{\bigwedge f}{\longrightarrow} \bigwedge_{H/K} R
\stackrel{N_K^H}{\longrightarrow} R, \]
see \cite[Section~2.3.3]{HHR}.

\begin{thm}[\cite{brun:eqvar-spectra}, \cite{BH:ITF}, Thm.~4.14] \label{Brun relative}
Let $R$ be an algebra over an $N_\infty$ operad $\cO$, then $\underline{\pi}_0(R)$ is an $\cI$-Tambara functor structured by the
indexing system $\cI$ corresponding to $\cO$ under the equivalence of categories from Theorem \ref{realization}.
\end{thm}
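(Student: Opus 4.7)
The plan is to follow Brun's original strategy (from \cite{brun:eqvar-spectra}) for commutative ring $G$-spectra, restricting attention to the admissible orbits. The main point is that all of Brun's arguments are local in the sense that they only involve base-changes and compositions of norm maps, and such operations preserve admissibility by the axioms of an indexing system.

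First I would set up the structure maps on $\underline{\pi}_0(R)$. For any genuine $G$-spectrum, the homotopy Mackey functor $\underline{\pi}_0(R)$ already carries restrictions $R_K^H$ and additive transfers $T_K^H$ for all $K \leq H \leq G$, so the span part of $\mathrm{bispan}^G$ acts on it. For nested subgroups $K \leq H$ with $H/K$ admissible for $\cO$, the operad structure provides a genuine norm map $N_K^H \colon \bigwedge_{H/K} \Res^G_K(R) \to \Res^G_H(R)$ as the relevant counit, and I would define the norm $N_K^H \colon \pi_0^K(R) \to \pi_0^H(R)$ via the formula displayed just above the theorem statement. By additivity and the decomposition of a general admissible $f \colon X \to Y$ into its orbit pieces, one extends this to norm maps $N_f$ for every morphism $f$ in $\Set_\cI^G$.

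Next I would verify the functoriality of the resulting assignment $\mathrm{bispan}_D^G \to \Set$, where $D = \Set_\cI^G$. The domain is a genuine category by Theorem~\ref{restricted polynomials}, and composition in it involves pullback squares in $\Set^G$ whose maps on central legs are compositions of $D$-maps. Closure of $\cI$ under self-induction and base change (Definition~\ref{def indexing system} combined with Theorem~\ref{iso IS D}) guarantees that these composites stay inside $D$, so the axioms one needs to check for $\underline{\pi}_0(R)$ only involve norms that are actually available on $R$. This reduces the argument to checking a finite list of relations: associativity of norms along composites $L \leq K \leq H$, the interchange (double coset) formula relating norms with restrictions and with transfers, and product-preservation $\underline{\pi}_0(R)(S \sqcup T) \cong \underline{\pi}_0(R)(S) \times \underline{\pi}_0(R)(T)$.

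Product-preservation is immediate from the Mackey structure. Associativity of norms follows from the compatibility of the iterated norm functors $\bigwedge_{H/K}\bigwedge_{K/L}(-) \simeq \bigwedge_{H/L}(-)$ together with coherence of the counits of the $\bigwedge_{H/K} \dashv \Res$ adjunctions recalled in Section~\ref{subsection:operads}. The main obstacle, and the technical heart of Brun's argument, is the distributive law between norms and transfers (equivalently, the Tambara reciprocity formula). This is exactly where one needs the multiplicative double coset formula for the Hill-Hopkins-Ravenel norm; the key point is that the exponential diagram describing this formula lives entirely in $\Set^G$ and each of its norm legs is a base change of $f \colon X \to Z$, hence admissible whenever $f$ is. Thus Brun's identity, proved in the complete case, restricts without modification to the subcategory $\mathrm{bispan}_D^G$, and one obtains a product-preserving functor $\mathrm{bispan}_D^G \to \Set$ with abelian group values, i.e., a $D$-Tambara functor. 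By Theorem~\ref{iso IS D} this is the same as an $\cI$-Tambara functor, proving the theorem.
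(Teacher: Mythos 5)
The paper does not prove this theorem: it is cited verbatim from \cite[Thm.~4.14]{BH:ITF}, with Brun credited for the complete ($G$-$E_\infty$) case. Your proposal supplies a proof sketch where the paper supplies none, so there is no in-paper argument to compare against; what I can do is assess the sketch on its own terms.

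Your strategy is the right one and your identification of the technical heart is correct. In particular, your argument for why the Tambara reciprocity (exponential diagram) stays inside $D = \Set_\cI^G$ is essentially accurate: in the exponential diagram for $N_f T_g$ with $f\colon X\to Y$ and $g\colon Z\to X$, the norm leg that survives in the output, $h'\colon A = X\times_Y B \to B$, really is the pullback of $f$ along $h\colon B\to Y$, and pullback-stability of $D$ then gives admissibility. (You wrote ``base change of $f\colon X\to Z$'' --- presumably a typo for $f\colon X\to Y$.) This, together with pullback-stability ensuring that compositions of bispans stay in $\mathrm{bispan}_D^G$ (Theorem~\ref{restricted polynomials}), is the content of BH:ITF's framework, and your reduction to ``all intermediate norms are admissible, so the complete-case argument restricts'' is the correct high-level picture.

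Two imprecisions are worth flagging. First, you appeal to ``coherence of the counits of the $\bigwedge_{H/K} \dashv \Res$ adjunctions recalled in Section~\ref{subsection:operads}'' to get associativity of norms, but the counit description of $N_K^H$ is for strict commutative monoids in $G$-spectra (the $G$-$E_\infty$ case). A general $\cO$-algebra is not a strict commutative monoid, and its norm maps come from the $\cO$-action via \cite[Thm.~6.11]{BH:OperMult}; the relevant coherence must be extracted from the operad, not from an adjunction counit. Second, extending the norms to a general admissible $f\colon X\to Y$ is not done ``by additivity''; norms are multiplicative, and what is really used is the product-preservation axiom $\uR(S\sqcup T)\cong\uR(S)\times\uR(T)$ plus the orbit decomposition --- exactly the bookkeeping spelled out in Proposition~\ref{prop projections} of the paper. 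Neither of these is fatal, but both elide care that the cited proof in BH:ITF actually takes, which is the whole point of proving the theorem in the incomplete setting rather than just invoking Brun.
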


The structure on the entire homotopy ring $\{ \pi_V^H(R) \}_{H \leq G, \; V \in RO(H)}$ is described in
\cite{angeltveit-bohmann}.
For the purpose of the present paper, it suffices to consider the zeroth equivariant homotopy groups.

\subsection{Localization and $N_\infty$ rings} \label{subsection:preservation htpy}
We record some preservation results for algebraic structure under localizations of $G$-spectra
which invert a single element $x \in \pi_\ast^G(\sphere)$. For definiteness, we work in the
category of orthogonal $G$-spectra equipped with the positive complete model structure
\cite[Thm.~B.63]{HHR}.

We begin with an example that illustrates that localization at a single homotopy element need not preserve
any of the (non-trivial) norm maps of an $N_\infty$ ring spectrum.

\begin{example}[\cite{HH:EqvarSymMon}, Prop.~6.1] \label{example hill}
The inclusion of $0$ into the reduced regular representation $\tilde{\rho}$ of $G$ defines an
essential map $S^0 \to S^{\tilde{\rho}}$ of $G$-spaces all of whose restrictions to proper
subgroups are equivariantly null because they necessarily have fixed points along which
the two points of $S^0$ can be connected by an equivariant path. The resulting map gives
rise to an element $\alpha \in \pi_{-\tilde{\rho}}^G(\sphere)$ such that the resulting
$G$-spectrum $\Loc{\sphere}{\alpha}$ is non-trivial but all of its restrictions to proper
subgroups are equivariantly contractible. Thus, it cannot admit any norms
\[ \bigwedge_{G/H} \Res^G_H \Loc{\sphere}{\alpha} \to \Loc{\sphere}{\alpha} \]
because on homotopy rings, they would induce ring maps from zero rings to non-trivial rings.
\end{example}

\begin{rem}
The element $\alpha$ is not an element of the $\Z$-graded homotopy groups $\pi_\ast^G(\sphere)$, but
only of the $RO(G)$-graded homotopy groups $\pi_\star^G(\sphere)$. However, our results in Section
\ref{section:results} show that even when we restrict attention to elements $x \in \pi_0^G(\sphere)$, we can construct many other examples of the loss of $N_\infty$
structure under localization in terms of elementary group theory. Indeed, the $A_5$-spectra
$\Loc{\sphere}{e_{A_5}}$ and $\Loc{\sphere}{\alpha}$ are very similar in terms of their equivariant
multiplicative behavior, see Section~\ref{section:examples}.
\end{rem}

Idempotent homotopy elements necessarily live in degree zero. We now focus on localizations given by such
elements.

\begin{notation}
By abuse of notation, let $x$ be a map representing the homotopy class $x \in
\pi_0^G(\sphere)$. We write $\cC_{x}$ for the set of morphisms of $G$-spectra
\[ \cC_{x} = \{ G_+ \wedge_H S^n \wedge x \, | \, H \leq G, \, n \in \Z \}. \]
\end{notation}

\begin{prop}
Bousfield localization at $\cC_{x}$ has the following properties:
\begin{enumerate}[i)]
    \item It is given by smashing with $\Loc{\sphere}{x}$, hence recovers (orbitwise)
    $x$-localization on the level of equivariant homotopy groups.
    \item It is a \emph{monoidal} localization in the sense that the resulting local model
    structure is again a monoidal model category.
\end{enumerate}
\end{prop}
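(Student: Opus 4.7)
The approach for part (i) is to verify that smashing with $\Loc{\sphere}{x}$ realizes the Bousfield localization at $\cC_x$ by checking local objects and local equivalences directly. First, I would characterize the $\cC_x$-local $G$-spectra: unwinding the definition with the Wirthm\"uller adjunction $[G_+ \wedge_H -, -]^G \cong [-, \Res^G_H(-)]^H$ and the suspension isomorphism shows that $Z$ is $\cC_x$-local precisely when, for every $H \leq G$ and every $n \in \Z$, multiplication by $\Res^G_H(x)$ acts as an automorphism of $\pi_n^H(Z)$. For any $G$-spectrum $Y$, the smash product $Y \wedge \Loc{\sphere}{x}$ is naturally identified with the sequential homotopy colimit of $Y \xrightarrow{x} Y \xrightarrow{x} \cdots$. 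Commuting $\pi_n^H(-)$ with filtered homotopy colimits of orthogonal $G$-spectra yields
\[ \pi_n^H(Y \wedge \Loc{\sphere}{x}) \cong \pi_n^H(Y)[\Res^G_H(x)^{-1}], \]
so the right-hand side is $\cC_x$-local and simultaneously realizes the orbitwise $x$-localization on homotopy groups.

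To finish part (i), I would verify that the unit $Y \to Y \wedge \Loc{\sphere}{x}$ is a $\cC_x$-equivalence. This reduces to the idempotency statement $\Loc{\sphere}{x} \wedge \Loc{\sphere}{x} \simeq \Loc{\sphere}{x}$, which is visible on homotopy groups because inverting $\Res^G_H(x)$ a second time is redundant. Consequently, a map $f$ is a $\cC_x$-equivalence if and only if $f \wedge \Loc{\sphere}{x}$ is a weak equivalence, and the unit certainly has this property. For part (ii), I would invoke the standard monoidal Bousfield criterion (cf.\ White's thesis and the treatment in \cite{HH:EqvarMultClosure}): a left localization of a monoidal model category is again monoidal provided the class of local equivalences is stable under smashing with cofibrant objects. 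By the characterization above, a map is a $\cC_x$-equivalence precisely when smashing with $\Loc{\sphere}{x}$ makes it a weak equivalence, so the pushout--product axiom of the positive complete model structure \cite[Thm.~B.63]{HHR} propagates this stability essentially for free.

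The main obstacle I anticipate is point-set bookkeeping rather than conceptual: one must ensure that the identification of $Y \wedge \Loc{\sphere}{x}$ with the sequential homotopy colimit and the commutativity of $\pi_n^H$ with filtered homotopy colimits hold in the positive complete model structure \emph{without} unnecessary cofibrancy hypotheses on $Y$, and that the detection criterion $(f$ is $\cC_x$-equivalence$) \Leftrightarrow (f \wedge \Loc{\sphere}{x}$ is an equivalence$)$ is justified at the level of model categories. A suitable (functorial) cofibrant replacement or an explicit telescope model for $\Loc{\sphere}{x}$ resolves these issues, and this is where most of the technical care will concentrate.
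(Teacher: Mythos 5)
Your argument is correct and follows essentially the same route as the paper: characterize $\cC_x$-local objects as those with $x$-local equivariant homotopy groups, identify the localization with $(-)\wedge\Loc{\sphere}{x}$, and then verify monoidality by checking that smashing a $\cC_x$-local equivalence with a cofibrant object remains a $\cC_x$-local equivalence (feeding into White's criterion). One small imprecision: at the end of part~(ii) what is actually needed is that smashing with a cofibrant $G$-spectrum preserves all weak equivalences (a flatness statement for the positive complete model structure), not the pushout--product axiom per se; the paper invokes this flatness directly.
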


\begin{proof}
Since the map $ho(\SpG)(G/H_+ \wedge \sphere^n \wedge x, X)$ is just the action of $x$ on $\pi_{n}^H(X)$, we see that an
object $X$ is $\cC_x$-local if and only if its equivariant homotopy groups are $x$-local. But $x$-localization is
given by smashing with $\Loc{\sphere}{x}$. \\
For part ii), let $f$ be in $\cC_x$ and let $K$ be any cofibrant $G$-spectrum. In particular, $f$ is a
$\cC_x$-local equivalence and hence $f \wedge \Loc{\sphere}{x}$ is a weak equivalence. Smashing with cofibrant
$G$-spectra preserves equivalences, so $f \wedge K \wedge \Loc{\sphere}{x}$ is an equivalence. But this means
that $f \wedge K$ is a $\cC_x$-local equivalence. Now \cite[Thm.~4.6]{white:monoidal_v2} implies that
$\cC_x$-localization is monoidal.
\end{proof}

We now present a preservation result for $N_\infty$ ring structures due to Gutierrez
and White. A similar result first appeared in the special case of $G$-$E_\infty$ rings in
\cite[Cor.~4.11]{HH:EqvarMultClosure}; the non-equivariant version of the statement goes back at least to
\cite[Thm.~VIII.2.2]{EKMM}.

\begin{definition}[\cite{GW}, Def.~7.3] \label{def preservation}
Let $\cP$ be a $G$-operad.
A Bousfield localization $L_{\cC}$ is said to \emph{preserve} $\cP$-algebras if the following two conditions
hold:
\begin{enumerate}[(1)]
    \item If $E$ is a $\cP$-algebra, then there is some $\cP$-algebra $\tilde{E}$ which is weakly
    equivalent as a $G$-spectrum to $L_{\cC}(E)$.
    \item If $E$ is cofibrant as a $\cP$-algebra (in the model structure with equivalences and fibrations
    detected by the forgetful functor to $G$-spectra), then there is a $\cP$-algebra $\tilde{E}$, a
    $\cP$-algebra homomorphism $r_E \colon E \to \tilde{E}$ and a weak equivalence $\beta_E \colon
    L_{\cC}(E) \to \tilde{E}$ such that the underlying $G$-spectrum of $\tilde{E}$ is $\cC$-local and such
    that in the homotopy category of $G$-spectra, $r_E$ equals the composite
    \[ E \to L_{\cC}(E) \stackrel{\beta_E}{\longrightarrow} \tilde{E}, \]
    where the first map is the canonical localization map.
\end{enumerate}
\end{definition}

Recall that a $G$-operad $\cP$ is called $\Sigma$-\emph{cofibrant} if all of its spaces $\cP(n)$ have the homotopy type of $(G
\times \Sigma_n)$-CW complexes. The following preservation result is a direct translation of
\cite[Cor.~7.10]{GW} to the positive complete model structure on orthogonal $G$-spectra.

\begin{thm}[\cite{GW}, Cor.~7.10] \label{thm htpy preservation}
Let $\cP$ be a $\Sigma$-cofibrant $N_\infty$ operad. Let $L_{\cC}$ be a monoidal left Bousfield localization. Then $L_{\cC}$
preserves $\cP$-algebras in $G$-spectra if and only if the functors
\[ G_+ \wedge_H \bigwedge_{H/K} \Res^G_K (-) \colon \SpG \to \SpG \]
preserve $\cC$-local equivalences between cofibrant objects for all $H \leq G$ and all transitive
$H$-sets $H/K$ which are admissible for $\cP$.
\end{thm}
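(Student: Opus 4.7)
The plan is to deduce the theorem from the abstract preservation result of Gutierrez--White \cite[Cor.~7.10]{GW} by identifying their abstract ``test functors'' with the specific indexed-norm functors $G_+ \wedge_H \bigwedge_{H/K} \Res^G_K(-)$ in the setting of the positive complete model structure on orthogonal $G$-spectra \cite[Thm.~B.63]{HHR}. Because the result is billed as a direct translation, no essentially new content is expected; the work is model-categorical bookkeeping.

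First I would invoke \cite[Cor.~7.10]{GW}, which asserts that for a monoidal left Bousfield localization on a sufficiently nice equivariant monoidal model category and a $\Sigma$-cofibrant $G$-operad $\cP$, preservation of $\cP$-algebras is equivalent to each constituent functor of the free $\cP$-algebra monad --- namely the functors $\cP(n) \wedge_{\Sigma_n} (-)^{\wedge n}$ --- preserving $\cC$-local equivalences between cofibrant objects. A quick check that the positive complete model structure on $\SpG$ falls into the class of model categories treated in \cite{GW} (monoidal, cellular, with the appropriate pushout-product and power axioms, all of which are verified in Appendix B of \cite{HHR}) transports the criterion verbatim to our setting.

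Second I would decompose the test functors according to the admissible-orbit structure of $\cP$. Since $\cP$ is $\Sigma$-cofibrant and $N_\infty$, each $\cP(n)$ has the homotopy type of a $(G \times \Sigma_n)$-CW complex whose cells are of the form $(G \times \Sigma_n)/\Gamma_+$ for $\Gamma \in \cF_n$, i.e., graphs of homomorphisms $\varphi \colon H \to \Sigma_n$ whose associated $H$-set of cardinality $n$ decomposes as a disjoint union of admissible transitive orbits $H/K_i$. Under the standard identification of $\Sigma_n$-orbits of the $n$-fold smash power with an indexed smash product \cite[Prop.~2.27]{HHR}, the contribution of such a cell to the test functor becomes a smash product of functors of the form $G_+ \wedge_H \bigwedge_{H/K_i} \Res^G_{K_i}(-)$, one for each transitive summand of $\varphi$. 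Using monoidality of the Bousfield localization (so that smashing with a cofibrant object preserves local equivalences between cofibrant inputs), preservation for each such compound functor reduces to preservation for its transitive constituents. Conversely, every admissible transitive $H/K$ appears as the associated $H$-set of some graph subgroup in some $\cF_n$, so the test condition is exactly the stated one.

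The main obstacle is the verification that the positive complete model structure satisfies the technical hypotheses of \cite[Cor.~7.10]{GW} (written in a slightly different symmetric-spectra formalism), in particular that the indexed smash products $\bigwedge_{H/K} \Res^G_K(-)$ send cofibrant $K$-spectra to cofibrant $H$-spectra and interact properly with cofibrations (a variant of the strong commutative monoid axiom), and that the free $\cP$-algebra functor admits the cellular decomposition described above. Both points are already available in \cite[Appendix~B]{HHR}; once they are in hand, the equivalence is assembled directly from the orbit-cell analysis.
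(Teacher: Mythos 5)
Your overall plan is sound and reaches the correct conclusion, but it re-derives more than the paper does, and it slightly mischaracterizes the form of \cite[Cor.~7.10]{GW}. According to the Remark immediately following this theorem in the paper, \cite[Cor.~7.10]{GW} is \emph{already} stated in terms of the indexed norm functors $G_+ \wedge_H \bigwedge_T \Res^G_K(-)$ ranging over all admissible $H$-sets $T$ of $\cP$, not in terms of the abstract monad terms $\cP(n) \wedge_{\Sigma_n}(-)^{\wedge n}$. The paper therefore treats the present theorem as essentially a citation: the only real translation work is (a) noting the positive complete model structure on orthogonal $G$-spectra satisfies the standing hypotheses of \cite{GW} (which Appendix~B of \cite{HHR} supplies), and (b) reducing from arbitrary admissible $T$ to transitive $T = H/K$, which the Remark handles via $\bigwedge_{T_1 \coprod T_2}(-) \simeq \bigwedge_{T_1}(-) \wedge \bigwedge_{T_2}(-)$ together with the fact that a smash of equivalences between cofibrant objects is an equivalence. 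Your cellular decomposition of $\cP(n)\wedge_{\Sigma_n}(-)^{\wedge n}$ into graph-subgroup cells is redoing work internal to \cite{GW}'s own proof of Cor.~7.10 rather than using its stated form.

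One small wrinkle to flag in your reduction: when the admissible $T$ is not transitive, the decomposition $\bigwedge_T \Res(-) \simeq \bigwedge_{T_1}\Res(-) \wedge \bigwedge_{T_2}\Res(-)$ is a smash product \emph{of $H$-spectra inside $G_+\wedge_H(-)$}, not a smash of $G$-spectra of the form $G_+\wedge_H(\cdots)$. So the monoidality one needs is at the level of $\Res_H$ of the localization (local equivalences of $H$-spectra are preserved by smashing with cofibrants), and then $G_+\wedge_H$ is applied to the resulting $H$-local equivalence. This is exactly what the paper's Remark invokes; your phrasing (``smash product of functors of the form $G_+\wedge_H\bigwedge_{H/K_i}\Res^G_{K_i}(-)$'') is imprecise on this point, though the underlying argument goes through.
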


\begin{rem}
The statement of \cite[Cor.~7.10]{GW} is actually phrased in terms of the functors $G_+ \wedge_H
\bigwedge_{T} \Res^G_K (-)$ for all $H \leq G$ and all admissible $H$-sets $T$. The formulations are easily
seen to be equivalent, using that $\bigwedge_{T_1 \coprod T_2} (-) \simeq \bigwedge_{T_1} (-) \wedge
\bigwedge_{T_2} (-)$ and that the smash product of two equivalences between cofibrant objects is an
equivalence.
\end{rem}

\begin{cor}[\cite{GW}, Cor.~7.5] \label{cor naive preservation}
Any monoidal left Bousfield localization $L_{\cC}$ preserves algebras over naive $N_\infty$ operads.
Since any algebra over an arbitrary $N_\infty$ operad $\cP$ admits the structure of a naive $N_\infty$ algebra
by restricting the operad action along (a representative of) the unique map from the naive $N_\infty$ operad
to $\cP$ in the homotopy category of $N_\infty$ operads, it follows that any monoidal left Bousfield
localization sends $\cP$-algebras to naive $N_\infty$ algebras.
\end{cor}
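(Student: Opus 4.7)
The plan is to invoke Theorem~\ref{thm htpy preservation} with $\cP$ taken to be a naive $N_\infty$ operad, and then observe that the indexing system of a naive $N_\infty$ operad is minimal.

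First I would unpack the admissibility hypothesis. By Definition~\ref{def G-E infty}, a naive $N_\infty$ operad $\cP$ has $\cF_n$ equal to the family of trivial graphs $H \times \{\id\}$, so the transitive $H$-sets admissible for $\cP$ are precisely the trivial (one-point) $H$-sets $H/H$. Consequently, the list of functors appearing in Theorem~\ref{thm htpy preservation} reduces to the functors
\[ G_+ \wedge_H \bigwedge_{H/H} \Res^G_H(-) \cong G_+ \wedge_H \Res^G_H(-) \]
indexed by $H \leq G$, using that the indexed smash product over a one-point $H$-set is the identity.

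Next I would apply the standard untwisting isomorphism $G_+ \wedge_H \Res^G_H(X) \cong G/H_+ \wedge X$, which is natural in the $G$-spectrum $X$. Since $G/H_+$ is a cofibrant $G$-spectrum and $L_{\cC}$ is a \emph{monoidal} left Bousfield localization, smashing with $G/H_+$ preserves $\cC$-local equivalences (this is precisely the conclusion of the argument used in the proof of part~ii) of the preceding proposition, applied to $K = G/H_+$). In particular, it preserves $\cC$-local equivalences between cofibrant objects, which verifies the hypothesis of Theorem~\ref{thm htpy preservation}. This proves the first assertion.

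For the second assertion, I would use Theorem~\ref{realization}: the indexing system of a naive $N_\infty$ operad consists only of the trivial $H$-sets, which are contained in $\cI(H)$ for every indexing system by Definition~\ref{def indexing system}~(i). Hence the naive indexing system is the minimum of the poset of indexing systems, and the equivalence of Theorem~\ref{realization} provides a unique (up to homotopy) map $\cO_{\mathrm{naive}} \to \cP$ in the homotopy category of $N_\infty$ operads. Restricting the $\cP$-algebra structure along such a representative makes any $\cP$-algebra into a naive $N_\infty$ algebra, which is preserved by $L_{\cC}$ by the first assertion. There is no real obstacle here; the only subtlety is making sure that the indexed smash product over a one-point set is genuinely the identity functor (rather than just equivalent to it) so that the simplification in step one is honest.
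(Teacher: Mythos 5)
Your proof is correct and follows essentially the same route as the cited result: the corollary is the special case of Theorem~\ref{thm htpy preservation} for the naive indexing system, where the only transitive admissible $H$-set is $H/H$, so the relevant functor is $G_+ \wedge_H \Res^G_H(-) \cong G/H_+ \wedge (-)$ and preserves local equivalences because the localization is monoidal; the second assertion then follows from minimality of the naive indexing system via Theorem~\ref{realization}. Two small technical points worth flagging: Theorem~\ref{thm htpy preservation} assumes $\cP$ is $\Sigma$-cofibrant, so one should first pick a $\Sigma$-cofibrant naive $N_\infty$ operad (always possible by Remark~\ref{rem cofibrancy one}); and $\Sigma^\infty G/H_+$ is not actually cofibrant in the positive complete model structure, so strictly one should replace it by a cofibrant model before invoking the monoidal axioms of the local model structure -- the conclusion is unaffected, but the phrase ``$G/H_+$ is a cofibrant $G$-spectrum'' is not quite accurate, and the appeal to the proof of part~ii) of the preceding proposition is misleading since that argument is specific to the $\cC_x$-localization rather than a general monoidal Bousfield localization.
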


If the localization is given by inverting a single element $x \in \pi_0^G(\sphere)$, the
condition in Theorem \ref{thm htpy preservation} can be verified on homotopy groups, as we explain now. The
following results generalize \cite[Thm.~4.11]{HH:EqvarMultClosure} to the $N_\infty$ setting in the case of the ($p$-local) sphere
spectrum.

\begin{lemma} \label{lemma norm commutes with localization}
Fix a transitive $H$-set $H/K$ and an element $x \in \pi_0^G(\sphere)$. Then:
\begin{enumerate}[i)]
  \item The functor $\bigwedge_{H/K} \Res^G_K$ admits a left derived functor $\mathbb{L}(\bigwedge_{H/K}
  \Res^G_K)$ which commutes with sifted (hence with sequential) homotopy colimits.
  \item If $\sphere_c \to \sphere$ is a cofibrant replacement in the positive complete model structure on
  orthogonal $G$-spectra (see \cite[Prop.~B.63]{HHR}), then so is $\Loc{\sphere_c}{x} \to \Loc{\sphere}{x}$.
  Moreover, $\bigwedge_{H/K} \Res^G_K \sphere_c \to \bigwedge_{H/K} \Res^G_K \sphere$ and $\bigwedge_{H/K} \Res^G_K \Loc{\sphere_c}{x}
  \to \bigwedge_{H/K} \Res^G_K \Loc{\sphere}{x}$ are cofibrant replacements in orthogonal $H$-spectra.
  \item The map
  \[ \Res^G_H(\sphere) \simeq \bigwedge_{H/K} \Res^G_K (\sphere) \to \bigwedge_{H/K} \Res^G_K
  (\Loc{\sphere}{x})
  \]
  induced from the canonical map $\sphere \to \Loc{\sphere}{x}$ induces an equivalence
  \[ \Loc{\Res^G_H(\sphere)}{(N_K^H R^G_K (x))} \to \bigwedge_{H/K} \Res^G_K (\Loc{\sphere}{x}). \]
\end{enumerate}
\end{lemma}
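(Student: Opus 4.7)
For part (i), the plan is to invoke the positive complete model structure on orthogonal $G$-spectra, in which the indexed smash product $\bigwedge_{H/K}(-)$ is known to be a left Quillen functor (see the appendix of \cite{HHR}) and restriction $\Res^G_K$ preserves cofibrant objects. The left derived functor is thus given on a cofibrant replacement by $\mathbb{L}(\bigwedge_{H/K}\Res^G_K)(X) = \bigwedge_{H/K}\Res^G_K X_c$. For commutation with sifted homotopy colimits, I would use that the indexed smash product is, up to the $(H/K)$-action, a smash power, combined with the classical fact that sifted colimits commute with finite products (hence with finite smash powers of the same cofibrant object in the symmetric monoidal model category of spectra). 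One then checks this on cofibrant replacements, using that $\bigwedge_{H/K}\Res^G_K$ preserves equivalences between cofibrant objects.

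For part (ii), I would use the explicit mapping telescope model for the Bousfield localization: $\Loc{\sphere_c}{x}$ can be built as the sequential colimit of a telescope generated by a cofibrant representative of $x$, which is a sequential colimit of cofibrations between cofibrant orthogonal $G$-spectra and therefore cofibrant. The canonical comparison $\Loc{\sphere_c}{x} \to \Loc{\sphere}{x}$ is obtained by applying a levelwise weak equivalence to the telescope and is therefore itself a weak equivalence, exhibiting it as a cofibrant replacement. The corresponding statement about $\bigwedge_{H/K}\Res^G_K$ applied to these objects then follows from the two facts that $\Res^G_K$ preserves cofibrant $G$-spectra and that $\bigwedge_{H/K}(-)$ preserves cofibrations and equivalences between cofibrant objects in the positive complete model structure.

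For part (iii), I would combine (i) and (ii). Writing $\Loc{\sphere}{x}$ as the sequential hocolim of copies of $\sphere$ along multiplication by $x$, part (i) yields
\[ \bigwedge_{H/K}\Res^G_K \Loc{\sphere}{x} \simeq \hocolim\bigl(\Res^G_H \sphere \xrightarrow{f} \Res^G_H \sphere \xrightarrow{f} \cdots\bigr), \]
where $f := \bigwedge_{H/K}\Res^G_K(x)$ under the canonical identification $\bigwedge_{H/K}\Res^G_K\sphere \simeq \Res^G_H \sphere$ (which uses that the norm of the unit is the unit). The key identification is that the homotopy class $[f] \in \pi_0^H(\Res^G_H\sphere)$ equals $N_K^H R^G_K(x)$: this is essentially the definition of the norm on equivariant $\pi_0$ recalled at the end of Section~\ref{subsection:Tambara}, applied to the $K$-equivariant homotopy class $R^G_K(x)$. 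The resulting telescope is then, by definition, $\Loc{\Res^G_H(\sphere)}{N_K^H R^G_K(x)}$.

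The main obstacle I anticipate is the sifted colimit preservation in part (i), since the indexed smash product is not itself a left adjoint between categories of $G$-spectra; one needs to unravel the definition of $\bigwedge_{H/K}$ as a twisted smash power and verify that smash powers of cofibrant objects commute with sifted colimits. Once (i) is established, parts (ii) and (iii) are essentially formal consequences of the model-categorical bookkeeping together with the definition of the norm on equivariant homotopy groups.
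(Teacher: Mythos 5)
The approach is the same as the paper's (use left derived norm functor and its compatibility with filtered colimits, then identify the resulting telescope on homotopy groups), and parts (i) and (iii) are essentially in order, but part (ii) has a real gap. You write that the maps
\[
\textstyle\bigwedge_{H/K}\Res^G_K\sphere_c\ \to\ \bigwedge_{H/K}\Res^G_K\sphere
\qquad\text{and}\qquad
\textstyle\bigwedge_{H/K}\Res^G_K\Loc{\sphere_c}{x}\ \to\ \bigwedge_{H/K}\Res^G_K\Loc{\sphere}{x}
\]
are weak equivalences because the norm ``preserves equivalences between cofibrant objects.'' That fact (Ken Brown's lemma for a left Quillen functor) applies to a weak equivalence whose \emph{source and target} are both cofibrant. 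But in the positive complete model structure $\sphere$ is not cofibrant as a spectrum (that is precisely why one needs $\sphere_c\to\sphere$ in the first place), and neither is $\Loc{\sphere}{x}$, being a telescope built out of $\sphere$. So the targets here are non-cofibrant and the cited general principle does not apply. Your mapping telescope argument correctly handles the first of the three claims in (ii) --- that $\Loc{\sphere_c}{x}\to\Loc{\sphere}{x}$ is a cofibrant replacement of $G$-spectra --- but not the two norm claims.

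What is actually needed, and what the paper uses, is a statement tailored to commutative monoids: $\sphere$ is cofibrant as a commutative monoid (being initial), and \cite[Prop.~B.146]{HHR} guarantees that the norm applied to a cofibrant replacement (in spectra) of a cofibrant commutative monoid is still a weak equivalence. For the third claim one then needs the filtered colimit extension of this, which is \cite[Lemma~B.151]{HHR}, using that $\Loc{\sphere}{x}$ is a filtered colimit of copies of $\sphere$. Your sketch of part (i) is also vaguer than ideal --- the commutation of the (derived) norm with sifted homotopy colimits is not simply ``sifted colimits commute with finite products'' but requires the distributivity formula for indexed smash products, i.e.\ \cite[Prop.~B.104]{HHR} together with \cite[Prop.~A.27, A.53]{HHR} --- though there the idea is at least on the right track.
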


\begin{proof}
\emph{i):}
This is well-known for the restriction functor; for the norm it follows from \cite[Prop.~B.104]{HHR} combined
with \cite[Prop.~A.27, A.53]{HHR}.
\\
\emph{ii):}
Localization, restriction and norm preserve cofibrancy of equivariant spectra, as is well-known for the first
two of these functors and included in \cite[Prop.~B.89]{HHR} for the norm. In order to prove the three
statements that comprise ii), it now suffices to show that the three maps in question are equivalences. This
is easy to see for the map $\Loc{\sphere_c}{x} \to \Loc{\sphere}{x}$, so the first statement is clear. The
sphere is the initial commutative monoid, hence cofibrant as a commutative monoid, and so the second statement follows from \cite[Prop.~B.146]{HHR} applied
to the map $\Res^G_K(\sphere_c \to \sphere)$. The third statement then follows from the last part of
\cite[Lemma~B.151]{HHR}, using that $\Loc{\sphere}{x}$ is a filtered colimit of copies of~$\sphere$.
\\
\emph{iii):}
Consider the following diagram:
\begin{center}
$ \xymatrix@M=8pt@R=1.5pc{
	\hocolim ( \displaystyle \bigwedge_{H/K} \Res^G_K(\sphere_c) \ar[r]^{\bigwedge
	\Res(\cdot x)} \ar[d] &
	\displaystyle \bigwedge_{H/K} \Res^G_K(\sphere_c) \to \ldots ) \ar[r] &
	(\displaystyle \bigwedge_{H/K} \Res^G_K)(\Loc{\sphere_c}{x}) \ar[d] \\
	\hocolim (\displaystyle \bigwedge_{H/K} \Res^G_K(\sphere) \ar[r]^{\bigwedge
	\Res(\cdot x)} &
	\bigwedge_{H/K} \Res^G_K(\sphere) \to \ldots ) \ar@{-->}[r] &
	(\displaystyle \bigwedge_{H/K} \Res^G_K)(\Loc{\sphere}{x})
}$
\end{center}
The vertical maps are equivalences by part ii). The dashed horizontal map is induced by the map 
  \[ \Res^G_H(\sphere) \cong \bigwedge_{H/K} \Res^G_K (\sphere) \to \bigwedge_{H/K} \Res^G_K
  (\Loc{\sphere}{x}), \]
and similar for the solid horizontal one. The solid horizontal map is an equivalence by part i) and the fact
that left derived functors can be computed by passing to cofibrant replacements. Hence the dashed arrow is an
equivalence. It now suffices to see that the domain of the dashed arrow computes the localization
$\Loc{\Res^G_H(\sphere)}{(N_K^H R^G_K(x))}$. This holds because $N_K^H R^G_K(x)$ is given as the composite of
$\bigwedge_{H/K} \Res^G_K(x)$ with the norm map $\bigwedge_{H/K} \Res^G_K \sphere \to \Res^G_H(\sphere)$, and
the latter is an equivalence since the sphere is the monoidal unit.
\end{proof}

\begin{prop} \label{prop translation White HH integral}
Let $\cP$ be a $\Sigma$-cofibrant (see Remark~\ref{rem cofibrancy one}) $N_\infty$ operad and
fix $x \in \pi_0^G(\sphere)$.
Then $L_{\cC_x}$ preserves $\cP$-algebras in $G$-spectra if and only if for all $H \leq G$ and all
transitive admissible $H$-sets $H/K$, the element
$N_K^H R^G_K(x)$
divides a power of $R^G_H(x)$ in the ring $\pi_0^H(\sphere)$.
\end{prop}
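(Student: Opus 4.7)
The plan is to apply the Gutierrez--White criterion (Theorem \ref{thm htpy preservation}) to translate the $\cP$-algebra preservation question into a condition on the norm $\bigwedge_{H/K} \Res^G_K$, and then use Lemma \ref{lemma norm commutes with localization} to reduce this to an elementary algebraic fact about smashing localizations.

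First I would unpack the Gutierrez--White criterion using the induction/restriction adjunction. Since $L_{\cC_x}$ is smashing and $\Res$ preserves sequential homotopy colimits and smash products, a map $G_+ \wedge_H g$ of $G$-spectra is a $\cC_x$-local equivalence if and only if $g$ becomes an equivalence in $H$-spectra after smashing with $\Loc{(\Res^G_H \sphere)}{R^G_H(x)}$. Hence Theorem \ref{thm htpy preservation} translates to the statement that, for each admissible orbit $H/K$, the norm $\bigwedge_{H/K} \Res^G_K$ must send $\cC_x$-local equivalences of cofibrant $G$-spectra to $R^G_H(x)$-local equivalences of $H$-spectra.

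Next, for any such $f$, smashing with $\Loc{\sphere_c}{x}$ (a cofibrant replacement of $\Loc{\sphere}{x}$, by Lemma \ref{lemma norm commutes with localization} ii)) yields an equivalence $f \wedge \Loc{\sphere_c}{x}$ of cofibrant $G$-spectra. Applying $\bigwedge_{H/K} \Res^G_K$ --- which preserves equivalences between cofibrant objects by Lemma \ref{lemma norm commutes with localization} ii) and is strong symmetric monoidal --- and then invoking Lemma \ref{lemma norm commutes with localization} iii) to identify
\begin{equation*}
\bigwedge_{H/K} \Res^G_K \Loc{\sphere_c}{x} \simeq \Loc{(\Res^G_H \sphere)}{N_K^H R^G_K(x)},
\end{equation*}
shows that $\bigwedge_{H/K} \Res^G_K f$ is automatically an $N_K^H R^G_K(x)$-local equivalence of $H$-spectra, without any hypothesis on $x$. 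Thus the preservation condition is equivalent to every $N_K^H R^G_K(x)$-local equivalence of $H$-spectra being an $R^G_H(x)$-local equivalence, or equivalently, to the smashing localization $\Loc{(\Res^G_H \sphere)}{R^G_H(x)}$ being itself $N_K^H R^G_K(x)$-local.

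Finally, this last condition is standard algebra: a smashing localization $\Loc{(\Res^G_H \sphere)}{y}$ is $z$-local if and only if $z$ acts invertibly on its homotopy $\pi_*^H(\sphere)[y^{-1}]$, which happens precisely when $z$ divides a power of $y$ in $\pi_0^H(\sphere)$. The ``only if'' direction can alternatively be checked by testing the preservation hypothesis on the (cofibrantly replaced) unit map $\sphere \to \Loc{\sphere}{x}$: its image under the norm functor is, by Lemma \ref{lemma norm commutes with localization} iii), the canonical localization map at $N_K^H R^G_K(x)$, which is an $R^G_H(x)$-local equivalence exactly when $N_K^H R^G_K(x)$ is already a unit after inverting $R^G_H(x)$. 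The main point that requires care is the symmetric-monoidal manipulation $\bigwedge_{H/K} \Res^G_K(f \wedge \Loc{\sphere_c}{x}) \simeq \bigwedge_{H/K} \Res^G_K f \wedge \bigwedge_{H/K} \Res^G_K \Loc{\sphere_c}{x}$ in combination with the sequential colimit defining $\Loc{\sphere}{x}$, for which parts ii) and iii) of Lemma \ref{lemma norm commutes with localization} provide exactly the input needed.
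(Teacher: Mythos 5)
Your argument is correct and follows essentially the same route as the paper's proof: invoke the Gutierrez--White criterion, translate the condition on $G_+ \wedge_H \bigwedge_{H/K}\Res^G_K$ down to $H$-spectra via the projection formula, and use parts ii) and iii) of Lemma~\ref{lemma norm commutes with localization} to identify $\bigwedge_{H/K}\Res^G_K\Loc{\sphere}{x}$ with the $N_K^H R^G_K(x)$-localization of $\Res^G_H\sphere$, after which both directions reduce to the elementary divisibility statement. The only stylistic differences are that you package the ``if'' direction as a comparison of Bousfield classes of smashing localizations (the paper argues it more directly), you test the ``only if'' direction with the unit map $\sphere\to\Loc{\sphere}{x}$ where the paper tests with multiplication by $x$, and you attribute ``the norm preserves weak equivalences between cofibrant objects'' to Lemma~\ref{lemma norm commutes with localization}~ii) when the paper cites HHR Prop.~B.103 for that fact --- a harmless misattribution.
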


\begin{proof}
We have to show that for admissible such $H/K$, the functors $G_+ \wedge_H \bigwedge_{H/K} \Res^G_K (-)$
preserve $\cC_x$-local equivalences between cofibrant objects if and only if the elements
$N_K^H R^G_K(x)$ divide powers of $R^G_H(x)$. \\
If $\cC_x$-local equivalences are preserved, then in particular the map of $G$-spectra
\[ G_+ \wedge_H \bigwedge_{H/K} \Res^G_K (x) \colon
G_+ \wedge_H \bigwedge_{H/K} \Res^G_K (\sphere) \to G_+ \wedge_H \bigwedge_{H/K} \Res^G_K (\sphere)\]
is an $x$-local equivalence. Under the standard isomorphism $\pi_\ast^G(G_+ \wedge_H -) \cong \pi_\ast^H(-)$,
the induced map on $\pi_\ast^G(-)$ agrees with multiplication by the element $N_K^H R^G_K (x)$ and becomes a unit
after inverting $R^G_H(x)$, hence the element $N_K^H R^G_K (x)$ must divide a power of $R^G_H(x)$.
\\
Conversely, assume the division relation holds and let $f \colon X \to Y$ be a $\cC_x$-local equivalence between cofibrant objects.
We have to show that $G_+ \wedge_H \bigwedge_{H/K} \Res^G_K(f)$ is a $\cC_x$-local equivalence.
Since induction is a left Quillen functor, it suffices to show that the map $\bigwedge_{H/K} \Res^G_K (f)$
becomes an equivalence of $H$-spectra upon smashing with
$\Loc{\sphere}{R^G_H(x)}$.
We are going to show that it is an equivalence upon smashing with
$\Loc{\sphere}{N_K^H R^G_K (x)}$. Since the element $N_K^H R^G_K (x)$ divides a power of $R^G_H(x)$ by
assumption, the claim then follows. \\
The map $f \wedge \Loc{\sphere}{x}$ is an equivalence by assumption, so for any cofibrant replacement
$\sphere_c \to \sphere$, the map $f \wedge \Loc{\sphere_c}{x}$ is an equivalence between cofibrant
$G$-spectra. Then $\bigwedge_{H/K} \Res^G_K(f \wedge \Loc{\sphere_c}{x})$ is an equivalence of $H$-spectra by
\cite[Prop.~B.103]{HHR}. By part ii) of Lemma~\ref{lemma norm commutes with localization}, the map
$\bigwedge_{H/K} \Res^G_K(f \wedge \Loc{\sphere}{x})$ must be an equivalence. But the norm and restriction
functors commute with smash products, so
\[ (\bigwedge_{H/K} \Res^G_K)(f) \wedge (\bigwedge_{H/K} \Res^G_K)(\Loc{\sphere}{x}) \]
is an equivalence. Finally, part iii) of Lemma~\ref{lemma norm commutes with localization} implies that
\[ (\bigwedge_{H/K} \Res^G_K)(f) \wedge \Loc{\sphere}{(N_K^H R^G_K(x))} \]
is an equivalence, which finishes the proof.
\end{proof}

\begin{cor}[\cite{HH:EqvarMultClosure}, §4; \cite{bachmann-hoyois_v4}, Lemma~12.9] \label{cor inverting primes
htpy} Let $n \in \Z$, viewed as the element $n \cdot \left[ G/G \right] \in A(G)$. Then $\sphere \lbrack \frac{1}{n} \rbrack$ is a
complete $G$-$E_\infty$ ring spectrum.
\end{cor}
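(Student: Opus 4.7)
The plan is to invoke Proposition~\ref{prop translation White HH integral} directly: for a complete $N_\infty$ operad $\cP$ (i.e., a $G$-$E_\infty$ operad), every transitive $H$-set $H/K$ is admissible, so it suffices to check that for every nested pair $K \leq H \leq G$, the element $N_K^H R^G_K(n)$ divides a power of $R^G_H(n)$ in $\pi_0^H(\sphere) \cong A(H)$.

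First I would identify the restrictions. Since $n$ is by definition the class $n \cdot [G/G] \in A(G)$, and restriction of $G$-sets sends $G/G$ to $K/K$, we have $R^G_K(n) = n \cdot [K/K] \in A(K)$ and likewise $R^G_H(n) = n \cdot [H/H] \in A(H)$. Thus the two elements to compare are $N_K^H(n \cdot [K/K])$ and $n \cdot [H/H]$.

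Next I would compute the norm. Under the identification of norms in the Burnside ring with co-induction of finite $G$-sets (see Section~\ref{section:splitting} and Example~\ref{example TF}), $N_K^H(X) = \Map_K(H, X)$ as an $H$-set via precomposition. For $X$ a disjoint union of $n$ copies of the one-point $K$-set, an equivariant map $H \to X$ is determined by a function on the set of $K$-orbits of $H$, of which there are $[H:K]$, so $N_K^H(n \cdot [K/K]) = n^{[H:K]} \cdot [H/H]$ in $A(H)$.

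The verification is then immediate: the integer $n^{[H:K]}$ is a power of $n$, and hence divides a power of $n = R^G_H(n)$ in $A(H)$. Applying Proposition~\ref{prop translation White HH integral} (with $\cP$ a cofibrant model of a $G$-$E_\infty$ operad) concludes that $\cC_n$-localization preserves $G$-$E_\infty$ algebra structure, so $\sphere[\tfrac{1}{n}]$ inherits the structure of a complete $G$-$E_\infty$ ring spectrum. There is essentially no obstacle here --- the only mild subtlety is the identification of the multiplicative norm of a disjoint union of trivial $K$-orbits as $n^{[H:K]}$, but this is a standard computation of co-induction that requires no further input beyond the results already assembled in the preliminaries.
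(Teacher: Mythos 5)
Your overall strategy is the right one: invoke Proposition~\ref{prop translation White HH integral} with $\cP$ a complete $N_\infty$ operad and reduce to checking that $N_K^H R^G_K(n)$ divides a power of $R^G_H(n)$ in $A(H)$. The identification $R^G_K(n) = n\cdot[K/K]$ and $R^G_H(n) = n\cdot[H/H]$ is also correct. The problem lies in the computation of the norm.

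The claim $N_K^H(n\cdot[K/K]) = n^{[H:K]}\cdot[H/H]$ is false. You correctly observe that $\Map_K(H,X) \cong X^{K\backslash H}$ has $n^{[H:K]}$ elements, but you then silently treat it as a trivial $H$-set, which it is not: $H$ permutes the coordinates via its action on the right coset space $K\backslash H$, which is a transitive nontrivial $H$-set whenever $K \lneq H$. Already for $H = C_2$, $K = 1$, $n = 2$ one finds $N_1^{C_2}(2) = 2\,[C_2/C_2] + [C_2/1]$, not $4\,[C_2/C_2]$, so the cardinality count does not determine the class in $A(H)$. The inference from ``there are $n^{[H:K]}$ equivariant maps'' to ``$N_K^H(n) = n^{[H:K]}$'' is where the argument breaks.

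The division relation is nonetheless true, but it needs a different argument. Using Corollary~\ref{marks of a norm formula}, one computes $\phi^Q\bigl(N_K^H(n)\bigr) = \prod_{h\in Q\backslash H/K} \phi^{{}^{h^{-1}}\!Q\cap K}(n) = n^{|Q\backslash H/K|}$, so every mark of $N_K^H(n)$ is a positive power of $n$. From here one concludes that $N_K^H(n)$ is a unit in $A(H)[\tfrac1n]$, e.g.\ by observing that the determinant of multiplication by $N_K^H(n)$ on the free $\Z[\tfrac1n]$-module $A(H)[\tfrac1n]$ equals $\prod_Q n^{|Q\backslash H/K|}$, a unit in $\Z[\tfrac1n]$; or by noting that the maximal ideals of $A(H)$ are of the form $\{a : p\mid \phi^Q(a)\}$ and none of those surviving after inverting $n$ can contain an element all of whose marks are powers of $n$. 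Either way yields the required division and completes the proof; the step you labelled ``a standard computation of co-induction'' is precisely the part that requires care.
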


Consequently, for any collection $\fp$ of primes, $\sphere_{(\fp)} :=
\sphere \lbrack q^{-1}, \, q \notin \fp \rbrack$ is a complete $G$-$E_\infty$ ring spectrum, or
equivalently, a commutative monoid in $\SpG$.
One can now mimick the proof of Proposition~\ref{prop translation White HH integral} in the
$\fp$-local case.

\begin{prop} \label{prop translation White HH}
Let $\cP$ be a $\Sigma$-cofibrant $N_\infty$ operad. Fix $x \in \pi_0^G(\sphere_{(\fp)})$. Then
$L_{\cC_x}$ preserves $\cP$-algebras in $\fp$-local $G$-spectra if and only for all $H \leq G$ and
all transitive admissible $H$-sets $H/K$, the element
$N_K^H R^G_K (x)$
divides a power of $R^G_H(x)$ in the
ring $\pi_0^H(\sphere_{(\fp)})$.
\end{prop}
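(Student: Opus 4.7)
The plan is to mimic the proof of Proposition~\ref{prop translation White HH integral} verbatim, with the $G$-$E_\infty$ ring $\sphere_{(\fp)}$ (which is available by Corollary~\ref{cor inverting primes htpy}) playing the role previously played by $\sphere$. By Corollary~\ref{cor inverting primes htpy}, $\fp$-localization of $G$-spectra is itself a monoidal Bousfield localization of the positive complete model structure, and the resulting local model category is again monoidal; the norm and restriction functors descend to $\fp$-local $G$-spectra because they preserve the ideal generated by $\cC_n$ for $n\notin\fp$. In particular, Theorem~\ref{thm htpy preservation} applies in the $\fp$-local setting, reducing the assertion to showing that, for all admissible transitive $H$-sets $H/K$, the functor $G_+\wedge_H\bigwedge_{H/K}\Res^G_K(-)$ preserves $\cC_x$-local equivalences between cofibrant $\fp$-local $G$-spectra.

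For the ``only if'' direction, I would apply the preservation hypothesis to the self-map given by multiplication by $x$ on $\sphere_{(\fp)}$. Under the adjunction isomorphism $\pi_\ast^G(G_+\wedge_H-)\cong\pi_\ast^H(-)$, the induced homotopy self-map of $\Res^G_H(\sphere_{(\fp)})$ identifies with multiplication by $N_K^H R^G_K(x)$. Preservation of $\cC_x$-local equivalences forces this element to become a unit after inverting $R^G_H(x)$ in $\pi_0^H(\sphere_{(\fp)})$, which is exactly the claimed divisibility relation.

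For the ``if'' direction, I would start from a $\cC_x$-local equivalence $f\colon X\to Y$ between cofibrant $\fp$-local $G$-spectra and show that $\bigwedge_{H/K}\Res^G_K(f)$ becomes an equivalence after smashing with $\Loc{\sphere_{(\fp)}}{R^G_H(x)}$, which suffices since induction along $H\leq G$ is left Quillen. Following the integral argument, it is enough to show the equivalence after smashing with $\Loc{\sphere_{(\fp)}}{N_K^H R^G_K(x)}$ and then invoke the assumed divisibility to deduce the stronger statement. The first step needs the $\fp$-local analogue of Lemma~\ref{lemma norm commutes with localization}: cofibrant replacement, restriction, norm, and $x$-localization all commute up to equivalence with each other in the $\fp$-local positive complete model structure. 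All three parts of that lemma go through verbatim, using that the cited results from \cite{HHR} (Prop.~B.89, B.103, B.104, B.146, Lemma~B.151) are statements about modules over an arbitrary cofibrant commutative monoid; applying them over $\sphere_{(\fp)}$ rather than $\sphere$ yields what we need.

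The main obstacle is the bookkeeping in the $\fp$-local model category: ensuring that all the cofibrancy, preservation, and localization results carried by the positive complete model structure on orthogonal $G$-spectra genuinely descend along the monoidal Bousfield localization at $\sphere_{(\fp)}$. Once this is in place the algebraic core of the argument, i.e.\ the identification of the induced endomorphism with $N_K^H R^G_K(x)$ and the use of divisibility, is identical to the integral case and I would simply cite Proposition~\ref{prop translation White HH integral} for the details.
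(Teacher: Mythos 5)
Your proposal is correct and follows exactly the paper's intended route: the paper leaves this proof as the one-line remark that ``one can now mimick the proof of Proposition~\ref{prop translation White HH integral} in the $\fp$-local case,'' and you supply precisely that verification --- noting that $\fp$-localization is monoidal, that $\sphere_{(\fp)}$ is a (cofibrant) commutative monoid by Corollary~\ref{cor inverting primes htpy} so the HHR lemmas invoked in Lemma~\ref{lemma norm commutes with localization} apply over it verbatim, and then running the two directions of the divisibility argument unchanged. Your observation that the cited HHR results hold for an arbitrary cofibrant commutative monoid rather than just $\sphere$ is exactly the technical point that justifies the paper's deferral.
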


\subsection{Localization and incomplete Tambara functors} \label{subsection:preservation alg}
There are analogous preservation results for incomplete Tambara functors under localization. Given an
$\cI$-Tambara functor $\uR$ and an element $x \in \uR(G)$, consider the levelwise localization
$\Loc{\uR}{x}(H) := \Loc{\uR(H)}{R^G_H(x)}$. By \cite[Lemma~10.2]{strickland:tambara}, this agrees with the
sequential colimit along countably infinitely many copies of multiplication by $x$, taken in the category of
Mackey functors. Multiplication by $x$ is typically not a map of Tambara functors, and the levelwise
localization is usually not a Tambara functor. An alternative notion of localization which enjoys a universal
property in the category of Tambara functors is discussed in \cite[Section~5.4]{BH:ITF}. The two notions
agree if and only if the division relations from Proposition~\ref{prop translation White HH integral} are satisfied.

\begin{thm}[\cite{BH:ITF}, Thm.~5.26] \label{thm alg preservation}
Let $\uR$ be an $\cI$-Tambara functor structured by an indexing system $\cI$.
Let $x \in \uR(G)$. Then the orbit-wise localization $\Loc{\uR}{x}$ is a localization in the category of $\cI$-Tambara functors if
and only if for all admissible sets $H/K$ of $\cI$, the element $N^H_K R^G_K(x)$ divides a power
of $R^G_H(x)$.
\end{thm}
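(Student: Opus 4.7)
The plan is to prove both implications using the universal property of localization in the category of $\cI$-Tambara functors. The Mackey functor structure (restrictions and transfers) on the levelwise localization $\Loc{\uR}{x}$ is automatic, since transfers are additive and commute with the filtered colimit presentation of localization cited from \cite[Lemma~10.2]{strickland:tambara}. The entire question is thus whether the norms $N_f$ for admissible maps $f$ descend compatibly.

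For \emph{necessity}, suppose $\uR \to \Loc{\uR}{x}$ is a localization in the category of $\cI$-Tambara functors. Then for every admissible orbit $H/K$ of $\cI$, the descended norm $\tilde{N}_K^H$ exists and is a map of multiplicative monoids. Since $R^G_K(x)$ is a unit in $\Loc{\uR(K)}{R^G_K(x)}$, its image $\tilde{N}_K^H(R^G_K(x))$ must be a unit in $\Loc{\uR(H)}{R^G_H(x)}$. Compatibility with the original structure identifies this image with the class of $N_K^H R^G_K(x)$, and an element of $\uR(H)$ becomes a unit in $\Loc{\uR(H)}{R^G_H(x)}$ precisely when it divides a power of $R^G_H(x)$.

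For \emph{sufficiency}, assume the divisibility hypothesis. For each admissible orbit $H/K$, define
\[ \tilde{N}_K^H\!\left(\frac{y}{R^G_K(x)^n}\right) = \frac{N_K^H(y)}{(N_K^H R^G_K(x))^n}, \]
using that $(N_K^H R^G_K(x))^n$ is a unit in the target by hypothesis. Multiplicativity of $N_K^H$ and the usual description of the equivalence relation on fractions in terms of multiplication by powers of the denominator make this well-defined and multiplicative. More general norms $N_f$ for admissible $f$ are then determined by multiplicativity across orbits. Universality is levelwise: any map of $\cI$-Tambara functors $\uR \to \uS$ sending $x$ to a unit factors uniquely because each component $\uR(H) \to \uS(H)$ factors uniquely through $\Loc{\uR(H)}{R^G_H(x)}$ by the universal property of ring localization, and uniqueness forces these factorizations to commute with the inherited norms, transfers, and restrictions.

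The main obstacle is verifying that the $\tilde{N}_K^H$ together with the transfers and restrictions on $\Loc{\uR}{x}$ satisfy the axioms of a $\Set^G_\cI$-Tambara functor, i.e., the Tambara reciprocity relations encoded in the bispan category ${bispan}^G_{\Set^G_\cI}$. The relevant exponential diagrams for admissible maps consist entirely of admissible maps by the closure properties of indexing systems (Definition~\ref{def indexing system} and Theorem~\ref{iso IS D}), so each relation to be checked can be cleared of denominators by multiplying through by a sufficiently high power of $R^G_H(x)$ for the appropriate $H$ and reduced to the corresponding identity already known to hold in $\uR$. This verification is routine but notationally heavy; the conceptual content is captured entirely by the divisibility hypothesis.
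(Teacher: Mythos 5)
Your proof is correct and follows essentially the same route as the paper's: both directions hinge on the forced formula $\tilde{N}_K^H(a/x_K^n) = N(a)/N(x_K)^n$ and the observation that this is well-defined in $\Loc{\uR(H)}{R_H^G(x)}$ if and only if $N_K^H R_K^G(x)$ becomes a unit there, i.e.\ divides a power of $R_H^G(x)$. Your ``necessity'' argument via units-to-units under a multiplicative map is an equivalent rephrasing of the paper's appeal to well-definedness of the forced formula, and your levelwise universality argument matches the paper's.

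The one place you diverge is the verification that the descended norms actually make $\Loc{\uR}{x}$ an $\cI$-Tambara functor. You propose going back to the bispan definition and checking, for each exponential diagram in $\Set^G_{\cI}$, that the required identity holds after clearing denominators, noting that the closure properties of $\cI$ keep all maps in the diagram admissible. The paper instead avoids any explicit bispan bookkeeping by observing from the formula for $\tilde{N}$ that the Tambara reciprocity relations of \cite[Prop.~4.10, 4.11]{BH:ITF} pass from $\uR$ to $\Loc{\uR}{x}$ and then citing the recognition theorem \cite[Thm.~4.13]{BH:ITF}, which says that a Green ring equipped with norms satisfying exactly those reciprocity relations is automatically an $\cI$-Tambara functor. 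The two routes prove the same thing; the cited recognition theorem is essentially a packaging of the denominator-clearing you describe, so the paper's version is shorter precisely because it offloads the ``routine but notationally heavy'' step to a black box. If you did want to carry out your version in full, the phrase ``reduced to the corresponding identity already known to hold in $\uR$'' would need a small elaboration: elements of the localization are fractions with nontrivial denominators, so the cleared identity in $\uR$ does not literally coincide with the original axiom but is rather the axiom for $\uR$ post-multiplied by suitable restrictions and norms of powers of $x$, which then follows from the original axiom by multiplicativity of the structure maps involved. This is exactly what the reciprocity-relation approach handles in one stroke.
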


Blumberg and Hill do not give a detailed proof in \cite{BH:ITF}, but assert that the
proof strategy of \cite{HH:EqvarMultClosure} can be mimicked in the setting of incomplete Tambara functors. For completeness, we
include a (different and more elementary) proof here.

\begin{proof} 
As before, we decorate the structure maps of the localization with a tilde. In order to simplify notation, write $x_K := R^G_K(x)$
and similar for $H$. Fix an admissible set $H/K \in \cI(H)$. If $\tilde{N} := \tilde{N}_K^H$ exists, it must necessarily be given as
\numberwithin{equation}{section}
\setcounter{equation}{33}
\setcounter{definition}{34}
\begin{equation} \label{formula N tilde}
\tilde{N} \left( \frac{a}{x_K^n} \right) = \frac{N(a)}{N(x_K)^n}.
\end{equation}
This expression is well-defined if and only if $N(x_K) \in
\uR(H)$ becomes a unit after inverting $x_H$, i.e., if and only if it divides a power of $x_H$. \\
For the ``if'' direction, the argument above shows that $\Loc{\uR}{x}$ is a Green ring equipped with norms
$\tilde{N}_K^H$ for all admissible sets $H/K \in \cI(H)$ and all $H \leq G$.
From (\ref{formula N tilde}) we see that the reciprocity relations \cite[Prop.~4.10, Prop.~4.11]{BH:ITF}
satisfied by the norms of $\uR$ imply the reciprocity relations for the norms of $\Loc{\uR}{x}$. Thus by
\cite[Thm.~4.13]{BH:ITF}, $\Loc{\uR}{x}$ is a $\cI$-Tambara functor. Moreover, the canonical map $\uR \to
\Loc{\uR}{x}$ is a map of $\cI$-Tambara functors.
One readily verifies that the unique ring maps out of $\Loc{\uR(H)}{x_H}$ given by the universal properties for varying $H \leq
G$ assemble into a map of $\cI$-Tambara functors which exhibits $\Loc{\uR}{x}$ as the localization of $\uR$ at $x$. \\
The ``only if'' direction follows from the observation after (\ref{formula N tilde}) and the fact that
restriction is multiplicative.
\end{proof}

As before, this applies to localizations which invert natural numbers.

\begin{cor} \label{cor inverting primes alg}
Let $n \in \Z$, viewed as the element $n \cdot \left[ G/G \right] \in A(G)$. Then $\Abl \lbrack \frac{1}{n} \rbrack$ is a
complete Tambara functor.
\end{cor}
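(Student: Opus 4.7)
The plan is to deduce this directly from Theorem~\ref{thm alg preservation} by verifying the relevant divisibility relations for the element $n \in A(G)$ and the complete indexing system.

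First, recall from Example~\ref{example TF} that $A(-)$ is a Tambara functor in the unrestricted sense, i.e., an $\cI$-Tambara functor for $\cI$ the complete indexing system (every $H/K$ admissible). Hence Theorem~\ref{thm alg preservation} applies with $\uR = A(-)$ and $x = n \cdot [G/G]$, and reduces the claim to checking that for every pair of subgroups $K \leq H \leq G$, the element $N^H_K R^G_K(n)$ divides a power of $R^G_H(n)$ in $A(H)$.

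Second, I would compute these two elements explicitly. Restriction along $H \to G$ sends $[G/G]$ to $[H/H]$, so $R^G_H(n) = n \in A(H)$ and likewise $R^G_K(n) = n \in A(K)$. The norm $N^H_K$ is coinduction of finite sets, so $N^H_K(n \cdot [K/K])$ is represented by the $H$-set
\[
\Hom_{\Set}(H/K, \{1, \ldots, n\})
\]
with $H$ acting via its left translation action on $H/K$ and trivially on $\{1, \ldots, n\}$.

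Third, I would reduce divisibility in $A(H)$ to a statement about marks. The mark (fixed-point) homomorphism $\chi \colon A(H) \hookrightarrow \prod_{(L) \leq H} \Z$ is injective, and via Dress's classification of the prime ideals of $A(H)$ one knows that an element $a \in A(H)$ is a unit in $A(H)[\tfrac{1}{n}]$ if and only if each mark $\chi_L(a)$ is a unit in $\Z[\tfrac{1}{n}]$, i.e., divides a power of $n$. Thus the required divisibility is equivalent to checking that every mark of $N^H_K(n)$ is a power of $n$.

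Fourth, a direct count finishes the proof: an element of $\{1, \ldots, n\}^{H/K}$ is fixed by $L \leq H$ exactly when it is constant on $L$-orbits of $H/K$, so
\[
\chi_L\bigl(N^H_K(n)\bigr) = n^{|L \backslash (H/K)|},
\]
which is a power of $n$ as required.

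The only subtle step is the third one; everything else is a routine unwinding of definitions. I would either invoke Dress's classification of primes of $A(H)$ (already present in Section~\ref{section:splitting}) to pass between "divides a power of $n$" and "becomes a unit after inverting $n$," or equivalently exhibit an explicit inverse of $N^H_K(n)$ in $A(H)[\tfrac{1}{n}]$ using that the marks span a subgroup of finite index in $\prod_{(L)} \Z$ of index prime to suitable integers. The character route is the cleaner of the two given that this machinery is already in play.
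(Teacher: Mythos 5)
Your proof is correct, and it actually does more than the paper, which offers no argument for this corollary beyond the remark ``As before, this applies to localizations which invert natural numbers'' (implicitly pointing the reader to the cited literature behind Corollary~\ref{cor inverting primes htpy} and to Theorem~\ref{thm alg preservation}). You follow exactly the route the paper gestures at: reduce via Theorem~\ref{thm alg preservation} to the divisibility of $R^G_H(n)$ by $N^H_K R^G_K(n)$, then verify this on marks. Your fixed-point count $\phi^L(N^H_K(n)) = n^{|L \backslash (H/K)|}$ is the same computation the paper later records as Corollary~\ref{marks of a norm formula}, and it handles negative $n$ as well (or one may just reduce to $n \geq 1$ since inverting $n$ and $-n$ is the same). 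Two small remarks. First, the translation of ``divides a power of $n$ in $A(H)$'' into ``each mark divides a power of $n$'' is the one genuinely non-formal step; it does require Dress's description of the prime ideals of $A(H)$, which the paper never formally states (it only recalls the classification of idempotents in Theorem~\ref{Dress idempotents}), though it is a standard result from the same source, so the appeal is reasonable. Second, your proposed alternative route via the finite index of the mark homomorphism is not obviously cleaner, since that index need not be coprime to $n$; you are right that the prime-ideal argument is the one to use.
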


In particular, Question \ref{main Q alg} also makes sense for the local variants of the Burnside ring.
\section{Idempotent splittings of the Burnside ring and the $G$-sphere spectrum} \label{section:splitting}
We review Dress's classification of idempotents in the ($\fp$-local) Burnside ring and
describe the resulting product decompositions of the Burnside Mackey functor and the $G$-equivariant sphere spectrum. All of the statements in this section are easy consequences
of Dress's result and are probably well-known to the experts. The author does not claim any originality for these results.

\subsection{Idempotents in the Burnside ring} \label{subsection:dress}
Let $\fp$ be a collection of prime numbers and set $\Z_{(\fp)} := \Z \left[ p^{-1} \, | \, p \notin \fp \right]$. If $\fp$ is the
collection of all primes, nothing is inverted and hence $\Z_{(\fp)} = \Z$. If $\fp$ is the empty set, then all primes are inverted,
hence $\Z_{(\fp)} = \Q$.
For $\fp = \{ p \}$, we
obtain the usual $p$-localization $\Z_{(\fp)} = \Z_{(p)}$, which justifies the notation. Write $A(G)_{(\fp)} := A(G) \otimes_\Z
\Z_{(\fp)}$ for the $\fp$-local Burnside ring.

\begin{lemma}
Every finite group $G$ has a unique minimal normal subgroup $\Opi{G}$ such that the quotient $G/\Opi{G}$ is a solvable $\fp$-group,
i.e.~a solvable group whose order is only divisible by primes in $\fp$.
\end{lemma}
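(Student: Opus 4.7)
The plan is to identify $\Opi{G}$ as the intersection of the family
\[ \cN := \{ N \trianglelefteq G : G/N \text{ is a solvable } \fp\text{-group} \} \]
of all normal subgroups with the desired quotient property, and then to verify that this intersection is itself a member of $\cN$. Once this is done, uniqueness and minimality are formal: the intersection is by construction contained in every element of $\cN$.

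First I would note that $\cN$ is non-empty because $G \in \cN$ (the trivial group is vacuously a solvable $\fp$-group). The essential step is closure of $\cN$ under pairwise intersection. Given $N_1, N_2 \in \cN$, the diagonal induces an injective homomorphism
\[ G/(N_1 \cap N_2) \hookrightarrow G/N_1 \times G/N_2, \]
so it suffices to check that the class of finite solvable $\fp$-groups is closed under finite direct products and under passage to subgroups. Both facts are standard: solvability is inherited by subgroups and by direct products, while the set of prime divisors of the order is preserved under both operations (using $|H_1 \times H_2| = |H_1| \cdot |H_2|$ in one case and Lagrange's theorem in the other). Hence $G/(N_1 \cap N_2)$ is again a solvable $\fp$-group, so $N_1 \cap N_2 \in \cN$.

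Because $G$ is finite, $\cN$ is a finite collection, and iterating the pairwise closure shows that $\Opi{G} := \bigcap_{N \in \cN} N$ itself lies in $\cN$. By construction it is contained in every member of $\cN$, which makes it the unique minimal element and completes the proof. I do not anticipate any genuine obstacle: the whole argument reduces to the elementary observation that finite solvable $\fp$-groups form a class closed under subgroups and direct products, which is the only point that deserves to be spelled out carefully.
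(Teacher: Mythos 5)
Your proof is correct and is exactly the standard argument that the paper defers to by citing tom Dieck's Proposition~1 (the paper observes that the same proof goes through for an arbitrary set of primes $\fp$). The key closure properties you isolate --- that finite solvable $\fp$-groups are closed under passage to subgroups and under finite direct products --- together with the embedding $G/(N_1\cap N_2)\hookrightarrow G/N_1\times G/N_2$ and finiteness of $G$, are precisely what makes the intersection of all admissible normal subgroups again admissible, yielding the unique minimal element.
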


\begin{proof}
If $\fp$ is the collection of all primes, this is \cite[Prop.~1]{tomDieck:idempotent}; the same proof applies
for any choice of $\fp$.
\end{proof}

\begin{definition}
The group $\Opi{G} \leq G$ is called the $\fp$-\emph{residual subgroup} of $G$. A group $G$ is called $\fp$-\emph{perfect} if $G =
\Opi{G}$. If $\fp$ contains all primes, we will write $\Osolv{G} := \Opi{G}$ for the minimal normal subgroup with solvable quotient.
\end{definition}

\begin{rem}
The following statements are easily verified.
\begin{enumerate}[i)]
  \item For $\fp = \{ \mathrm{ all \, primes } \}$, the notion of a $\fp$-perfect group agrees with that of a
  perfect group (in the usual sense of group theory).
  \item For $\fp = \{ p \}$, the group $\Opi{G}$ is known to group theorists as the $p$-residual subgroup $O^p(G)$ and the condition
  that the quotient be solvable is redundant since every finite $p$-group is solvable.
  \item For $\fp = \emptyset$, every finite group $G$ is $\fp$-perfect because the trivial group is the only $\fp$-group.
\end{enumerate}
\end{rem} 

The following classification result is due to Dress. Recall that the assignment $S \mapsto |S^H|$ given by
taking the cardinality of the $H$-fixed points of a finite $G$-set $S$ extends to an injective ring homomorphism
\[ (\phi^H)_{(H) \leq G} \colon A(G) \to \prod_{(H) \leq G} \Z, \]
where the product is taken over conjugacy classes of subgroups $H \leq G$ \cite[(4), (5), Lemma 1]{dress:solvable}. The same is
true after inverting primes since $\Z_{(\fp)}$ has no torsion. The number $\phi^H(x)$ is called the \emph{mark} of $x$ at $H$.

\begin{thm}[\cite{dress:solvable}, Prop.~2] \label{Dress idempotents}
There is a bijection between the conjugacy classes of $\fp$-perfect subgroups $L \leq G$ and the set of primitive idempotent
elements of $\AGpi$ which sends $L$ to the element $e_L \in \AGpi$ whose mark $\phi^H(e_L)$ at a subgroup
$H \leq G$ is one if $\Opi{H} \sim L$ are conjugate in $G$, and zero otherwise.
\end{thm}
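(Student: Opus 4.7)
The plan is to deduce this via the mark homomorphism. Since $\Z_{(\fp)}$ is flat over $\Z$, the mark map
\[ \phi = (\phi^H)_{(H) \leq G} \colon \AGpi \hookrightarrow \prod_{(H) \leq G} \Z_{(\fp)} \]
remains injective after $\fp$-localization and becomes an isomorphism after further inverting $|G|$. Since $\Z_{(\fp)}$ has only the idempotents $0$ and $1$, the idempotents of the codomain are exactly the indicator functions $\mathbf{1}_S$ of subsets $S$ of the set of conjugacy classes of subgroups of $G$. Hence idempotents of $\AGpi$ correspond bijectively to those $S$ for which $\mathbf{1}_S$ lies in the image of $\phi$, and primitive idempotents correspond to the minimal non-empty such $S$.

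The heart of the proof is to identify these $S$. Dress's integral congruence description characterizes the image of $\phi$ by a list of congruences, one for each pair $(H, N)$ with $N \trianglelefteq H$ and $H/N$ cyclic of prime order $q$, asserting divisibility by $q$ of a certain linear combination of the marks at subgroups between $N$ and $H$. After tensoring with $\Z_{(\fp)}$, every congruence whose modulus $q$ lies outside $\fp$ becomes vacuous, while those with $q \in \fp$ persist. Applied to an indicator function $\mathbf{1}_S$, the surviving congruences force $\mathbf{1}_S(H) = \mathbf{1}_S(N)$ whenever $H/N$ is cyclic of prime order in $\fp$. Iterating along such chains and using that $\Opi{H}$ is by construction the smallest normal subgroup of $H$ whose quotient is a solvable $\fp$-group — equivalently, a tower of cyclic extensions by primes in $\fp$ — one concludes that $\mathbf{1}_S$ must be constant on the equivalence classes of the relation
\[ H \sim_{\fp} H' \iff \Opi{H} \text{ and } \Opi{H'} \text{ are conjugate in } G. \]

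For the converse direction, I would construct explicit preimages under $\phi$ of the indicator function of each such equivalence class, working inductively on the subgroup lattice: one uses the rational isomorphism $\phi \otimes \Q$ to guess the coefficients of a $\Q$-linear combination of transitive $G$-sets $[G/H]$ with the correct marks, and then verifies that the denominators appearing in those coefficients are actually units in $\Z_{(\fp)}$ (this is where Dress's congruences re-enter, now as a positive output). Each $\sim_{\fp}$-class contains a unique $\fp$-perfect representative up to $G$-conjugacy, namely the common value $L = \Opi{H}$ for $H$ in the class, so the bijection sends $L$ to $e_L := \phi^{-1}(\mathbf{1}_{S_L})$ with $S_L = \{(H) \mid \Opi{H} \sim_G L\}$, which is precisely the prescribed mark formula. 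The main obstacle is the congruence analysis together with the denominator control in the preimage construction; since this is essentially the content of \cite[Prop.~2]{dress:solvable} and only requires tracking which primes survive under $\fp$-localization, I would in practice simply appeal to Dress's original argument and verify that ``solvable $\fp$-group quotient'' matches the surviving congruences at primes in $\fp$.
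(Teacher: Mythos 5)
The paper itself offers no proof of this theorem: it is cited verbatim from Dress's Proposition~2 in \cite{dress:solvable}, just as you ultimately defer to. Your sketch — injectivity of the mark homomorphism after flat base change, idempotents of the product as indicator functions, Dress's congruences forcing constancy along prime-order cyclic extensions with prime in $\fp$ and hence along $\Opi{(-)}$-classes, and the inverse construction with denominator control — correctly recapitulates Dress's original argument, so the two treatments coincide.
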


\begin{rem}
It follows immediately that $G$ is solvable if and only if $A(G)$ does not have any non-trivial idempotents. This originally
motivated Dress's work in \cite{dress:solvable}.
\end{rem}

\begin{rem}
Note that if $p$ does not divide the order $G$, then all subgroups $L \leq G$ are $p$-perfect, hence all idempotents of $A(G)
\otimes \Q$ are contained in the subring $A(G)_{(p)}$. For the other extreme case, if $G$ is a $p$-group, then only the trivial
subgroup is $p$-perfect, hence the only idempotents in $A(G)_{(p)}$ are zero and one.
\end{rem}

\subsection{Idempotent splittings of the Burnside ring} \label{subsection:splitting Burnside}
For any commutative ring, decomposing $1$ into a finite sum of idempotents yields a product
decomposition.

\begin{cor} \label{cor splitting A(G)}
The product of the canonical maps to the localizations
\[ \AGpi \to \prod_{(L) \leq G} \Loc{\AGpi}{e_L} \]
is an isomorphism of rings. Here, the product is taken over conjugacy classes of perfect subgroups of $L \leq
G$.
\end{cor}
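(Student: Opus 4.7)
The plan is to reduce the statement to the standard fact that any commutative ring equipped with a finite complete system of orthogonal idempotents splits as a product, and then to identify each factor with the claimed localization. The work is essentially bookkeeping on top of Theorem~\ref{Dress idempotents}.

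First I would collect the relevant properties of the elements $e_L$. By Dress's classification, the $e_L$ for $(L)$ ranging over conjugacy classes of $\fp$-perfect subgroups are the \emph{primitive} idempotents of $A(G)_{(\fp)}$. Using the mark homomorphism $(\phi^H)_{(H)\leq G} \colon A(G)_{(\fp)} \hookrightarrow \prod_{(H)} \Z_{(\fp)}$, which is injective and a ring map, it is enough to verify the relations $e_L \cdot e_{L'} = 0$ for $(L) \neq (L')$ and $\sum_{(L)} e_L = 1$ after applying each $\phi^H$. Both are immediate from the mark formula in Theorem~\ref{Dress idempotents}: for any subgroup $H \leq G$, the residual subgroup $\Opi{H}$ is conjugate to exactly one $\fp$-perfect $L$, so precisely one $\phi^H(e_L)$ equals $1$ and the others vanish.

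Next I would invoke the standard decomposition: given a commutative ring $R$ and pairwise orthogonal idempotents $e_1, \dots, e_n$ with $\sum e_i = 1$, the map $R \to \prod_i e_i R$ sending $r \mapsto (e_i r)_i$ is a ring isomorphism, where each $e_i R$ is regarded as a ring with unit $e_i$. Applied here with $R = A(G)_{(\fp)}$ and the $e_L$, this gives a canonical isomorphism $A(G)_{(\fp)} \cong \prod_{(L)} e_L A(G)_{(\fp)}$.

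Finally I would identify each factor $e_L A(G)_{(\fp)}$ with the localization $\Loc{A(G)_{(\fp)}}{e_L}$. For any idempotent $e$ in a commutative ring $R$, the multiplication-by-$e$ map is itself idempotent, so the sequential colimit
\[
\Loc{R}{e} \;=\; \colim\bigl(R \xrightarrow{e} R \xrightarrow{e} \cdots\bigr)
\]
is computed after a single step: it is the image $eR$, with unit $e$, and the canonical map $R \to \Loc{R}{e}$ agrees with multiplication by $e$ composed with this identification (equivalently, it is the quotient by the ideal $(1-e)$, since $e$ being a unit in the localization forces $1-e = 0$). Assembling these identifications for each $L$ shows that the product of the canonical maps is precisely the isomorphism constructed in the previous paragraph. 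The only step that requires any care is checking that the two natural candidates for $e_L$-localization — the categorical one of inverting $e_L$ and the concrete one of projecting onto $e_L R$ — coincide; once the idempotence of $e_L$ is used, this is immediate, so there is no real obstacle to the argument.
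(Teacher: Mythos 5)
Your proposal is correct and matches the paper's intent: the paper gives no detailed proof here, simply noting that decomposing $1$ into a finite sum of orthogonal idempotents yields a product decomposition, which is exactly what you verify (via marks) and then combine with the standard identification of $e_L \cdot A(G)_{(\fp)}$ with the localization $\Loc{A(G)_{(\fp)}}{e_L}$ — an identification the paper itself records later in Remark~\ref{tilde notation}. Your write-up is a fully spelled-out version of the same argument, not a different route.
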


One readily verifies that the statement above can be upgraded to a splitting of Green rings, where for any subgroup $H \leq G$, we
view $e_L$ as an element of $\AGpi$ via the restriction map $R_H^G \colon \AGpi \to A(H)_{(\fp)}$.

\begin{notation}
For brevity, we will write $\Loc{\Ablpi}{e_L}$ for the levelwise localization $\Loc{\Ablpi}{R_{(-)}^G(e_L)}$, see Section
\ref{subsection:preservation alg}.
\end{notation}

\begin{prop} \label{Dress splitting}
The product of the canonical maps to the localizations
\[ \Ablpi \to \prod_{(L) \leq G} \Loc{\Ablpi}{e_L} \]
is an isomorphism of Green rings.
\end{prop}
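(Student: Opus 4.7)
The plan is to upgrade the ring-level idempotent decomposition of Corollary~\ref{cor splitting A(G)} to one that is compatible with all restrictions and transfers of the Burnside Mackey functor.

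First, I would promote the decomposition to a levelwise ring isomorphism for each subgroup $H \leq G$. Applying the ring homomorphism $R_H^G \colon A(G)_{(\fp)} \to A(H)_{(\fp)}$ to the identity $1 = \sum_{(L)} e_L$ in $A(G)_{(\fp)}$ yields a decomposition $1 = \sum_{(L)} R_H^G(e_L)$ of the unit in $A(H)_{(\fp)}$. Since ring maps preserve both idempotency and orthogonality, the elements $R_H^G(e_L)$ form a complete family of pairwise orthogonal idempotents in $A(H)_{(\fp)}$, indexed by conjugacy classes of $\fp$-perfect subgroups of $G$. The standard fact that a complete orthogonal idempotent decomposition in a commutative ring splits the ring as a product of localizations then gives a ring isomorphism
\[ A(H)_{(\fp)} \xrightarrow{\sim} \prod_{(L) \leq G} \Loc{A(H)_{(\fp)}}{R_H^G(e_L)}. \]
Note that the individual factors may be zero, and the restricted idempotents may fail to be primitive in $A(H)_{(\fp)}$; but this does not affect the argument.

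Next, I would verify that these levelwise isomorphisms assemble into a map of Mackey functors. Compatibility with restriction is automatic: for $K \leq H$, the restriction $R_K^H$ is a ring map, and the localizations on both sides are formed by inverting the same element $e_L \in \AGpi$ pulled back appropriately, so the square of localization maps commutes. Compatibility with transfer is the only point that requires a small argument, and it is supplied by Frobenius reciprocity, which is part of the defining structure of a Green ring: for $a \in A(H)_{(\fp)}$ and $b \in A(K)_{(\fp)}$,
\[ T_K^H(R_K^H(a) \cdot b) = a \cdot T_K^H(b). \]
Setting $a = R_H^G(e_L)$ and using that $R_K^H R_H^G = R_K^G$ on idempotents shows that $T_K^H$ is linear with respect to multiplication by (the relevant restrictions of) $e_L$. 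Hence $T_K^H$ descends to a map between the localizations and, after projecting onto the $L$-th factor, respects the product decomposition.

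There is no substantial obstacle here; the only point meriting explicit mention is the use of Frobenius reciprocity to extend the ring-theoretic splitting to transfers, since one might naively expect the lack of multiplicativity of transfers to cause trouble. Putting the previous two paragraphs together, the canonical levelwise localization maps assemble into the desired isomorphism of Green rings, which proves the proposition.
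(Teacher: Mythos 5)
Your proposal is correct and matches the paper's intended argument. The paper defers the verification to Remark~3.13, where the restriction and transfer maps of $\Loc{\Ablpi}{e_L}$ are written explicitly as $\tilde{R}^H_K(R^G_H(e)\cdot a) = R^G_K(e)\cdot R^H_K(a)$ and $\tilde{T}_K^H(R^G_K(e)\cdot b) = R^G_H(e)\cdot T_K^H(b)$; the well-definedness of the latter is exactly the Frobenius reciprocity (projection formula) step you isolate, and the levelwise ring decomposition via the restricted orthogonal idempotents $R^G_H(e_L)$ is the same starting point.
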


The left hand side is even a Tambara functor. Question \ref{main Q alg} asks whether the factors on the right
hand side inherit norms from $\Ablpi$, and whether the splitting preserves these norms.

\begin{rem} \label{rem non-zero localization}
The value of the Green ring $\Loc{\Ablpi}{e_L}$ at a subgroup $K \leq G$ is non-zero
if and only if $L$ is subconjugate to $K$, as follows from the description of $e_L$ in terms of
marks in Theorem \ref{Dress idempotents}.
\end{rem}

\begin{rem} \label{tilde notation}
Note that for any idempotent $e \in \AGpi$, the localization $\Loc{\AGpi}{e}$ is canonically isomorphic to the submodule $e \cdot
\AGpi$. The restriction maps $\tilde{R}_K^H$ and transfer maps $\tilde{T}_K^H$ of $\Loc{\Ablpi}{e}$ are given by the formulae \\
\[ \tilde{R}^H_K(R^G_H(e) \cdot a) := R^G_K(e) \cdot R^H_K(a) \]
and
\[ \tilde{T}_K^H(R^G_K(e) \cdot b) := R^G_H(e) \cdot T_K^H(b) \]
for all $a \in A(H)$ and $b \in A(K)$, where $R$ and $T$ denote the restrictions and transfers of $\Ablpi$
(cf.~\cite[Thm.~V.4.6]{LMS}). The equations that go into verifying Proposition~\ref{Dress splitting} can
easily be read off from these formulae.
The analogous $\fp$-local statements hold.
\end{rem}

\begin{rem}
We warn the reader that even though any restriction of $e_L$ to a proper subgroup $H \leq G$ is still an idempotent, it will in
general not be primitive. More precisely, it splits as an $n$-fold sum of primitive idempotents of $A(H)_{(\fp)}$ where $n$ is the
number of $H$-conjugacy classes contained in the $G$-conjugacy class of $L$. \nodetails{}{\redfont{LATER: Check this!}}
\end{rem}

\subsection{Idempotent splittings of the sphere spectrum} \label{subsection:splitting sphere}
We now turn to the homotopical consequences of the above splitting.
First recall the following theorem which goes back to Segal \cite[Cor. of Prop.~1]{segal:ESHT}.

\begin{thm}[See \cite{schwede:lecture_ESHT_2018}, Thm.~6.14, Ex.~10.11] \label{segals theorem}
For all $H \leq G$, there is a ring isomorphism $A(H) \to \pi_0^H(\sphere)$ which sends the
class represented by $H/K$ to the element $T_K^H(\id)$.
For varying $H$, these maps assemble into an isomorphism of Tambara functors
$\Abl \cong \underline{\pi}_0(\sphere)$.
\end{thm}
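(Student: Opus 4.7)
The plan is to define, for each $H \leq G$, a homomorphism $\phi_H \colon A(H) \to \pi_0^H(\sphere)$ by $\Z$-linear extension of $\phi_H([H/K]) := T_K^H(\id)$ on the canonical $\Z$-basis of orbit classes, and then verify in turn that (i) $\phi_H$ is a ring isomorphism for each $H$, (ii) the maps $\phi_H$ commute with restrictions and transfers, and (iii) they commute with norms. Multiplicativity of $\phi_H$ reduces to the double-coset identity
\[ T_K^H(\id) \cdot T_L^H(\id) = \sum_{KgL \in K \backslash H / L} T_{K \cap gLg^{-1}}^H(\id), \]
which follows from the standard double-coset formula for $R_K^H T_L^H$ together with Frobenius reciprocity, and which mirrors the decomposition of $(H/K) \times (H/L)$ into $H$-orbits on the set-theoretic side.

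To establish that each $\phi_H$ is bijective, I would appeal to the tom Dieck splitting
\[ \pi_0^H(\sphere) \;\cong\; \bigoplus_{(K) \leq H} \pi_0^s\bigl(BW_H(K)_+\bigr) \;\cong\; \bigoplus_{(K) \leq H} \Z, \]
indexed by conjugacy classes of subgroups $K \leq H$ with $W_H(K) = N_H(K)/K$, exhibiting $\pi_0^H(\sphere)$ as a free abelian group of the same finite rank as $A(H)$. Computing the geometric-fixed-point component of $\phi_H([H/K])$ at $L \leq H$ yields the mark $|(H/K)^L|$, which is nonzero exactly when $L$ is subconjugate to $K$. The resulting change-of-basis matrix is unitriangular with respect to the subconjugacy partial order on conjugacy classes, so $\phi_H$ is an isomorphism.

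Compatibility with restrictions $R^H_L$ follows from the double-coset formula $R^H_L T^H_K(\id) = \sum_{LgK} T^L_{L \cap gKg^{-1}}(\id)$, matching the orbit decomposition of $\Res^H_L(H/K)$ as an $L$-set. Compatibility with transfers is the transitivity $T^H_L T^L_K = T^H_K$, matching $\Ind_L^H(L/K) = H/K$. For the norms, I would use that $\sphere$ is the initial commutative monoid in $\SpG$ and that $N_K^H$ on $\pi_0$ arises from the counit of the adjunction $\bigwedge_{H/K}(-) \dashv \Res^H_K(-)$ recalled in Section~\ref{subsection:operads}; since the norm on $A(-)$ is co-induction of $G$-sets (Example~\ref{example TF}), it suffices by additivity of both norms over disjoint unions to check $\phi_H(N_K^H[K/J]) = N_K^H \phi_K([K/J])$ on the generators $[K/J]$, which unwinds to identifying $\CoInd_K^H(K/J)$ with the $H$-set of sections of $H/J \to H/K$. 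I expect step (iii)—matching the homotopical norm on a transfer class with co-induction of $G$-sets—to be the principal obstacle, since it requires tracking the counit of the indexed-smash-product adjunction on suspension spectra of finite sets carefully; once this is in hand, the rest is either direct or reduces to the classical ring-isomorphism statement.
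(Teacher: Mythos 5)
The paper does not prove this theorem; it cites it as known (Segal, and Schwede's lecture notes), and the subsequent Remark~3.11 records the observation that the isomorphism of Tambara functors is forced once one knows the ring isomorphism is unital and transfer-compatible. So there is no internal proof to compare against, and your attempt has to be judged on its own terms.

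Your overall strategy (orbit basis, double-coset formula for the ring structure, tom Dieck splitting for rank, marks for injectivity, Mackey/Tambara axioms checked on generators) is the standard direct route and is sound in outline. But there is a genuine gap in the bijectivity step. The change-of-basis matrix you describe is the table of marks, and its diagonal entries are $|(H/K)^K| = |W_H(K)| = |N_H(K)/K|$, which are positive integers but not $1$ in general. So the matrix is triangular with nonzero diagonal, not unitriangular; this gives injectivity of $\phi_H$ but not surjectivity, and ``same finite rank plus injective'' does not imply isomorphism over $\Z$ (consider $2\colon \Z \to \Z$). To close this you need one more input: that the tom Dieck splitting summand indexed by $(K)$ is generated precisely by the transfer class $T^H_K(\id)$ (equivalently, that $\pi_0^H(\sphere)$ is \emph{generated} by transfers of units, which is the content of the splitting construction or of the equivariant framed-cobordism description). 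With that, the $T^H_K(\id)$ form a $\Z$-basis and $\phi_H$ is an isomorphism.

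For norm compatibility, you correctly identify it as the hard step, but you leave it incomplete. A cleaner route, and the one the paper's Remark~3.11 is implicitly gesturing at, is to sidestep the computation entirely: the Burnside Tambara functor $\Abl$ is the \emph{initial} Tambara functor, and $\underline{\pi}_0(\sphere)$ is a Tambara functor by Theorem~\ref{Brun relative}; the unique Tambara functor map $\Abl \to \underline{\pi}_0(\sphere)$ is automatically compatible with all norms and restricts to the unique unital transfer-preserving map of Green rings, which by Segal's theorem is the levelwise isomorphism $\phi_H$. This replaces your ``unwind the counit of the indexed smash adjunction on suspension spectra'' step with a formal initiality argument, at the cost of invoking the initiality of $\Abl$ among Tambara functors.
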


\begin{rem}
The isomorphism $\Abl \cong \underline{\pi}_0(\sphere)$ is completely determined by the
requirements that it be unital and respect transfers.
\end{rem}

Dress's classification of idempotent elements then immediately implies the next statement.

\begin{prop} \label{htpy Dress splitting}
The product of the canonical maps to the localizations is a weak equivalence of $\fp$-local
$G$-spectra
\[ \sphere_{(\fp)} \simeq \prod_{(L) \leq G} \Loc{\sphere_{(\fp)}}{e_L} \]
where the product is taken over conjugacy classes of $\fp$-perfect subgroups.
For any naive $N_\infty$-operad $\cO$, i.e., any $N_\infty$ operad whose homotopy type is
the unique minimal element in the poset of $N_\infty$ ring structures, this is a splitting of
$\mathcal{O}$-algebras (up to equivalence of $G$-spectra).
\end{prop}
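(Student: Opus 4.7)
The plan is to prove the statement in two steps: first show the map is a weak equivalence of underlying $G$-spectra, then upgrade the conclusion to $\cO$-algebras using the preservation result for naive $N_\infty$ algebras under monoidal Bousfield localization.

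For the underlying equivalence, I would pass to equivariant homotopy groups. By Segal's theorem (Theorem~\ref{segals theorem}) and the fact that the Burnside Mackey functor acts on every $\underline{\pi}_n(\sphere_{(\fp)})$ through the ring structure of $\sphere_{(\fp)}$, the idempotents $e_L$ give orthogonal decompositions of each $\pi_n^H(\sphere_{(\fp)})$. Since finite products of $G$-spectra agree with finite wedges and the sequential homotopy colimit defining the localization commutes with $\pi_n^H(-)$, one identifies
\[ \pi_n^H(\Loc{\sphere_{(\fp)}}{e_L}) \cong R^G_H(e_L) \cdot \pi_n^H(\sphere_{(\fp)}). \]
By Theorem~\ref{Dress idempotents}, the primitive idempotents $\{e_L\}_{(L)}$ indexed by conjugacy classes of $\fp$-perfect subgroups form a complete system of orthogonal idempotents in $\AGpi$ summing to $1$, and the same is true after applying the ring homomorphism $R^G_H$ for any $H \leq G$. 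Hence the canonical map induces an isomorphism on $\pi_n^H$ for all $H$ and all $n$, and is therefore a weak equivalence.

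To upgrade to $\cO$-algebras, I would invoke Corollary~\ref{cor naive preservation}. By Corollary~\ref{cor inverting primes htpy}, $\sphere_{(\fp)}$ is a complete $G$-$E_\infty$ ring spectrum and in particular an algebra over any naive $N_\infty$ operad $\cO$. Each localization $\Loc{\sphere_{(\fp)}}{e_L}$ is obtained by inverting a class $e_L \in \pi_0^G(\sphere_{(\fp)})$, which gives a monoidal Bousfield localization in the sense of Section~\ref{subsection:preservation htpy}. Corollary~\ref{cor naive preservation} then yields $\cO$-algebra models $\tilde{E}_L$ equivalent as $G$-spectra to $\Loc{\sphere_{(\fp)}}{e_L}$ together with $\cO$-algebra maps $\sphere_{(\fp)} \to \tilde{E}_L$ realizing the localization. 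Since the forgetful functor from $\cO$-algebras to $G$-spectra preserves limits, the product $\prod_{(L)} \tilde{E}_L$ is an $\cO$-algebra and the induced map $\sphere_{(\fp)} \to \prod_{(L)} \tilde{E}_L$ is a map of $\cO$-algebras whose underlying map is the weak equivalence of the first step.

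The only real obstacle is bookkeeping: Corollary~\ref{cor naive preservation} produces the $\cO$-algebra structures only after replacing up to equivalence of $G$-spectra, which is exactly why the proposition carries the parenthetical ``up to equivalence of $G$-spectra.'' The genuine content, namely the orthogonal idempotent decomposition on each homotopy group plus the preservation of naive $N_\infty$ structure under monoidal Bousfield localization, is entirely encapsulated in Theorem~\ref{Dress idempotents}, Theorem~\ref{segals theorem}, and Corollary~\ref{cor naive preservation}, so no further analysis of norm maps or higher coherences is required at this stage.
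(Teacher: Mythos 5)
Your proof is correct and takes essentially the same approach as the paper: the additive equivalence follows from the complete orthogonal idempotent decomposition of $\AGpi$ detected on equivariant homotopy groups via Segal's theorem, and the $\cO$-algebra structure is supplied by Corollary~\ref{cor naive preservation}. Your write-up simply spells out the routine bookkeeping that the paper's terse proof leaves implicit.
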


The $\cO$-action on $\sphere_{(\fp)}$ in the last part of Proposition~\ref{htpy Dress splitting} is the
canonical action that factors through the commutative operad, using that $\sphere_{(\fp)}$ admits the
structure of a commutative monoid in orthogonal $G$-spectra.

\begin{proof}
The fact that the $e_L$ form a complete set of orthogonal idempotents implies that the map induces
isomorphisms on all equivariant homotopy groups. Moreover, the canonical maps
$\sphere_{(\fp)} \to \Loc{\sphere_{(\fp)}}{e_L}$ are all maps of $\cO$-algebras, as
follows from the fact that localization always preserves naive $N_\infty$ rings, see Corollary~\ref{cor naive preservation}.
\end{proof}

\begin{rem} \label{rem tom Dieck}
Note that the splitting of Prop.~\ref{htpy Dress splitting} induces a splitting of $\pi_\ast^G(\sphere)$ that
is different from the tom Dieck splitting \cite[Satz~2]{tomDieck:orbittypenII}: the idempotent splitting of
$\pi_0^G(\sphere) \cong A(G)$ is the decomposition into simple ideals, while tom Dieck's splitting
corresponds to the basis of $A(G)$ given by the classes of the $G$-sets $G/H$, where $H$ runs over all
conjugacy classes of subgroups of $G$. In particular, the tom Dieck splitting is not multiplicative.
\end{rem}

\begin{rem}
It follows from Remark~\ref{rem non-zero localization} that the underlying $H$-spectrum of
$\Loc{\sphere_{(\fp)}}{e_L}$ is non-trivial if and only if $L$ is subconjugate in $G$ to $H$. In particular,
the underlying non-equivariant spectrum of $\Loc{\sphere_{(\fp)}}{e_L}$ is trivial for $L \neq 1$ and is the
non-equivariant sphere spectrum for $L = 1$. 
\end{rem}

Question \ref{main Q htpy} asks about the maximal $N_\infty$ ring structures on the localizations
$\Loc{\sphere_{(\fp)}}{e_L}$, and about the maximal $N_\infty$ ring structure preserved by the
splitting. The answer is given in Corollary~\ref{local cor htpy} (Corollary~\ref{cor htpy}) and
Corollary~\ref{local cor htpy splitting} (Corollary~\ref{cor htpy splitting}).
\section{Norms in the idempotent splittings} \label{section:results}
We state and prove the results which answer Question~\ref{main Q alg} and Question~\ref{main Q htpy}, including the local variants
where any collection of primes $\fp$ is inverted.

\subsection{Theorem A and consequences} \label{subsection:group theory}
The main combinatorial result of this paper is the following version of Theorem~\ref{Thm A}, stated in
full $\fp$-local generality:

\begin{thm} \label{local Thm A}
Let $\fp$ be a collection of primes. Let $L \leq G$ be a $\fp$-perfect subgroup and let $e_L \in
\AGpi$ be the corresponding primitive idempotent under the bijection from Theorem~\ref{Dress
idempotents}. Fix subgroups $K \leq H \leq G$. Then
the norm map $N_K^H \colon A(K)_{(\fp)} \to A(H)_{(\fp)}$ descends to a well-defined map of
multiplicative monoids
\[ \tilde{N}_K^H \colon \Loc{A(K)_{(\fp)}}{e_L} \to \Loc{A(H)_{(\fp)}}{e_L} \]
if and only if the following holds:
\begin{enumerate}
  \item[($\star$)] Whenever $L' \leq H$ is conjugate in $G$ to $L$, then $L'$ is contained in $K$.
\end{enumerate} 
\end{thm}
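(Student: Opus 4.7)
The plan is to reduce the statement to a divisibility condition in $A(H)_{(\fp)}$, translate it into a condition on marks, and analyze the resulting combinatorial criterion using Dress's description of $e_L$ together with a simple order argument about $\fp$-residuals.

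First, by Theorem~\ref{thm alg preservation} applied to the Tambara functor $A(-)_{(\fp)}$ and the idempotent $e_L$, the norm $\tilde{N}_K^H$ exists if and only if $y := N_K^H R^G_K(e_L)$ divides a power of $R^G_H(e_L)$ in $A(H)_{(\fp)}$. Since $e_L$ is idempotent, every positive power of $R^G_H(e_L)$ equals $R^G_H(e_L)$ itself, so the condition reduces to $y \mid R^G_H(e_L)$. Next, I would compute the marks of $y$ via Tambara's multiplicative formula
\[
\phi^{H'}(y) \;=\; \prod_{[h]\in H'\backslash H/K} \phi^{h^{-1}H'h \cap K}(e_L),
\]
and use Theorem~\ref{Dress idempotents} to observe that each factor is $0$ or $1$, equal to $1$ exactly when $\Opi{h^{-1}H'h \cap K}$ is $G$-conjugate to $L$. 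In particular every $\phi^{H'}(y)$ lies in $\{0,1\}$, so $y$ is itself an idempotent of $A(H)_{(\fp)}$. The divisibility $y \mid R^G_H(e_L)$ is therefore equivalent to $\phi^{H'}(y) \geq \phi^{H'}(R^G_H(e_L))$ for all $H' \leq H$, which by Dress's formula means: for every $H' \leq H$ with $\Opi{H'} \sim_G L$, the Tambara product above equals $1$.

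The key combinatorial observation is that if $M \leq N$ is a \emph{proper} subgroup of a $\fp$-perfect group $N$, then $|\Opi{M}| \leq |M| < |N|$, so $\Opi{M}$ cannot be $G$-conjugate to $L$ (which has the same order as $N$). I would use this in both directions. For the ``if'' direction, I assume ($\star$) and fix $H' \leq H$ with $\Opi{H'} \sim_G L$. Writing $L'' := \Opi{H'}$, the group $L''$ is $\fp$-perfect, $G$-conjugate to $L$, and contained in $H$; thus for every $h \in H$ the conjugate $h^{-1}L''h$ is again $G$-conjugate to $L$ and contained in $H$, so ($\star$) forces $h^{-1}L''h \leq K$. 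Since $h^{-1}L''h \triangleleft h^{-1}H'h$ has solvable $\fp$-group quotient, the intermediate group $h^{-1}H'h \cap K$ satisfies $\Opi{h^{-1}H'h \cap K} = h^{-1}L''h \sim_G L$ by uniqueness of the $\fp$-residual, and every factor in the Tambara product equals $1$. For the ``only if'' direction, I specialize to $H' = L'$ where $L' \leq H$ is $G$-conjugate to $L$, so $\Opi{L'} = L'$; the factor for $h = 1$ requires $\Opi{L' \cap K} \sim_G L$, which by the order observation forces $L' \cap K = L'$, i.e.\ $L' \leq K$, which is ($\star$).

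The main obstacle I anticipate is the careful bookkeeping in Tambara's mark formula and tracking conjugations, normal closures and $\fp$-residuals of intersections simultaneously; once the order argument above is in place, both implications collapse cleanly. A subsidiary point worth highlighting is that passing from the a priori condition ``$y$ divides a power of $R^G_H(e_L)$'' to the cleaner ``$y \geq R^G_H(e_L)$ as idempotents'' relies crucially on $y$ being $\{0,1\}$-valued on marks, which is itself a consequence of $e_L$ being idempotent.
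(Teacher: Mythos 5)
Your proof is correct and takes essentially the same route as the paper's: reduce via Theorem~\ref{thm alg preservation} to the divisibility of $R^G_H(e_L)$ by $N_K^H R^G_K(e_L)$, pass to marks using the multiplicative formula of Corollary~\ref{marks of a norm formula}, and translate the resulting condition into group theory using Lemma~\ref{basic facts residual subgroup} and an order argument. The only difference is organizational — you prove the two implications between $(\star)$ and the mark condition directly, where the paper threads through the intermediate reformulations $(\Diamond a)$, $(\Diamond b)$ of Proposition~\ref{prop technical heart} — but the key lemmas and the order argument that force $L' \cap K = L'$ are the same.
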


The characterization of $e_L$ in terms of marks in Theorem \ref{Dress idempotents} implies that
$R^G_H(e_L) = 0$ whenever $L$ is not subconjugate in $G$ to $H$. From this, it is clear that
the norm $\tilde{N}_K^H$ exists for trivial reasons if $K$ is not super-conjugate in $G$ to $L$: it is just the zero
morphism between zero rings.
Similarly, there cannot be a norm map $\tilde{N}_K^H$ inherited from $N_K^H$ if $K$ is not
super-conjugate to $L$, but $H$ is. Indeed, it would have to be a map of multiplicative monoids from the zero ring to a
non-trivial ring, hence would satisfy $\tilde{N}_K^H(0)=1$. But $N_K^H(0)=\left[\mathrm{map_K(H,\emptyset)}\right] =0$ before
localizing, which is a contradiction.
The other cases are not obvious. We defer the proof of Theorem \ref{local Thm A} to Section \ref{subsection:proof} and
first state and prove the locally enhanced versions of Corollary~\ref{cor alg normal} and Corollary~\ref{cor alg trivial}.

\begin{cor} \label{local cor alg normal}
Assume that $L \leq G$ is $\fp$-perfect. Then $L$ is normal in $G$ if and only if the summand $\Loc{\Ablpi}{e_L}$ inherits from
$\Ablpi$ all norms of the form $\tilde{N}_K^H$ such that $K$ contains a subgroup conjugate in $G$ to $L$.
\end{cor}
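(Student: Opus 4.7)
The plan is to reduce the statement to a direct group-theoretic unpacking of condition $(\star)$ from Theorem~\ref{local Thm A}. Concretely, combining Theorem~\ref{local Thm A} with the hypothesis ``$K$ contains a subgroup $G$-conjugate to $L$,'' the corollary becomes the purely group-theoretic claim: $L$ is normal in $G$ if and only if, for every pair $K \leq H \leq G$ with $K$ containing a $G$-conjugate of $L$, every $G$-conjugate $L' \leq H$ of $L$ lies in $K$.

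For the forward direction, I would assume $L \trianglelefteq G$. Then $L$ is its own unique $G$-conjugate, so the hypothesis that $K$ contains a conjugate of $L$ forces $L \leq K$, and every conjugate of $L$ in $H$ is simply $L$ itself, hence lies in $K$. Thus $(\star)$ is automatic, and Theorem~\ref{local Thm A} supplies $\tilde{N}_K^H$ for every such pair.

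For the converse I would argue by contrapositive. Assuming $L$ is not normal in $G$, there exists $g \in G$ with $L' := gLg^{-1} \neq L$. Since $|L'| = |L|$, the inclusion $L' \leq L$ would force equality, so $L' \not\leq L$. Now set $K := L$ and $H := \langle L, L' \rangle \leq G$. Then $K \leq H$, $K$ trivially contains the conjugate $L$ of itself, but the conjugate $L' \leq H$ of $L$ fails to lie in $K$. Hence $(\star)$ fails for this pair, and Theorem~\ref{local Thm A} implies that $\tilde{N}_K^H$ does not descend to $\Loc{\Ablpi}{e_L}$, contradicting the hypothesis that all such norms exist.

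Since Theorem~\ref{local Thm A} is already assumed, there is essentially no obstacle: the argument is a short combinatorial check. The only point to keep in mind is the cardinality observation that a subgroup conjugate to $L$ and contained in $L$ must coincide with $L$, which is what makes the witness pair $(K,H)=(L,\langle L, L'\rangle)$ work in the contrapositive.
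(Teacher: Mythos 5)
Your proof is correct and takes essentially the same approach as the paper: both directions reduce to a direct unpacking of condition $(\star)$ from Theorem~\ref{local Thm A}, and the converse uses the witness $K=L$. The only cosmetic difference is that the paper takes $H:=G$ rather than $H:=\langle L,L'\rangle$, which is immaterial.
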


\begin{proof}
If $L$ is normal, it is the only group in its $G$-conjugacy class, hence the condition ($\star$) of Theorem~\ref{local Thm A} is
satisfied for such $K \leq H$. Conversely, if the condition holds for the groups $K:=L$ and $H:=G$,
then any $G$-conjugate of $L$ is contained in $L$, hence $L$ is normal in $G$.
\end{proof}

\begin{cor} \label{local cor alg trivial}
The Green ring $\Loc{\Ablpi}{e_L}$ inherits norms $\tilde{N}_K^H$ for all $K \leq H$ if and only if $L=1$ is
the trivial group. In this case, the norm maps equip $\Loc{\Ablpi}{e_1}$ with the structure of a Tambara functor.
\end{cor}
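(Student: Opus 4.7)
The plan is to read off both implications directly from Theorem \ref{local Thm A} by testing condition ($\star$) against carefully chosen pairs $K \leq H$.

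For the ``if'' direction, suppose $L = 1$. The only subgroup of $G$ conjugate in $G$ to $L$ is the trivial subgroup, and the trivial subgroup is contained in every $K \leq H$. Hence condition ($\star$) holds vacuously for all pairs $K \leq H \leq G$, so Theorem \ref{local Thm A} provides the norm maps $\tilde{N}_K^H$ at every level.

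For the ``only if'' direction, suppose all norms $\tilde{N}_K^H$ exist. Apply condition ($\star$) to the extreme choice $K = 1$ and $H = G$: any $G$-conjugate $L'$ of $L$ must then be contained in the trivial subgroup, so $L' = 1$. Since $L$ is trivially conjugate to itself, this forces $L = 1$, which is the desired conclusion.

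For the final statement about the Tambara structure, I would argue as follows. Once all norms are present, Theorem \ref{local Thm D}(i) (whose independent proof is set up in the surrounding section) applies with $\cI_1$ equal to the complete indexing system $\underline{\Set}$, whose admissible sets are all finite $H$-sets for every $H \leq G$. By the definition of an $\cI$-Tambara functor (Definition \ref{def ITF}~(4)), an $\underline{\Set}$-Tambara functor is exactly a Tambara functor in the sense of Tambara \cite{tambara}. Alternatively, and more directly, once every $\tilde{N}_K^H$ is defined by the explicit formula $\tilde{N}_K^H(a/e_L^n) = N_K^H(a)/N_K^H(e_L)^n$ of Remark \ref{tilde notation}/equation (\ref{formula N tilde}), the Tambara reciprocity relations on $\Ablpi$ descend termwise to $\Loc{\Ablpi}{e_1}$ (as in the proof of Theorem \ref{thm alg preservation}), yielding the full Tambara functor structure.

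The only substantive content is Theorem \ref{local Thm A} itself; once that is in hand, both directions of this corollary are immediate consequences of inspecting condition ($\star$) at opposite extremes, and the Tambara functor upgrade is automatic from having the complete family of compatible norms.
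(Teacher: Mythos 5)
Your proof is correct and follows essentially the same route as the paper: both directions reduce to checking condition $(\star)$ of Theorem~\ref{local Thm A}, with the ``only if'' direction obtained exactly as in the paper by taking $K=1$, $H=G$, and the Tambara functor upgrade obtained via the complete indexing system and \cite[Thm.~4.13]{BH:ITF}. The paper factors the ``if'' direction through Corollary~\ref{local cor alg normal} while you invoke Theorem~\ref{local Thm A} directly, but since that corollary is itself an immediate consequence of the theorem, the difference is purely one of citation depth.
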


\begin{proof}
If $L=1$, then all groups are supergroups of $L$ and all subgroup inclusions give rise to norm
maps by Corollary~\ref{local cor alg normal}. It then follows from \cite[Thm.~4.13]{BH:ITF} that $\Loc{\Ablpi}{e_1}$ is a Tambara
functor, cf.~the proof of Theorem~\ref{thm alg preservation}.
Conversely, if $L$ is non-trivial $\fp$-perfect, the inclusion $1 \to G$ does not give
rise to a well-defined norm on $\Loc{\Ablpi}{e_L}$.
\end{proof}

\begin{rem}
The ``only if'' part is implicit in work of Blumberg and Hill, at least integrally: All idempotents $e_L$ different from $e_1$ lie
in the augmentation ideal of $\AG$. If inverting such an element yielded a Tambara functor, then it would have to be the zero
Tambara functor, see \cite[Example~5.25]{BH:ITF}. But $\Loc{\Abl}{e_L}$ is always non-zero.
\end{rem}

\begin{rem}
It is also implicit in Nakaoka's work on ideals of Tambara functors \cite{nakaoka:ideals} that the idempotent summands of the
($\fp$-local) Burnside ring Mackey functor cannot all be Tambara functors, for if they were, then the idempotent splitting would be
a splitting of Tambara functors. But by \cite[Prop.~4.15]{nakaoka:ideals}, this implies that $A(1) \cong \Z$ splits non-trivially,
which is absurd. (Note that there is a minor error in statements (2)--(4) of loc.~cit.: the
requirement that the respective ideals and elements be non-zero is missing.)
\end{rem}

\begin{rem}
When working $p$-locally, the ring $\Loc{A(G)_{(p)}}{e_1}$ can be described in two different ways: It agrees with the
$p$-local Burnside ring with $p$-isotropy, i.e., the $p$-localization of the Grothendieck ring of finite $G$-sets all of
whose isotropy groups are $p$-groups. Moreover, it can be identified with the $p$-localization of the Burnside ring of the
$p$-fusion system of the group $G$. We refer the reader to \cite[Section~5]{grodal:burnside} for details.
\end{rem}

As an illustration of Theorem \ref{local Thm A}, we will discuss the idempotent splittings of $A(A_5)$ (integrally) and
$A(\Sigma_3)$ (locally at the primes $2$ and $3$) in detail in Section \ref{section:examples}. There, we also spell out what
happens in the rational splitting ($\fp=\emptyset$) for any finite group $G$.

\subsection{The proof of Theorem A} \label{subsection:proof}
The main idea of the proof is that we can check the hypotheses for preservation of norm maps from Theorem~\ref{thm alg
preservation} on marks. As norm maps in the Burnside ring are given by co-induction functors of equivariant sets, we need to
understand how they interact with taking fixed points. To that end, we first record some technical statements before giving the
proof of Theorem~\ref{local Thm A} (Theorem~\ref{Thm A}).

\begin{lemma}
For subgroups $K, H \leq N \leq G$, let $P$ be the pullback in the category of $G$-sets of the canonical surjections $G/H \to G/N$
and $G/K \to G/N$.
\begin{center}
$ \xymatrix@M=8pt@R=1.5pc{
	P \ar[d] \ar[r] & G/H \ar[d] \\
	G/K \ar[r] & G/N
}$
\end{center}
Then $P$ has an orbit decomposition given by\\
\[ P \cong \coprod_{n \in K\backslash N/H} G/(K \cap {}^n \! H) \]
where the summation is over representatives of double cosets. Under this identification, the map $P \to G/K$
is the sum of canonical surjections associated to the subgroup inclusions $K \cap {}^n \! H \leq K$, whereas
the map $P \to G/H$ is given on the $n$-th summand by conjugation by $n^{-1}$ followed by the canonical
surjection associated to the subgroup inclusion $({}^{n^{-1}} K) \cap H \leq H$. \qed
\end{lemma}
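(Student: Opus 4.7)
The plan is to compute the pullback $P$ directly from its definition and identify its $G$-orbits via a double coset argument. First I would describe the underlying set of $P$ as $\{(xH, yK) \in G/H \times G/K : xN = yN\}$, equipped with the diagonal $G$-action. Acting by $y^{-1}$ moves any such pair to one of the form $(nH, K)$ with $n := y^{-1}x \in N$ (using $K \leq N$ and $xN = yN$), so every $G$-orbit contains at least one representative of this type.

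Next I would determine the orbits and stabilizers. Two representatives $(nH, K)$ and $(n'H, K)$ lie in the same orbit if and only if there exists $k \in K$ with $knH = n'H$, equivalently $KnH = Kn'H$ as double cosets in $N$. Hence the set of $G$-orbits of $P$ is in bijection with $K\backslash N/H$. A direct calculation gives the stabilizer of $(nH, K)$ as $\{g \in K : gnH = nH\} = K \cap nHn^{-1} = K \cap {}^n H$, and so the orbit of $(nH, K)$ is isomorphic to $G/(K \cap {}^n H)$. This produces the decomposition claimed in the lemma.

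To identify the two projections, I would work orbit by orbit. On the summand indexed by $n$, the projection to $G/K$ sends the identity coset to $K \in G/K$ and therefore coincides with the canonical surjection induced by the inclusion $K \cap {}^n H \leq K$. For the projection to $G/H$, the identity coset is sent to $nH$, so the map is $a(K \cap {}^n H) \mapsto anH$. Using the identity $n^{-1}(K \cap {}^n H) n = ({}^{n^{-1}} K) \cap H$, this factors as the conjugation isomorphism
\[ G/(K \cap {}^n H) \xrightarrow{\cong} G/(({}^{n^{-1}} K) \cap H), \quad aJ \mapsto an \cdot n^{-1}Jn, \]
followed by the canonical surjection $G/(({}^{n^{-1}} K) \cap H) \to G/H$, exactly as stated.

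The main obstacle is purely bookkeeping: choosing orbit representatives consistently and keeping track of left versus right cosets, the double coset indexing set, and the convention ${}^n H = nHn^{-1}$ so that the two projections come out in the form claimed. Once the representative $(nH, K)$ is chosen, no deeper conceptual input is required.
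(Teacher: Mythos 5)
Your proof is correct and complete. The paper states this lemma without proof (marking it \verb|\qed| as a standard double-coset computation), and your direct verification — describing $P$ as pairs $(xH,yK)$ with $xN=yN$, normalizing orbit representatives to $(nH,K)$ with $n\in N$, identifying orbits with $K\backslash N/H$ and stabilizers with $K\cap{}^nH$, and tracking the two projections orbitwise — is exactly the intended argument, with all the bookkeeping (including the conjugation identification $n^{-1}(K\cap{}^nH)n=({}^{n^{-1}}K)\cap H$ for the map to $G/H$) carried out correctly.
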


This implies a multiplicative double coset formula for norm maps of $\Ablpi$.

\begin{cor} \label{multiplicative DCS}
For $K, H, N$ and $G$ as before and all $x \in A(H)_{(\fp)}$, the following identity holds in $A(K)_{(\fp)}$:
\[ R^N_K N_H^N(x) = \prod_{n \in K\backslash N/H} N_{K \cap {}^n \! H}^K c_n R^H_{({}^{n^{-1}} \! K)
\cap H}(x) \] where we wrote $c_n$ for the map induced from conjugation by $n \in N$.
\end{cor}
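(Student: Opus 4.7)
The plan is to apply the base-change axiom for Tambara functors to the pullback square supplied by the preceding lemma, and then read the answer off from the explicit orbit decomposition of $P$.

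Recall one of the defining identities for a Tambara functor $\uR$: for every pullback square of finite $G$-sets
\begin{center}
$\xymatrix@M=6pt@R=1.2pc{
W \ar[r]^{\beta} \ar[d]_{\alpha} & Y \ar[d]^{f} \\
X \ar[r]_{g} & Z
}$
\end{center}
one has the equality $R_g \circ N_f = N_\alpha \circ R_\beta$ of maps $\uR(Y) \to \uR(X)$. (See e.g.\ \cite{tambara}, \cite{strickland:tambara}; this is one of the four basic relations that hold in ${bispan}^G$.) Applying this to $\uR = \Ablpi$ and to the square of the preceding lemma (with $Y = G/H$, $Z = G/N$, $X = G/K$ and $W = P$) gives
\[ R^N_K \circ N_H^N \;=\; N_{f'} \circ R_{g'}. \]

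Next, the orbit decomposition $P \cong \coprod_{n} G/(K \cap {}^n\!H)$ from the previous lemma yields an isomorphism $A(P)_{(\fp)} \cong \prod_{n} A(K \cap {}^n\!H)_{(\fp)}$; under this identification the restriction $R_{g'}$ splits into a product of maps, one for each double coset $n \in K \backslash N / H$. The lemma's explicit description of $P \to G/H$ on the $n$-th orbit, namely conjugation by $n^{-1}$ (which carries $K \cap {}^n\!H$ to $({}^{n^{-1}}K) \cap H$) followed by the canonical surjection induced from the inclusion $({}^{n^{-1}}K) \cap H \leq H$, identifies the $n$-th component of $R_{g'}(x)$ with $c_n R^H_{({}^{n^{-1}}K) \cap H}(x)$. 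Similarly, since the norm $N_{f'}$ along a disjoint union of maps with common target is the product of the individual norms (co-induction turns coproducts into products), $N_{f'}$ becomes the product over $n$ of the norms $N^K_{K \cap {}^n\!H}$. Assembling these two observations produces exactly the right-hand side of the claimed formula.

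The only real work is bookkeeping: tracking the conjugation conventions so that the composite on the $n$-th orbit really reads $c_n R^H_{({}^{n^{-1}}K) \cap H}$ rather than some inverse or conjugate variant. This is determined unambiguously by the previous lemma, so once that is correctly interpreted there is no further obstacle; the corollary is then a direct combination of the base-change axiom and additivity/multiplicativity of the structure maps of $\Ablpi$ with respect to disjoint unions.
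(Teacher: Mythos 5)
Your argument is exactly the one the paper intends (and leaves implicit): the corollary is stated as a direct consequence of the preceding lemma, via the base-change relation $R_g \circ N_f = N_\alpha \circ R_\beta$ for a pullback square in a Tambara functor, combined with the lemma's explicit orbit decomposition of $P$ and its description of the two projection maps. Your bookkeeping of the conjugation $c_n$ and the two intersections $K \cap {}^n\!H$ versus $({}^{n^{-1}}K)\cap H$ matches the lemma's conventions, so the proof is correct and takes essentially the same route.
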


\begin{lemma} \label{lemma marks of norms}
The norms of $\Ablpi$ satisfy $\phi^H(N_K^H(a))=\phi^K(a)$ for all $a \in A(K)_{(\fp)}$ and all nested subgroups $K \leq H \leq G$.
\end{lemma}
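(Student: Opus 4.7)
The plan is to reduce the claim to a direct fixed-point computation for co-induced sets. Recall from Example~\ref{example TF} that the norms on $\Abl$ are induced by co-induction of finite equivariant sets: for a finite $K$-set $X$, the class $N_K^H([X]) \in A(H)$ is represented by the $H$-set $\mathrm{map}_K(H, X)$ of $K$-equivariant maps from $H$ (viewed as a left $K$-set via the inclusion $K \leq H$) to $X$, equipped with the right $H$-action $(f \cdot h)(h') = f(hh')$.

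For a positive class $[X]$, I would compute $\phi^H(N_K^H([X])) = |\mathrm{map}_K(H, X)^H|$ directly. An $H$-fixed element is a $K$-equivariant map $f \colon H \to X$ satisfying $f(hh') = f(h')$ for all $h, h' \in H$; evaluating at $h' = e$ forces $f$ to be constant. A constant $K$-equivariant map with value $x \in X$ requires $kx = x$ for all $k \in K$, i.e., $x \in X^K$, and conversely every element of $X^K$ yields such a map. This natural bijection $\mathrm{map}_K(H, X)^H \cong X^K$ gives $\phi^H(N_K^H([X])) = |X^K| = \phi^K([X])$.

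To extend the identity to arbitrary $a \in A(K)_{(\fp)}$, I would note that the same argument applied to a disjoint union yields $\mathrm{map}_K(H, X \sqcup Y)^H \cong X^K \sqcup Y^K$, so the restriction of $\phi^H \circ N_K^H$ to the positive sub-semiring is additive as well as multiplicative, hence a semiring homomorphism agreeing with $\phi^K$. Since $A(K)_{(\fp)}$ is the Grothendieck ring of this sub-semiring, this additive-and-multiplicative map on positive classes extends uniquely to a ring homomorphism, necessarily equal to $\phi^K$.

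The main obstacle I anticipate is to verify that this unique ring-homomorphism extension does coincide with $\phi^H \circ N_K^H$ on virtual classes, where $N_K^H$ is no longer given by naive co-induction but by the polynomial extension dictated by the Tambara-functor structure of $\Abl$. I expect to handle this either by unwinding that polynomial extension directly, or by exploiting the injectivity of the mark homomorphism $A(H)_{(\fp)} \hookrightarrow \prod_{(J) \leq H} \Z_{(\fp)}$ to reduce the identity to the positive-class calculation coordinatewise.
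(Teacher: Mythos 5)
Your fixed-point computation is correct and is exactly the paper's: evaluation at the unit gives the natural bijection $\mathrm{map}_K(H,X)^H \cong X^K$, which settles the identity on classes of actual $K$-sets and hence on the positive sub-semiring. (As a side note, the paper's printed formula $\Hom_H(G,X)^G \cong X^H$ has a typographical slip; your version with $K \leq H$ is the intended one.)

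The gap is where you say it is, and your Grothendieck-extension step does not close it. The map $N_K^H$ is multiplicative but \emph{not} additive, so there is no a priori reason that the actual function $\phi^H \circ N_K^H$ on all of $A(K)_{(\fp)}$ coincides with the unique ring-homomorphism extension of its restriction to the positive cone. Knowing that the restriction of $\phi^H \circ N_K^H$ to the positive semiring \emph{has} a unique ring-homomorphism extension equal to $\phi^K$ tells you nothing about the values of $\phi^H \circ N_K^H$ at genuinely virtual classes, precisely because you have not yet shown $\phi^H \circ N_K^H$ is additive. Your second proposed remedy (injectivity of the full mark homomorphism on $A(H)_{(\fp)}$) does not bear on this: the statement concerns a single coordinate $\phi^H$, and you would still need control of $N_K^H(a)$ for virtual $a$.

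Your first proposed remedy is the right one, and it is what the paper outsources to Tambara: norms are \emph{algebraic maps} in Tambara's sense (\cite[Prop.~4.7]{tambara}), and Tambara's \cite[Lemma~4.5]{tambara} says that an identity between algebraic maps which holds on a generating submonoid holds everywhere. Concretely, one can close the gap by observing that $\phi^H \circ N_K^H$ \emph{is} in fact additive on all of $A(K)_{(\fp)}$: Tambara's distributivity formula expresses $N_K^H(a+b)$ as $N_K^H(a) + N_K^H(b)$ plus a sum of terms of the form $T_J^H(\cdots)$ with $J < H$ a proper subgroup, and $\phi^H$ annihilates every transfer from a proper subgroup since $(H \times_J S)^H = \emptyset$ whenever $J < H$. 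Once additivity is in hand, agreement with $\phi^K$ on positive classes immediately gives agreement everywhere, since positive classes span $A(K)_{(\fp)}$ additively. Without one of these two supplements your proposal is incomplete.
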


\begin{proof}
One readily verifies that for a finite $H$-set $X$, evaluation at the unit defines a bijection
\[ \Hom_H(G,X)^G \cong X^H. \]
It follows that the statement is true for all actual $H$-sets and hence for the submonoid of all
$\Z_{(\fp)}$-linear combinations of $H$-sets with non-negative coefficients. Since the latter
submonoid generates the group $A(H)_{(\fp)}$ and norm maps are algebraic (see
\cite[Prop.~4.7]{tambara}), \cite[Lemma~4.5]{tambara} implies that the statement is true for all virtual
$H$-sets.
\end{proof}

\begin{cor}[Cf.~\cite{oda}, Lemma~2.2] \label{marks of a norm formula}
For $Q,K \leq H$ and $x \in A(K)_{(\fp)}$, we have:
\[ \phi^Q N_K^H(x) = \prod_{h \in Q \backslash H/K} \phi^{{\,}^{h^{-1}} \! Q \cap K}(x) \]  
\end{cor}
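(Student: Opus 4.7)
The plan is to combine the two results stated immediately before: the multiplicative double coset formula (Corollary~\ref{multiplicative DCS}) and the formula for marks of norms (Lemma~\ref{lemma marks of norms}).

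First I would reduce to a computation over $Q$ by observing that the mark $\phi^Q$ on $A(H)_{(\fp)}$ factors through the restriction: one has $\phi^Q(y) = \phi^Q(R^H_Q(y))$, since the $Q$-fixed points of an $H$-set only depend on its underlying $Q$-set. Therefore
\[ \phi^Q N_K^H(x) = \phi^Q\bigl( R^H_Q N_K^H(x) \bigr). \]
Next I would apply Corollary~\ref{multiplicative DCS} with $N$, $H$, $K$ in the statement of that corollary playing the roles of our $H$, $K$, $Q$ respectively. This yields
\[ R^H_Q N_K^H(x) = \prod_{h \in Q \backslash H/K} N_{Q \cap {}^h K}^Q \, c_h \, R^K_{({}^{h^{-1}}Q)\cap K}(x). \]

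Now I would apply $\phi^Q$ term by term (it is a ring homomorphism) and then use Lemma~\ref{lemma marks of norms} to peel off each norm: for each double coset representative $h$,
\[ \phi^Q\bigl(N_{Q \cap {}^h K}^Q(z)\bigr) = \phi^{Q \cap {}^h K}(z), \]
applied to $z = c_h R^K_{({}^{h^{-1}}Q)\cap K}(x)$. Finally I would invoke the compatibility of marks with conjugation: for any subgroup $A \leq H$ and element $a \in A(A)_{(\fp)}$, the conjugation isomorphism $c_h \colon A(A) \to A({}^h A)$ satisfies $\phi^{{}^h A}(c_h(a)) = \phi^A(a)$, because conjugation by $h$ is a bijection between the $A$-fixed points of an $A$-set and the $({}^h A)$-fixed points of its conjugate. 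Applied with $A = ({}^{h^{-1}}Q) \cap K$, so that ${}^h A = Q \cap {}^h K$, this converts each factor into $\phi^{({}^{h^{-1}}Q)\cap K}\bigl( R^K_{({}^{h^{-1}}Q)\cap K}(x) \bigr) = \phi^{({}^{h^{-1}}Q)\cap K}(x)$, which gives exactly the asserted product.

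I do not expect any real obstacle: the argument is a straightforward application of the two preceding results. The only point requiring care is bookkeeping for the conjugations — matching up $Q \cap {}^h K$ and ${}^{h^{-1}}Q \cap K$ and noting that they are related by conjugation by $h$, so that the mark at one equals the mark at the other after transport by $c_h$. Once this is handled, the identity drops out directly.
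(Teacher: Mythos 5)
Your argument is exactly the paper's: reduce via $\phi^Q = \phi^Q R^H_Q$, apply the multiplicative double coset formula (Corollary~\ref{multiplicative DCS}), distribute $\phi^Q$ over the product, collapse each $\phi^Q N_{Q\cap{}^hK}^Q$ to $\phi^{Q\cap{}^hK}$ by Lemma~\ref{lemma marks of norms}, and finish by conjugation-invariance of marks. The bookkeeping with $c_h$ matching $Q\cap{}^hK$ to ${}^{h^{-1}}Q\cap K$ is handled correctly, so the proposal is complete and faithful to the paper's proof.
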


\begin{proof}
In the following computation, the second equality is the multiplicative double coset formula of Corollary \ref{multiplicative DCS},
and the third uses that $\phi^Q$ is a ring homomorphism. The fourth equality is an application of Lemma~\ref{lemma marks of norms}.
\\
\[ \phi^Q N_K^H(x) = \phi^Q R^H_Q N_K^H(x)
= \phi^Q \left( \prod_{h} N_{Q \cap {\,}^h \! K}^Q c_h R^K_{{\,}^{h^{-1}} \! Q \cap K}(x) \right) \]
\\
\[ = \prod_{h} \phi^Q N_{Q \cap {\,}^h \! K}^Q c_h R^K_{{\,}^{h^{-1}} \! Q \cap K}(x)
= \prod_{h} \phi^{Q \cap {\,}^h \! K} c_h R^K_{{\,}^{h^-1} \! Q \cap K}(x) 
= \prod_{h} \phi^{{\,}^{h^{-1}} \! Q \cap K}(x) \qedhere \]
\end{proof}

\begin{lemma} \label{lemma little lemma}
Let $e, e' \in R$ be idempotents in a commutative ring. Then $e$ divides $e'$ if and only if $e \cdot e' =
e'$.
\end{lemma}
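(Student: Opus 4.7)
The plan is to prove both implications by direct computation, using only that $e^2 = e$ (the idempotency of $e'$ is in fact not needed, though it is harmless).

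For the reverse direction, assume $e \cdot e' = e'$. Then by definition $e$ divides $e'$, with witness $e'/e = e'$ itself (i.e., $e' = e \cdot e'$ exhibits $e'$ as a multiple of $e$). This half requires no idempotency at all.

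For the forward direction, suppose $e$ divides $e'$, so there exists $r \in R$ with $e' = e \cdot r$. Multiplying both sides by $e$ and using $e^2 = e$ gives
\[ e \cdot e' = e \cdot (e \cdot r) = e^2 \cdot r = e \cdot r = e'. \]
This is the whole argument; the main (and only) obstacle is recognizing that idempotency of $e$ lets one absorb the extra factor of $e$ that appears upon multiplying a factorization $e' = e \cdot r$ by $e$. I would present the proof as essentially a one-line computation in each direction.
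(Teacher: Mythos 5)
Your proof is correct and essentially the same as the paper's: the paper phrases the forward direction as "multiplication by $e$ is projection onto the idempotent summand $eR$," which is exactly the fact your one-line computation $e\cdot(er) = e^2 r = er$ establishes. Your observation that the idempotency of $e'$ is not actually used is a nice (minor) sharpening.
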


\begin{proof}
Assume that $e$ divides $e'$. Then $e' \in eR$, hence $e \cdot e' = e'$, since multiplication by $e$ is
projection onto the idempotent summand $eR$ of $R$. The other direction is obvious.
\end{proof}

\begin{lemma} \label{basic facts residual subgroup}
For $H \leq G$ and $g \in G$, the following holds:
\begin{enumerate}[a)]
  \item $\Opi{H} \subseteq \Opi{G}$
  \item $\Opi{^gH} = {}^g(\Opi{H})$
\end{enumerate}
\end{lemma}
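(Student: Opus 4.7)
The plan is to use only the defining universal property of the $\fp$-residual subgroup, namely that $\Opi{G}$ is the smallest normal subgroup of $G$ whose quotient is a solvable $\fp$-group. Both assertions are routine group-theoretic manipulations; I do not expect any genuine obstacles, so the main task is to organize the logic cleanly.

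For part a), I would first observe that $H \cap \Opi{G}$ is normal in $H$, and the inclusion $H \hookrightarrow G$ descends to an injective group homomorphism
\[ H/(H \cap \Opi{G}) \hookrightarrow G/\Opi{G}. \]
The target is a solvable $\fp$-group by definition of $\Opi{G}$, and the class of solvable $\fp$-groups is closed under passage to subgroups. Hence $H/(H \cap \Opi{G})$ is itself a solvable $\fp$-group, and the minimality property of $\Opi{H}$ forces $\Opi{H} \subseteq H \cap \Opi{G} \subseteq \Opi{G}$.

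For part b), I would use that conjugation by $g$ is an automorphism of $G$ that restricts to an isomorphism $H \xrightarrow{\cong} {}^gH$, carrying normal subgroups to normal subgroups and inducing isomorphisms between quotients. In particular, ${}^g(\Opi{H})$ is normal in ${}^gH$ with quotient canonically isomorphic to $H/\Opi{H}$, which is a solvable $\fp$-group. The minimality property for ${}^gH$ then gives $\Opi{^gH} \subseteq {}^g(\Opi{H})$. Applying the same argument with the roles of $H$ and ${}^gH$ interchanged (using $g^{-1}$) yields the reverse inclusion $\Opi{H} \subseteq {}^{g^{-1}}(\Opi{^gH})$, i.e.\ ${}^g(\Opi{H}) \subseteq \Opi{^gH}$, completing the proof.
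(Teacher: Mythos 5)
Your proof is correct and follows essentially the same route as the paper's: part a) via the second isomorphism theorem (your injection $H/(H\cap \Opi{G}) \hookrightarrow G/\Opi{G}$ is exactly the paper's $H/(H\cap \Opi{G}) \cong (H\cdot\Opi{G})/\Opi{G} \leq G/\Opi{G}$), and part b) via the lattice isomorphism induced by conjugation; you just spell out the minimality argument in both directions where the paper leaves it implicit.
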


The author learned the proof of part a) from Joshua Hunt.

\begin{proof}
Since $\Opi{G}$ is normal in $G$, we know that $H \cap \Opi{G}$ is normal in $H$. Now the group $H/(H \cap \Opi{G}) \cong
(H \cdot \Opi{G})/\Opi{G} \leq G/\Opi{G}$ is isomorphic to a subgroup of a solvable $\fp$-group, hence is a solvable $\fp$-group
itself. By minimality, $\Opi{H} \leq H \cap \Opi{G} \leq \Opi{G}$, which proves a). \\
The assertion b) follows from the fact that conjugation by $g$ induces a bijection between the subgroup lattices of $H$ and
$^gH$ which preserves normality.
\end{proof}

\begin{prop} \label{prop technical heart}
In the situation of Theorem~\ref{local Thm A}, the following are equivalent:
\begin{enumerate}
  \item[($\star$)] Every subgroup $L' \leq H$ that is conjugate in $G$ to $L$ is contained in $K$.
  \item[($\Diamond$)] For all $Q \leq H$ such that $\Opi{Q} \sim_G L$, we have $\phi^Q(N_K^H(R_K(e_L))) = 1$.
\end{enumerate}
\end{prop}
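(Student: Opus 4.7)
The plan is to reduce everything to a direct calculation with marks via Corollary~\ref{marks of a norm formula} and Dress's characterization of $e_L$.

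First, I would apply Corollary~\ref{marks of a norm formula} with $x = R^G_K(e_L)$ to write
\[ \phi^Q N_K^H(R^G_K(e_L)) = \prod_{h \in Q \backslash H / K} \phi^{{}^{h^{-1}}Q \cap K}(e_L). \]
By Theorem~\ref{Dress idempotents}, each factor is either $0$ or $1$, and equals $1$ precisely when $\Opi{{}^{h^{-1}}Q \cap K} \sim_G L$. Hence for $Q$ with $\Opi{Q} \sim_G L$, condition $(\Diamond)$ for this $Q$ is equivalent to the assertion that $\Opi{{}^{h^{-1}}Q \cap K} \sim_G L$ for \emph{every} double coset representative $h \in H$.

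For the implication $(\star) \Rightarrow (\Diamond)$, fix $Q \leq H$ with $\Opi{Q} \sim_G L$ and fix $h \in H$. Setting $Q' := {}^{h^{-1}}Q$, which still lies in $H$, Lemma~\ref{basic facts residual subgroup}~b) gives $\Opi{Q'} \sim_G L$. Condition $(\star)$, applied to the subgroup $\Opi{Q'} \leq H$, forces $\Opi{Q'} \leq K$, hence $\Opi{Q'} \leq Q' \cap K \leq Q'$. The key step is then to verify that
\[ \Opi{Q' \cap K} = \Opi{Q'}. \]
The inclusion ``$\leq$'' follows because $(Q' \cap K)/\Opi{Q'}$ embeds in the solvable $\mathfrak{p}$-group $Q'/\Opi{Q'}$, so by minimality $\Opi{Q' \cap K} \leq \Opi{Q'}$. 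For ``$\geq$'', I would observe that $\Opi{Q'}$ is itself $\mathfrak{p}$-perfect (a quick direct argument: $\Opi{\Opi{Q'}}$ is characteristic in $\Opi{Q'}$, hence normal in $Q'$, and the quotient $Q'/\Opi{\Opi{Q'}}$ is a solvable $\mathfrak{p}$-group, forcing $\Opi{Q'} \leq \Opi{\Opi{Q'}}$). Then Lemma~\ref{basic facts residual subgroup}~a) applied to $\Opi{Q'} \leq Q' \cap K$ yields $\Opi{Q'} = \Opi{\Opi{Q'}} \leq \Opi{Q' \cap K}$. This gives $\Opi{Q' \cap K} = \Opi{Q'} \sim_G L$, establishing $(\Diamond)$.

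For the converse $(\Diamond) \Rightarrow (\star)$, let $L' \leq H$ be $G$-conjugate to $L$. Since $L$ is $\mathfrak{p}$-perfect, so is $L'$, so $\Opi{L'} = L' \sim_G L$ and we may take $Q := L'$ in $(\Diamond)$. The product formula then forces every factor to equal $1$; picking the double coset represented by $h = 1$ yields $\Opi{L' \cap K} \sim_G L$. In particular $|\Opi{L' \cap K}| = |L| = |L'|$, and the chain $\Opi{L' \cap K} \leq L' \cap K \leq L'$ forces all three to coincide, so $L' \leq K$ as required.

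The main obstacle is the equality $\Opi{Q' \cap K} = \Opi{Q'}$; everything else is bookkeeping with the mark formula and Dress's description of $e_L$. Once this equality is in place, both directions are short.
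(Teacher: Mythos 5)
Your proof is correct, and it takes a route that differs from the paper's in one meaningful way. Both arguments open identically with Corollary~\ref{marks of a norm formula} and Dress's formula for $\phi^{(-)}(e_L)$, reducing $(\Diamond)$ to the condition that $\Opi{{}^{h^{-1}}Q \cap K} \sim_G L$ for all $h$. The paper then reduces the verification of $(\Diamond)$ to the \emph{minimal} $Q$ in the relevant set (namely $Q = L''$ with $L'' \sim_G L$) by a monotonicity/sandwiching argument using Lemma~\ref{basic facts residual subgroup}~a) and equality of orders, and only afterwards connects the resulting condition $(\Diamond b)$ to $(\star)$. You instead prove $(\star) \Rightarrow (\Diamond)$ directly for \emph{arbitrary} $Q$, and the cost of bypassing the reduction to minimal $Q$ is your absorption lemma $\Opi{Q' \cap K} = \Opi{Q'}$ (valid once $\Opi{Q'} \leq K$). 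That lemma in turn rests on an observation the paper never states: the $\fp$-residual construction is idempotent, i.e.\ $\Opi{\Opi{Q'}} = \Opi{Q'}$, which you correctly establish by the characteristic-subgroup and extension-closure argument. Your converse direction $(\Diamond) \Rightarrow (\star)$, specializing to $Q = L'$ and the trivial double coset $h=1$, is essentially the same as the paper's Step~3 ``if'' direction, including the order-counting along $\Opi{L' \cap K} \leq L' \cap K \leq L'$. Net effect: you trade the paper's order-sandwich monotonicity step for an idempotency argument; both are short, elementary group theory, and both ultimately invoke Lemma~\ref{basic facts residual subgroup} in the same spots. The idempotency of $\Opi{-}$ is a clean reusable fact and arguably worth recording explicitly.
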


\begin{proof}
The proof proceeds in three steps. Step 1 and 2 simplify the condition ($\Diamond$), whereas Step 3 shows
that the resulting reformulation of ($\Diamond$) is equivalent to ($\star$). \\
\emph{Step 1:} Let $Q \leq Q' \leq H$ such that $\Opi{Q} \sim_G L \sim_G \Opi{Q'}$. We claim that if the
conclusion of ($\Diamond$) holds for $Q$, then it does so for $Q'$. \\
Indeed, if the conclusion of ($\Diamond$) holds for $Q$, then Theorem~\ref{Dress idempotents} together with
Corollary~\ref{marks of a norm formula} implies that for all $h \in H$, we have $\Opi{Q \cap {\,}^h \! K}
\sim_G L$. From Lemma~\ref{basic facts residual subgroup}, we see that
\[ L \sim_G \Opi{Q \cap {\,}^h \! K} \leq \Opi{Q' \cap {\,}^h \! K} \leq \Opi{Q'} \sim_G L, \]
so $\Opi{Q' \cap {\,}^h \! K}$ is conjugate to $L$, and hence the conclusion of ($\Diamond$) holds for $Q'$.
\\
The above claim shows that when verifying ($\Diamond$), we need not take into account all elements of the set $\{ Q \leq H \, | \,
\Opi{Q} \sim_G L \}$ but can restrict attention to its minimal elements under inclusion, i.e., to the groups $L'' \leq H$ such that
$L'' \sim_G L$.
In other words, ($\Diamond$) is equivalent to:
\begin{enumerate}
  \item[($\Diamond a$)] For all $L'' \leq H$ such that $L'' \sim_G L$, we have $\phi^{L''}(N_K^H(R_K(e_L))) = 1$.
\end{enumerate}
\emph{Step 2:} Let $L''$ be as in the assumption of ($\Diamond a$). As we have seen, the equation
$\phi^{L''}(N_K^H(R_K(e_L))) = 1$ holds if and only for all $h \in H$, we have $\Opi{L'' \cap {\,}^h \! K} \sim_G L$. But
\[ \Opi{L'' \cap {\,}^h \! K} = \Opi{{\,}^{h^{-1}} \! L'' \cap K}, \]
so substituting $L'$ for ${\,}^{h{^-1}} \! L''$ shows that ($\Diamond$) is equivalent to:
\begin{enumerate}
  \item[($\Diamond b$)] For all $L' \leq H$ such that $L' \sim_G L$, we have $\Opi{L' \cap K} \sim_G L$.
\end{enumerate}
\emph{Step 3:} We are left to show that for $L'$ as in the assumption of ($\Diamond b$), $L'$ is in $K$ if and
only $\Opi{L' \cap K} \sim_G L$. For the ``only if'' part, assume that $L' \leq K$, then
$ \Opi{L' \cap K} = L' \sim_G L.$
For the ``if'' part, observe that
\[ L \sim_G \Opi{L' \cap K} \leq L' \cap K \leq L'. \]
The conjugate copy of $L$ contained in $L' \cap K$ must be $L'$, so $L' \leq K$.
\end{proof}
 
\begin{proof}[Proof of Theorem~\ref{local Thm A}]
We know from Theorem~\ref{thm alg preservation} that the norm $N_K^H$ descends to a well-defined map $\tilde{N}_K^H$ if and only if
the element $N_K^H(R_K(e_L))$ divides $R_H(e_L)$ in $A(H)_{(\fp)}$. By Lemma~\ref{lemma little lemma}, this division relation is
equivalent to the equation
\[ N_K^H(R_K(e_L)) \cdot R_H(e_L) = R_H(e_L) \]
and holds if and only if for all $Q \leq H$, we have
\[ \phi^Q(N_K^H(R_K(e_L))) \cdot \phi^Q(R_H(e_L)) = \phi^Q(R_H(e_L)). \]
Here, we used that the homomorphism of marks
\[ (\phi^Q)_{(Q) \leq H} \colon A(H)_{(\fp)} \to \prod_{(Q) \leq H} \Z_{(\fp)} \]
is an injective ring homomorphism. \\
All three integers in the last equation are idempotents, hence can only be $0$ or $1$, and the equation holds in all cases except
when $\phi^Q(N_K^H(R_K(e_L)))$ is zero, but $\phi^Q(R_H(e_L))$ is one. The formula for marks given in Theorem~\ref{Dress idempotents}
then implies that the equation is equivalent to the condition ($\Diamond$) of Proposition~\ref{prop technical heart}. The latter is
equivalent to ($\star$), and Theorem~\ref{local Thm A} follows.
\end{proof}

\subsection{The incomplete Tambara functor structure} \label{subsection:structure}
It still remains to see how the collection of norm maps described by Theorem \ref{local Thm A} fits into the framework of
\cite{BH:ITF}. First of all, we describe the norm maps in $\Loc{\Ablpi}{e_L}$ arising from arbitrary maps of $G$-sets. This is the
special case $\uR = \Ablpi, \; x = e_L$ of the following result:

\begin{prop} \label{prop projections}
Let $\uR$ be a Tambara functor and let $x \in \uR(G)$ be some element. Let $f \colon X \to Y$ be any map of
finite $G$-sets.
Then the levelwise localization $\Loc{\uR}{x}$ inherits a norm map $\tilde{N}_f$ from $\uR$ if and only if for
all $x \in X$, it inherits a norm map $\tilde{N}_{{f}|{G \cdot x}}$ for the restriction
\[ \restr{f}{G \cdot x} \colon G \cdot x \to G \cdot f(x) \]
to the orbits of $x$ and $f(x)$.
\end{prop}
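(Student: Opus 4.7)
The plan is to reduce the existence question for $\tilde{N}_f$ to a unit condition on $N_f(R^G_X(x))$, and then to break this condition up orbit by orbit using the compatibility of the Tambara norm with disjoint union decompositions of source and target. First, generalizing the argument around formula~(\ref{formula N tilde}) (essentially the content of Theorem~\ref{thm alg preservation}), the levelwise localization $\Loc{\uR}{x}$ inherits a norm $\tilde{N}_f$ along an arbitrary map $f \colon X \to Y$ of finite $G$-sets if and only if $N_f(R^G_X(x))$ becomes a unit in $\uR(Y)[R^G_Y(x)^{-1}]$. Indeed, $\tilde{N}_f$ must extend $N_f$ and be multiplicative, which forces the formula $\tilde{N}_f(a/R^G_X(x)^n) = N_f(a) / N_f(R^G_X(x))^n$, making sense exactly under the unit condition.

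Next, I would decompose $Y = \bigsqcup_j Y_j$ and $X = \bigsqcup_i X_i$ into $G$-orbits, and observe that each $X_i$ maps into a unique orbit $Y_{j(i)}$. Two bispan-functorial properties of $N$ (cf.~\cite[Section~6]{strickland:tambara}) are needed: pullback of $f$ along a target orbit inclusion $Y_j \hookrightarrow Y$ realizes, under the additive isomorphism $\uR(Y) \cong \prod_j \uR(Y_j)$, the projection of $N_f$ onto its $j$-component; and disjoint union in the source is sent to multiplication in the target, in the sense that for $g \colon \bigsqcup_k Z_k \to Z$ and an element $(b_k) \in \prod_k \uR(Z_k) \cong \uR(\bigsqcup_k Z_k)$ one has $N_g((b_k)) = \prod_k N_{g|_{Z_k}}(b_k)$ in $\uR(Z)$. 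Combining these, the $j$-component of $N_f(R^G_X(x))$ in $\prod_j \uR(Y_j)$ equals
\[ \prod_{i \, : \, j(i) = j} N_{f|_{X_i}}(R^G_{X_i}(x)). \]

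Finally, an element of a finite product of rings is a unit if and only if each of its components is, and a product of finitely many elements of a commutative ring is a unit if and only if each factor is. Hence the unit condition on $N_f(R^G_X(x))$ in $\uR(Y)[R^G_Y(x)^{-1}] \cong \prod_j \uR(Y_j)[R^G_{Y_j}(x)^{-1}]$ is equivalent to $N_{f|_{X_i}}(R^G_{X_i}(x))$ being a unit in $\uR(Y_{j(i)})[R^G_{Y_{j(i)}}(x)^{-1}]$ for every orbit $X_i$ of $X$. By the first step applied to each orbit-to-orbit map $f|_{X_i} \colon G \cdot x_i \to G \cdot f(x_i)$, this is exactly the existence of $\tilde{N}_{f|_{G \cdot x_i}}$ for each $x_i \in X$, as desired. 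The main obstacle will be rigorously justifying the multiplicative decomposition of $N_g$ along source disjoint unions within the bispan formalism; once that is in hand, everything else is either a direct consequence of Theorem~\ref{thm alg preservation} or elementary commutative algebra.
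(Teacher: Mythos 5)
Your proof is correct and follows essentially the same strategy as the paper's: decompose both source and target into $G$-orbits and reduce the existence question orbit-by-orbit via the product structure in the bispan category. The main presentational difference is that you reduce everything upfront to the condition that $N_f(R^G_X(x))$ be a unit in $\Loc{\uR(Y)}{R^G_Y(x)}$ (the correct generalization of formula~(\ref{formula N tilde}) to arbitrary $f$) and then manipulate unit conditions, while the paper argues more directly about the existence of the maps $\tilde{N}_{f_i}$, $\tilde{N}_{f_{ij}}$, handling the case of orbits of $Y$ outside $\im(f)$ as a separate step rather than via your (perfectly valid) observation that empty products are units. Your ``main obstacle''---that $N$ sends disjoint unions in the source to products in the target---is exactly the fact the paper invokes: the coproduct $\coprod_j G/K_{ij}$ is the \emph{product} of the $G/K_{ij}$ in ${bispan}^G$ (see \cite[Prop.~7.5~(i)]{tambara}), and product-preservation then gives the formula $N_{f_i}((a_j)_j) = \prod_j N_{f_{ij}}(a_j)$; so this step is unproblematic. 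Likewise, the base-change relation $R^Y_{Y_j} \circ N_f = N_{f_j} \circ R^X_{X_j}$ that you use to project onto target components is one of the Tambara axioms. Making the unit condition the organizing principle is arguably a cleaner way to justify the ``only if'' direction of Step 3 of the paper's proof, which is stated somewhat tersely there.
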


\begin{proof}
Choose an orbit decomposition $Y \cong \coprod_i G/H_i$ and set $X_i := f^{-1}(G/H_i)$.
Now $f$ can be written as a sum of maps
\[ f_i \colon X_i \to G/H_i \]
such that $f_i$ is the unique map $\emptyset \to G/H_i$ if $G/H_i \not\subseteq \im(f)$ and such that $f_i$ is
a sum of canonical surjections
\[ f_{ij} \colon G/K_{ij} \to G/H_i \]
induced by subgroup inclusions $K_{ij} \leq H_i$ if $G/H_i \subseteq \im(f)$. Under these identifications, the
maps $f_{ij}$ are precisely the restrictions of $f$ to orbits of $X$.\\
The proof now proceeds in three steps. \\
\emph{Step 1:}
By the universal property of the product (of underlying multiplicative monoids), a potential norm map defined
by $f$ is given componentwise by the potential norms induced by the maps $f_i \colon X_i \to G/H_i$.
Consequently, $\tilde{N}_f$ exists if and only if $\tilde{N}_{f_i}$ exists for all $i$.
\\
\emph{Step 2:} We claim that for all $i$ such that $G/H_i \not\subseteq \im(f)$, the norm $\tilde{N}_{f_i}$
associated to $f_i \colon \emptyset \to G/H_i$ exists unconditionally. Indeed, the map $N_{f_i} \colon
0 = \uR(\emptyset) \to \uR(G/H_i)$ is just the inclusion of the multiplicative unit of the ring $\uR(H_i)$, so
$\tilde{N}_{f_i}$ exists and is given as the inclusion of the multiplicative unit of the ring
$\Loc{\uR(H_i)}{(\Res^G_{H_i}(x))}$. Thus, the existence of $\tilde{N}_{f}$ only depends on the maps $f_i$ for
those $i$ such that $G/H_i \subseteq \im(f)$. \\
\emph{Step 3:} We are left to show that a map $f_i \colon \coprod_{j} G/K_{ij} \to G/H_i$ gives rise to a
norm map if and only if all of the maps $f_{ij} \colon G/K_{ij} \to G/H_i$ do. The coproduct $\coprod_{j}
G/K_{ij}$ is the product of the $G/K_{ij}$ in the category of bispans with $k$-th projection map given by
\begin{center}
$ \xymatrix@M=8pt@R=1.5pc{
	\coprod_{j} G/K_{ij} & G/K_{ik} \ar@{_(->}[l] \ar[r]^{\id} & G/K_{ik} \ar[r]^{\id} & G/K_{ik}
}$
\end{center}
(see \cite[Prop.~7.5 (i)]{tambara}), so under the identification
$\uR(\coprod_{j} G/K_{ij}) \cong \prod_{j} \uR(K_{ij})$, the norm $N_{f_i}$ is of the form
\[ \prod_{j} \uR(K_{ij}) \to \uR(H_i), \quad (a_j)_j \mapsto \prod_j N_{f_{ij}}(a_j). \]
The analogous statement holds for the norms of $\Loc{\uR}{x}$, provided they exist. Thus, $\tilde{N}_{f_i}$ exists if and only
if $\tilde{N}_{f_{ij}}$ exists for all $j$.
\end{proof}

We would like to use Theorem~\ref{thm alg preservation} in order to show that $\Loc{\Ablpi}{e_L}$ is an incomplete Tambara functor
with norms as described in Theorem~\ref{local Thm A}. However, Theorem~\ref{thm alg preservation} is a statement about Tambara
functors structured by indexing systems, or equivalently (see Theorem~\ref{iso IS D}), structured by wide, pullback-stable, finite
coproduct-complete subcategories $D \subseteq \Set^G$. Thus, we first need to see that the maps $f$ which give rise to norm maps
form such a category $D$.

\begin{definition} \label{def D_L}
For a $\fp$-perfect subgroup $L \leq G$, let $D_L \subseteq \Set^G$ be the wide subgraph consisting of all
the maps of finite $G$-sets $f \colon X \to Y$ such that the orbit $G_{f(x)}/G_x$ obtained from stabilizer
subgroups satisfies the conditions of Theorem~\ref{local Thm A} for all points $x \in X$.
\end{definition}

\begin{prop} \label{D_L comes from I_L}
The subgraph $D_L$ is a wide, pullback-stable, finite coproduct-complete subcategory of $\Set^G$, hence corresponds to an indexing
system $\cI_L$ under the equivalence of posets of Theorem~\ref{iso IS D}.
\end{prop}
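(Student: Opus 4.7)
The plan is to verify directly that $D_L$ satisfies the three conditions in Definition~\ref{def D} and is closed under composition, and then to invoke Theorem~\ref{iso IS D} to extract the indexing system $\cI_L$. Throughout, the key observation is that membership of $f\colon X\to Y$ in $D_L$ is a purely local condition at each point $x\in X$, expressed in terms of the stabilizer pair $G_x\leq G_{f(x)}$: namely, that every $G$-conjugate of $L$ lying in $G_{f(x)}$ is already contained in $G_x$. I will freely use the reformulation $(\star)$ of Theorem~\ref{local Thm A} and the fact that conjugation permutes subgroups, preserving both containment and conjugacy class.

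First I would check that $D_L$ is a wide subcategory. Wideness and the presence of identities are immediate, since for an identity map the stabilizer pair is $(G_x,G_x)$ and $(\star)$ holds vacuously. For composition, given $f\colon X\to Y$ and $g\colon Y\to Z$ in $D_L$ and a point $x\in X$, the chain of stabilizers $G_x\leq G_{f(x)}\leq G_{g(f(x))}$ lets me propagate $(\star)$: any $L'\leq G_{g(f(x))}$ conjugate in $G$ to $L$ lies in $G_{f(x)}$ by the condition for $g$ at $f(x)$, and then lies in $G_x$ by the condition for $f$ at $x$, so $g\circ f\in D_L$.

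Next I would verify pullback-stability. Consider a pullback square with $f\colon X\to Y$ in $D_L$, base change $g\colon Z\to Y$, and let $P=X\times_Y Z$ with projections $g'\colon P\to X$ and $f'\colon P\to Z$. A point $p\in P$ corresponds to a pair $(x,z)$ with $f(x)=g(z)$, and $G_p=G_x\cap G_z$. Given $L'\leq G_z=G_{f'(p)}$ conjugate to $L$, the equality $g(z)=f(x)$ gives $G_z\leq G_{f(x)}$, so $L'\leq G_{f(x)}$; the hypothesis on $f$ at $x$ then forces $L'\leq G_x$, hence $L'\leq G_x\cap G_z=G_p$, proving $f'\in D_L$. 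Finite coproduct-completeness is then immediate: since stabilizers of points in a disjoint union are unchanged, $f_1\sqcup f_2\in D_L$ whenever each $f_i\in D_L$, and the empty map is vacuously in $D_L$; coproducts in $D_L$ therefore exist and are created in $\Set^G$.

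Having established that $D_L$ is a wide, pullback-stable, finite coproduct-complete subcategory of $\Set^G$, I would conclude by quoting Theorem~\ref{iso IS D}, which produces a unique indexing system $\cI_L$ with $D_L=\Set^G_{\cI_L}$. Concretely, $\cI_L(H)$ is the full symmetric monoidal subcategory of $\Set^H$ whose objects are those $H$-sets all of whose orbits $H/K$ satisfy $(\star)$. The only mildly delicate step is the verification of pullback-stability, since one must track how stabilizers intersect under base change; the argument above reduces it to the observation $G_z\leq G_{f(x)}$ coming from $f(x)=g(z)$, after which the hypothesis on $f$ applies verbatim. I do not foresee a substantive obstacle beyond this bookkeeping.
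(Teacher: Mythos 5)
Your proof is correct, and it takes a genuinely more direct route than the paper's. The paper first establishes (in Proposition~\ref{prop projections}) that a norm $\tilde N_f$ exists iff it exists for all orbit restrictions of $f$, and then uses this to reduce each of the three conditions to statements about canonical surjections $G/K\to G/H$; for pullback-stability the paper additionally invokes the double coset decomposition $R^H_A(H/K)\cong\coprod_{[h]}A/(A\cap{}^h K)$ and checks $(\star)$ on each summand. You instead observe that membership of $f$ in $D_L$ is, by Definition~\ref{def D_L}, already a pointwise condition on the stabilizer pair $G_x\leq G_{f(x)}$, and that all three closure properties then follow from elementary stabilizer identities: $G_x\leq G_{f(x)}\leq G_{g(f(x))}$ for composition, $G_{(x,z)}=G_x\cap G_z$ together with $G_z\leq G_{g(z)}=G_{f(x)}$ for pullbacks, and invariance of stabilizers under coproduct inclusions. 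This avoids any appeal to Proposition~\ref{prop projections} and to the double coset formula and is, if anything, cleaner; the paper's route has the side benefit of establishing Proposition~\ref{prop projections}, which is reused elsewhere (in the proof of maximality in Theorem~\ref{local Thm D}), but as a proof of the present proposition alone your argument is self-contained and complete.
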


Explicitly, the admissible $H$-sets in $\cI_L$ are the objects over $G/H$ in $D_L$, see \cite[Lemma~3.19]{BH:ITF}. The three lemmas
below constitute the proof.

\begin{lemma}
The graph $D_L$ is a wide subcategory of $\Set^G$.
\end{lemma}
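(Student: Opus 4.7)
The plan is to verify the two defining properties of a wide subcategory: that $D_L$ contains all objects, all identity morphisms, and is closed under composition. Wideness on objects is automatic by the definition of $D_L$ as a wide subgraph. For identity morphisms $\id_X \colon X \to X$, note that for every $x \in X$ one has $G_{\id(x)} = G_x$, so the condition $(\star)$ from Theorem~\ref{local Thm A} is vacuously satisfied: any $G$-conjugate of $L$ contained in $G_x$ is trivially contained in $G_x$.

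The only real content is closure under composition. Suppose $f \colon X \to Y$ and $g \colon Y \to Z$ both lie in $D_L$. Fix $x \in X$ and set $y := f(x)$, $z := g(y) = (gf)(x)$. Since $f$ and $g$ are $G$-equivariant, we have the chain of stabilizer inclusions $G_x \leq G_y \leq G_z$. I would then argue as follows: let $L' \leq G_z$ be any subgroup conjugate in $G$ to $L$. Applying condition $(\star)$ to $g$ at $y$ (with $K = G_y$, $H = G_z$) gives $L' \leq G_y$; then applying $(\star)$ to $f$ at $x$ (with $K = G_x$, $H = G_y$) gives $L' \leq G_x$. Hence the orbit $G_{(gf)(x)}/G_x$ satisfies $(\star)$, showing $gf \in D_L$.

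I do not anticipate any real obstacle here: the argument is a direct transitivity check on the condition $(\star)$, which is phrased in a manner that is manifestly stable under the stabilizer inclusions coming from composition. The main thing to be careful about is simply to write the condition in terms of $(K,H) = (G_x, G_{f(x)})$ correctly and to keep track of which map one is applying it to.
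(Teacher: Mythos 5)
Your argument is correct and its core is the same as the paper's: closure under composition reduces to the transitivity of condition $(\star)$ along the chain of stabilizer inclusions $G_x \leq G_{f(x)} \leq G_{gf(x)}$. The only cosmetic difference is that the paper first cites Proposition~\ref{prop projections} to reduce formally to canonical surjections $G/A \to G/B \to G/C$ between orbits before invoking $(\star)$, whereas you apply the stabilizer condition directly pointwise at each $x \in X$, which is just as valid and arguably cleaner; both routes end with the same two-step chaining of $(\star)$. One small remark: you could also note, as the paper does, that associativity is automatic once composition is verified, since $D_L$ sits inside $\Set^G$.
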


\begin{proof}
It is wide by definition and clearly contains all identities. Once we have shown that it is closed
under composition, associativity follows from associativity in $\Set^G$. Let $f \colon S \to T$
and $g \colon T \to U$ be admissible maps of $G$-sets. By Proposition~\ref{prop projections},
we may assume that $S=G/A, T=G/B$ and $U=G/C$ are transitive $G$-sets for nested subgroups $A \leq
B \leq C \leq G$, and $f,g$ are the canonical surjections. Thus, it suffices to show that if $C/B$ and $B/A$ are admissible, so is
$C/A$. This is immediate from the condition ($\star$) given in Theorem~\ref{local Thm A}.
\end{proof}

\begin{lemma}
The subcategory $D_L$ is finite coproduct-complete.
\end{lemma}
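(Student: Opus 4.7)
The plan is to verify the two ingredients of Definition~\ref{def D}.3): that $D_L$ contains the coproducts of its objects computed in $\Set^G$, and that the structural coproduct maps (injections and induced maps out of coproducts) all lie in $D_L$. Since $D_L$ is wide, every finite $G$-set is already an object of $D_L$, so the only content is the behavior of morphisms.

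First I would check the coproduct injections. Given finitely many finite $G$-sets $X_1,\dots,X_n$ and the canonical inclusion $\iota_k\colon X_k\hookrightarrow \coprod_{i} X_i$, every point $x\in X_k$ has the same stabilizer as $\iota_k(x)$, so $G_{\iota_k(x)}/G_x$ is the trivial one-point orbit. Trivial orbits vacuously satisfy condition $(\star)$ of Theorem~\ref{local Thm A} (take $K=H$), so $\iota_k\in D_L$.

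Next, given morphisms $f_i\colon X_i\to Y$ in $D_L$, I would show that the induced map $f=\coprod_i f_i\colon \coprod_i X_i\to Y$ lies in $D_L$. This is immediate from the pointwise definition of $D_L$: for any $x\in X_i\subseteq\coprod_j X_j$, the orbit $G_{f(x)}/G_x$ equals $G_{f_i(x)}/G_x$, which is admissible by assumption on $f_i$. Combined with the previous paragraph and the fact that $D_L$ is a subcategory, this shows that the coproduct in $\Set^G$ together with its injections satisfies the universal property inside $D_L$ as well: given morphisms $f_i\in D_L$, the unique factoring map in $\Set^G$ automatically lies in $D_L$. Finally, the empty coproduct $\emptyset$ is an object of $D_L$, and the unique map $\emptyset\to Y$ vacuously belongs to $D_L$ since there are no orbits to check.

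No serious obstacle is anticipated; the argument is essentially a bookkeeping exercise once one notes that admissibility of a map is a purely orbit-local condition on the source, so it is automatically preserved under forming disjoint unions in the source. The only mildly delicate point is the injections, and that is handled by the observation that trivial orbits $G_{f(x)}/G_x=\ast$ always satisfy $(\star)$.
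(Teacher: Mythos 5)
Your proof is correct. The paper disposes of this lemma in one line by citing Proposition~\ref{prop projections} (the orbit-decomposition result for norms of localized Tambara functors), whereas you argue directly from the pointwise definition of $D_L$: coproduct injections lie in $D_L$ since they preserve stabilizers and hence give trivial orbits $G_{\iota_k(x)}/G_x$, which are always admissible; and the universal map out of a coproduct lies in $D_L$ because the defining condition on $f$ is checked one point of the source at a time, so it is automatically inherited by $\coprod_i f_i$ from the $f_i$. This direct route is arguably more transparent than the paper's appeal to Proposition~\ref{prop projections}, which is a statement about existence of norm maps rather than about the combinatorics of $D_L$ itself, though of course the two are closely linked by construction. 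One small stylistic note: you could streamline the injection step by simply observing that every indexing system contains all trivial $H$-sets by Definition~\ref{def indexing system}~(i), rather than re-verifying condition $(\star)$ with $K=H$; but your check is equally valid.
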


\begin{proof}
This follows directly from Proposition~\ref{prop projections}.
\end{proof}

\begin{lemma}
The subcategory $D_L$ is pullback-stable.
\end{lemma}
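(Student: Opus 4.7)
The plan is to reduce to the transitive case, use the explicit orbit decomposition of pullbacks of $G$-sets given in the unnamed lemma preceding Corollary~\ref{multiplicative DCS}, and then verify the condition $(\star)$ of Theorem~\ref{local Thm A} by a conjugation argument.

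More concretely, I would first invoke Proposition~\ref{prop projections} to observe that membership in $D_L$ is detected on orbits. Since pullbacks in $\Set^G$ are computed componentwise and commute with coproducts in each variable, it suffices to show that the pullback of a canonical surjection $f\colon G/K \to G/H$ along any map of transitive $G$-sets $g\colon G/N \to G/H$ yields, on each orbit, a map that lies in $D_L$. After choosing a representative so that $N \leq H$ and $g$ is the canonical surjection, the referenced lemma gives an orbit decomposition
\[ G/K \times_{G/H} G/N \;\cong\; \coprod_{n \in K \backslash H / N} G/(K \cap {}^n N), \]
and identifies the induced map to $G/N$ on the $n$-th summand (after the isomorphism $G/(K \cap {}^n N) \cong G/({}^{n^{-1}}\! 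K \cap N)$ given by conjugation by $n^{-1}$) with the canonical surjection associated to the subgroup inclusion $({}^{n^{-1}}\! K) \cap N \leq N$.

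Now I would verify $(\star)$ for each inclusion $({}^{n^{-1}}\! K)\cap N \leq N$. Suppose $L' \leq N$ is conjugate in $G$ to $L$. Since $N \leq H$ and $n \in H$, the conjugate ${}^n L'$ still lies inside $H$ and is still $G$-conjugate to $L$. Applying $(\star)$ for the original inclusion $K \leq H$ yields ${}^n L' \leq K$, equivalently $L' \leq {}^{n^{-1}}\! K$; combining this with $L' \leq N$ gives $L' \leq ({}^{n^{-1}}\! K) \cap N$, as required.

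I do not expect any serious obstacle: the only point requiring care is tracking the conjugation by $n^{-1}$ when identifying the orbit map with a canonical surjection, so that the double-coset index $n \in H$ really does supply the conjugating element needed to transport $L'$ back into a situation where the hypothesized $(\star)$ for $K \leq H$ applies. Once that bookkeeping is in place, the implication is immediate and essentially formal.
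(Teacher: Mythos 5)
Your proof is correct and takes essentially the same route as the paper's. The paper phrases the orbit decomposition as $R^H_A(H/K)\cong\coprod_{[h]\in A\backslash H/K}A/(A\cap{}^h K)$ and verifies $(\star)$ by noting that $H/{}^hK$ is still admissible, while you write the double coset as $K\backslash H/N$ and conjugate $L'$ by $n$ before applying $(\star)$ for $K\leq H$; under the substitution $h=n^{-1}$, $A=N$, these are the identical argument.
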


\begin{proof}
The problem reduces to canonical surjections between orbits by Proposition~\ref{prop projections}.
We have to show that if the canonical surjection $G/K \to G/H$ in the following pullback diagram is admissible, then so is its
pullback along the canonical map $G/A \to G/H$, where $A, K \leq H$ are subgroups.
\begin{center}
$ \xymatrix@M=8pt@R=1.5pc{
	P \ar[d] \ar[r] & G/K \ar[d] \\
	G/A \ar[r] & G/H
}$
\end{center}
This in turn amounts to verifying the condition ($\star$) of Theorem~\ref{local Thm A} for all summands of
\[ R^H_A(H/K) \cong \coprod_{\lbrack h \rbrack \in A\backslash H/K} A/(A \cap {\,}^h \! K). \]
Note that since $H/K$ is admissible, so are the isomorphic $H$-sets $H/{\,}^h \! K$ for all $h \in H$. Fix $L' \leq A$ such that
$L' \sim_G L$. We have to show that $L' \leq A \cap {\,}^h \! K$. But $L'$ is in $H$ and $H/{\,}^h \! K$ is admissible, so $L' \leq
{\,}^h \! K$ and hence $L' \leq A \cap {\,}^h \! K$.
\end{proof}

We obtain (the locally enhanced) Theorem \ref{Thm D}:

\begin{thm} \label{local Thm D}
Let $\fp$ be a collection of primes. Let $L \leq G$ be a $\fp$-perfect subgroup and let $e_L \in \AGpi$ be the
corresponding primitive idempotent.
Then the following hold:
\begin{enumerate}[i)]
  \item The admissible sets for $e_L$ assemble into an indexing system $\cI_L$ such that $\Loc{\Ablpi}{e_L}$ is an $\cI_L$-Tambara
  functor under $\Ablpi$.
  \item In the poset of indexing systems, $\cI_L$ is maximal among the elements that satisfy i).
  \item The map $\Ablpi \to \Loc{\Ablpi}{e_L}$ is the localization of $\Ablpi$ at $e_L$ in the category of $\cI_L$-Tambara functors.
\end{enumerate}
\end{thm}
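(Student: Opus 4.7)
The three parts of Theorem~\ref{local Thm D} all follow by combining work already in place: Proposition~\ref{D_L comes from I_L} constructs the candidate indexing system $\cI_L$, Theorem~\ref{local Thm A} characterizes precisely which norms are inherited, and Theorem~\ref{thm alg preservation} packages this into the Tambara-functor statement with its universal property. My plan is to assemble these three ingredients.

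For part i), the plan is to invoke Proposition~\ref{D_L comes from I_L}, which produces $\cI_L$ as the indexing system corresponding under Theorem~\ref{iso IS D} to the wide, pullback-stable, finite coproduct-complete subcategory $D_L \subseteq \Set^G$. I then need to check the hypothesis of Theorem~\ref{thm alg preservation}, namely that for every transitive admissible $H$-set $H/K \in \cI_L(H)$ the element $N_K^H(R^G_K(e_L))$ divides a power of $R^G_H(e_L)$ in $A(H)_{(\fp)}$. But by the definition of $\cI_L$ this transitive set satisfies ($\star$), and the proof of Theorem~\ref{local Thm A} already established (via Lemma~\ref{lemma little lemma} and the mark computation in Proposition~\ref{prop technical heart}) that ($\star$) is equivalent to the identity $N_K^H(R^G_K(e_L)) \cdot R^G_H(e_L) = R^G_H(e_L)$, which is exactly the required divisibility. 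Theorem~\ref{thm alg preservation} then upgrades $\Loc{\Ablpi}{e_L}$ to an $\cI_L$-Tambara functor under $\Ablpi$.

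For part ii), the plan is to argue by contradiction with the characterization in Theorem~\ref{local Thm A}. Suppose $\cI$ is any indexing system strictly containing $\cI_L$ for which $\Loc{\Ablpi}{e_L}$ inherits an $\cI$-Tambara functor structure under $\Ablpi$. Pick a transitive admissible $H$-set $H/K \in \cI(H) \setminus \cI_L(H)$. Since $\cI$ structures the localization, the norm $\tilde{N}_K^H$ exists as a map of multiplicative monoids, so Theorem~\ref{local Thm A} forces condition ($\star$) to hold for $K \leq H$. But this puts $H/K$ in $\cI_L(H)$, a contradiction. Hence $\cI \subseteq \cI_L$.

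For part iii), the universal property is already delivered by the ``if'' direction of Theorem~\ref{thm alg preservation}: once the divisibility hypothesis is verified (as in part i)), its proof shows that the canonical map $\Ablpi \to \Loc{\Ablpi}{e_L}$ is the localization at $e_L$ in the category of $\cI_L$-Tambara functors, via the componentwise universal property of ring localization assembled into a morphism of $\cI_L$-Tambara functors through the formula~(\ref{formula N tilde}). The main subtlety, and the only place where genuine work beyond quotation is needed, is part ii): I must ensure that maximality is phrased with respect to \emph{all} indexing systems structuring the localization under $\Ablpi$, not merely those arising from $N_\infty$-operads; but this is precisely why the argument is carried out purely in the language of indexing systems and ($\star$), where the equivalence is sharp on both sides.
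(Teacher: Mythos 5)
Your proposal is correct and takes essentially the same approach as the paper: part i) via Proposition~\ref{D_L comes from I_L} plus Theorem~\ref{thm alg preservation}, part iii) directly from Theorem~\ref{thm alg preservation}, and part ii) from the characterization in Theorem~\ref{local Thm A} reduced to transitive admissible sets (which the paper handles by citing Proposition~\ref{prop projections}). The only cosmetic difference is that you spell out the contradiction argument for maximality, whereas the paper leaves it as a one-line citation.
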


\begin{proof}
Proposition \ref{D_L comes from I_L} shows that $\cI_L$ is an indexing system. Then the Green ring
$\Loc{\Ablpi}{e_L}$ equipped with the norm maps given by Theorem~\ref{local Thm A} is an $\cI_L$-Tambara
functor by \cite[Thm.~4.13]{BH:ITF}, see the proof of Theorem~\ref{thm alg preservation} for details. This
proves part~i). Theorem~\ref{thm alg preservation} also implies part iii).
Part ii) follows from Theorem~\ref{local Thm A} together with Proposition~\ref{prop projections}.
\end{proof}

Finally, we describe the maximal incomplete Tambara functor structure which is preserved by the idempotent splitting of the Green
ring $\Ablpi$ stated in Proposition~\ref{Dress splitting}.

\begin{lemma}
The (levelwise) intersection of a finite number of indexing systems is an indexing system. \qed
\end{lemma}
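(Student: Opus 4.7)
The plan is to verify directly that the levelwise intersection $\cI := \bigcap_{i=1}^n \cI_i$, defined by $\cI(H) := \bigcap_i \cI_i(H)$ as a full subcategory of $\Set^H$, satisfies all three conditions of Definition~\ref{def indexing system}, together with the required functoriality in the orbit category. Since finiteness of the intersection is not really used in any essential way beyond ensuring non-triviality and preservation of finite-limit / finite-coproduct conditions, the argument is largely a matter of checking that each axiom is preserved under passing to intersections.

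First I would address functoriality: for any morphism $G/H' \to G/H$ in $\OrbG$, each $\cI_i$ provides a strong symmetric monoidal restriction functor $\cI_i(H) \to \cI_i(H')$ induced by restriction of $H$-sets to $H'$-sets. Since these are all restrictions of the single underlying restriction functor $\Set^H \to \Set^{H'}$, they agree on the intersection, and so restrict to a strong symmetric monoidal functor $\cI(H) \to \cI(H')$. Contravariant functoriality in the orbit category is then inherited from the functoriality of each $\cI_i$.

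Next I would verify the three axioms. For (i), trivial $H$-sets lie in every $\cI_i(H)$ by assumption, hence in $\cI(H)$; fullness is automatic from fullness of each $\cI_i(H)$ in $\Set^H$; and the symmetric monoidal structure (disjoint union with empty unit) is preserved because each $\cI_i(H)$ is closed under it, and a finite disjoint union of objects in every $\cI_i(H)$ lies again in every $\cI_i(H)$. For (ii), given a finite diagram in $\cI(H)$, its limit computed in $\Set^H$ lies in each $\cI_i(H)$ by closure under finite limits, hence in $\cI(H)$. For (iii), suppose $H/K \in \cI(H)$ and $T \in \cI(K)$; then for each $i$ we have $H/K \in \cI_i(H)$ and $T \in \cI_i(K)$, so self-induction in $\cI_i$ yields $\Ind_K^H(T) \in \cI_i(H)$, and taking the intersection gives $\Ind_K^H(T) \in \cI(H)$.

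I do not expect any genuine obstacle; the only minor point worth flagging is checking that the intersection is a well-defined symmetric monoidal subcategory rather than just a set-theoretic intersection of object collections, but this is immediate since all structure (unit, tensor product, associators, etc.) is inherited from the ambient $\Set^H$ and each $\cI_i(H)$ is a full symmetric monoidal subcategory containing that structure. Finiteness of the intersection is used only in the mild sense that a finite intersection of full subcategories of $\Set^H$ containing all trivial $H$-sets is again non-empty and contains the monoidal unit; in fact the same argument works for arbitrary intersections.
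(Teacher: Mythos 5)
Your proof is correct; the paper states this lemma without proof (the immediate \qed signals it is left to the reader), and your direct verification of Definition~\ref{def indexing system} axiom by axiom is exactly the argument that is being elided. Your closing observation that finiteness is inessential and arbitrary intersections also work is also correct, though the finite case is all that is needed for Corollary~\ref{local cor alg splitting}.
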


\begin{notation}
Write $\cI$ for the indexing system
\[ \cI := \bigcap_{(L) \leq G} \cI_L \]
where the intersection is over all conjugacy classes of $\fp$-perfect subgroups of $G$, and the indexing systems $\cI_L$ are the
ones given by Theorem~\ref{local Thm D}.
\end{notation}

For each $\fp$-perfect $L \leq G$, the $\cI_L$-Tambara functor $\Loc{\Ablpi}{e_L}$ is an $\cI$-Tambara functor by forgetting 
structure. Theorem~\ref{local Thm A} provides an explicit description of the admissible sets of $\cI$.

\begin{lemma} \label{admissible sets of I}
Let $K \leq H \leq G$, then $H/K$ is an admissible set for $\cI$ if and only if for all $\fp$-perfect $L \leq
H$, $L$ is contained in $K$.
\qed
\end{lemma}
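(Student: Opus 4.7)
The plan is to unpack the definition of $\cI$ as an intersection of the $\cI_L$, translate $\cI_L$-admissibility via the condition $(\star)$ of Theorem~\ref{local Thm A}, and then invoke the conjugation-invariance of the $\fp$-residual subgroup (Lemma~\ref{basic facts residual subgroup}~b)) to simplify the resulting condition.

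First I would observe that by definition of the intersection, $H/K$ lies in $\cI(H)$ if and only if it lies in $\cI_L(H)$ for every $G$-conjugacy class of $\fp$-perfect subgroups $L \leq G$. By Proposition~\ref{D_L comes from I_L} and Theorem~\ref{local Thm A}, the latter is equivalent to: for each $\fp$-perfect $L \leq G$, every subgroup $L' \leq H$ that is $G$-conjugate to $L$ satisfies $L' \leq K$. Quantifying over all representatives $L$, this collapses to the single condition that every subgroup $L' \leq H$ which is $G$-conjugate to some $\fp$-perfect subgroup of $G$ must be contained in $K$.

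Next I would identify the set $\{L' \leq H \mid L' \sim_G L \text{ for some } \fp\text{-perfect } L \leq G\}$ with the set of $\fp$-perfect subgroups of $H$. The forward inclusion uses Lemma~\ref{basic facts residual subgroup}~b): if $L$ is $\fp$-perfect and $L' = {}^g L$, then $\Opi{L'} = {}^g\Opi{L} = {}^g L = L'$, so $L'$ is itself $\fp$-perfect (and is automatically a subgroup of $G$ since $H \leq G$). The reverse inclusion is immediate, since any $\fp$-perfect subgroup $L' \leq H$ is a $\fp$-perfect subgroup of $G$ and is $G$-conjugate to itself.

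Combining these two steps yields the equivalence: $H/K \in \cI(H)$ if and only if every $\fp$-perfect subgroup of $H$ is contained in $K$. The only substantive ingredient is the conjugation-invariance of $O^{\fp}(-)$, and it already appears in the excerpt, so no further obstacle is expected; the argument is essentially a bookkeeping exercise assembling Theorem~\ref{local Thm A} and Lemma~\ref{basic facts residual subgroup}.
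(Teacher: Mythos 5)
Your proof is correct and fills in exactly the unwinding that the paper leaves implicit (the lemma is stated with a \qed{} and no proof). The two ingredients you use — Theorem~\ref{local Thm A} to characterize $\cI_L$-admissibility and the conjugation-invariance of $\Opi{-}$ from Lemma~\ref{basic facts residual subgroup}~b) to identify $\{L' \leq H : L' \sim_G L \text{ for some } \fp\text{-perfect } L \leq G\}$ with the set of $\fp$-perfect subgroups of $H$ — are precisely what is needed, and the argument has no gaps.
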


We can now restate Corollary~\ref{cor alg splitting}.

\begin{cor} \label{local cor alg splitting}
The localization maps $\Ablpi \to \Loc{\Ablpi}{e_L}$ assemble into an isomorphism of $\cI$-Tambara functors
\[ \Ablpi \to \prod_{(L) \leq G \; \fp \mathrm{-perfect}} \Loc{\Ablpi}{e_L}. \]
\end{cor}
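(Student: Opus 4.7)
The plan is to upgrade the isomorphism of Green rings from Proposition~\ref{Dress splitting} to an isomorphism of $\cI$-Tambara functors. The underlying map is already an isomorphism; the whole task reduces to explaining (i) that both sides carry compatible $\cI$-Tambara structures and (ii) that the canonical map is compatible with the norms $\tilde{N}_K^H$ for every admissible $H/K \in \cI(H)$.

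First, I would observe that the right hand side naturally carries an $\cI$-Tambara functor structure. By Theorem~\ref{local Thm D}~i), each factor $\Loc{\Ablpi}{e_L}$ is an $\cI_L$-Tambara functor, and since $\cI = \bigcap_{(L)} \cI_L \subseteq \cI_L$ for every perfect subgroup $L$, each factor is in particular an $\cI$-Tambara functor by forgetting structure. Products in the category of $\cI$-Tambara functors exist and are computed levelwise on the underlying Mackey functors (this is immediate from Definition~\ref{def ITF}, since a product of product-preserving functors on $\mathrm{bispan}_D^G$ is again product-preserving). Hence $\prod_{(L)} \Loc{\Ablpi}{e_L}$ inherits a canonical $\cI$-Tambara structure whose projections are maps of $\cI$-Tambara functors.

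Next, I would verify that each localization map $\Ablpi \to \Loc{\Ablpi}{e_L}$ is a morphism of $\cI$-Tambara functors: Theorem~\ref{local Thm D}~iii) gives this as a map of $\cI_L$-Tambara functors, and restricting the structure to the smaller indexing system $\cI$ keeps it a map of $\cI$-Tambara functors. By the universal property of the product in the category of $\cI$-Tambara functors, these assemble into a morphism $\Ablpi \to \prod_{(L)} \Loc{\Ablpi}{e_L}$ of $\cI$-Tambara functors.

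Finally, since Proposition~\ref{Dress splitting} asserts that this morphism is an isomorphism on the underlying Green rings, and the forgetful functor from $\cI$-Tambara functors to Green rings (or, further, to Mackey functors) reflects isomorphisms -- an inverse Mackey functor map automatically commutes with the norms since it does so on the underlying multiplicative monoids -- the morphism is an isomorphism of $\cI$-Tambara functors. There is no genuine obstacle: the entire content of the corollary is a packaging of Theorem~\ref{local Thm D} together with the definition of $\cI$ as the intersection of the $\cI_L$, and the only mild bookkeeping step is the observation that products in $\cI$-Tambara functors are levelwise.
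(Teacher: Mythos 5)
Your proof is correct and takes essentially the same approach as the paper: the Green-ring isomorphism from Proposition~\ref{Dress splitting}, the observation that each localization map is a morphism of $\cI$-Tambara functors (via restriction of structure along $\cI\subseteq\cI_L$), and the fact that products of $\cI$-Tambara functors are computed levelwise (the paper cites Strickland's Prop.~10.1, you argue it directly from Definition~\ref{def ITF}). Your explicit remark that the forgetful functor to Mackey functors reflects isomorphisms is a useful elaboration of a step the paper leaves implicit.
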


\begin{proof}
It is an isomorphism of Green rings by Proposition~\ref{Dress splitting}. Moreover, each of the localization maps $\Ablpi \to
\Loc{\Ablpi}{e_L}$ is a map of $\cI$-Tambara functors, and the product in the category of $\cI$-Tambara functors is computed
levelwise, see \cite[Prop.~10.1]{strickland:tambara}.
\end{proof}

\begin{rem}
We point out a possible alternative to our proof of Corollary~\ref{local cor alg splitting}. Blumberg and Hill generalized parts of
Nakaoka's theory of ideals of Tambara functors \cite{nakaoka:ideals} to the setting of incomplete Tambara functors, see
\cite[Section~5.2]{BH:ITF}. The author is confident that one could similarly generalize Nakaoka's splitting result
\cite[Prop.~4.15]{nakaoka:ideals} to the incomplete setting. It would state that an $\cI$-Tambara functor
$\uR$ splits non-trivially as a product of $\cI$-Tambara functors if and only if for each admissible set $X$ of $\cI$, there are
non-zero elements $a, b \in \uR(X)$ such that $a + b = 1$ and $\langle a \rangle \cdot \langle b \rangle = 0$. Such a result would
reprove our Corollary~\ref{local cor alg splitting}, using that the restrictions of the primitive idempotents $e_L$ along admissible
maps never become zero. We leave the details to the interested reader.
\end{rem}

\subsection{The $N_\infty$ ring structure} \label{subsection:htpy}
We return to the situation of Question~\ref{main Q htpy}, lift our algebraic results to the category of $G$-spectra and prove the
locally enhanced versions of Corollary~\ref{cor htpy}, Corollary~\ref{cor htpy trivial} and Corollary~\ref{cor htpy splitting}.

Observe that for any $N_\infty$ operad $\cP$, the object $\sphere_{(\fp)}$ admits the structure of a commutative monoid in
orthogonal $G$-spectra, hence admits a natural $\cP$-algebra action that factors through the commutative operad.

\begin{cor} \label{local cor htpy}
Let $L \leq G$ be a $\fp$-perfect subgroup and let $e_L \in \upi_0^G(\sphere)$ be the associated idempotent. For any
$\Sigma$-cofibrant $N_\infty$ operad $\cO_L$ whose associated indexing system is $\cI_L$, the following hold:
\begin{enumerate}[i)]
  \item The $G$-spectrum $\Loc{\sphere_{(\fp)}}{e_L}$ is an $\cO_L$-algebra under $\sphere_{(\fp)}$.
  \item In the poset of homotopy types of $N_\infty$ operads, $\cO_L$ is maximal among the elements that satisfy i).
  \item The map $\sphere_{(\fp)} \to \Loc{\sphere_{(\fp)}}{e_L}$ is a localization at $e_L$ in the category of $\cO_L$-algebras.
\end{enumerate}
\end{cor}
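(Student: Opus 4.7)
My plan is to deduce this corollary directly from the algebraic results (Theorem~\ref{local Thm A} and Theorem~\ref{local Thm D}), together with the preservation machinery recalled in Section~\ref{subsection:preservation htpy}. The key observation is that $\sphere_{(\fp)}$ is a commutative monoid in orthogonal $G$-spectra, hence an $\cO_L$-algebra via restriction along the essentially unique map from $\cO_L$ to the commutative operad, and the localization map $\sphere_{(\fp)} \to \Loc{\sphere_{(\fp)}}{e_L}$ is precisely Bousfield localization at the class $\cC_{e_L}$.

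For part i), I would invoke Proposition~\ref{prop translation White HH}: the $\fp$-local Bousfield localization $L_{\cC_{e_L}}$ preserves $\cO_L$-algebras if and only if, for every admissible orbit $H/K$ of $\cI_L$, the element $N_K^H R^G_K(e_L)$ divides a power of $R^G_H(e_L)$ in $\pi_0^H(\sphere_{(\fp)}) \cong A(H)_{(\fp)}$. By Lemma~\ref{lemma little lemma}, since all these elements are idempotents, this division relation is equivalent to the equation $N_K^H R^G_K(e_L) \cdot R^G_H(e_L) = R^G_H(e_L)$, which is exactly the algebraic condition proved in Theorem~\ref{local Thm A} (via the chain of equivalences of Proposition~\ref{prop technical heart}). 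Since $\cO_L$ realizes the indexing system $\cI_L$, preservation therefore holds, so $\Loc{\sphere_{(\fp)}}{e_L}$ inherits a canonical $\cO_L$-algebra structure under $\sphere_{(\fp)}$.

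For part ii), I would argue by contradiction using Brun's theorem (Theorem~\ref{Brun relative}). Suppose $\cO'$ is a $\Sigma$-cofibrant $N_\infty$ operad, with associated indexing system $\cI'$ strictly larger than $\cI_L$, such that $\Loc{\sphere_{(\fp)}}{e_L}$ admits an $\cO'$-algebra structure under $\sphere_{(\fp)}$. Then Theorem~\ref{Brun relative} implies that $\underline{\pi}_0(\Loc{\sphere_{(\fp)}}{e_L}) \cong \Loc{\Ablpi}{e_L}$ is an $\cI'$-Tambara functor under $\Ablpi$, contradicting the maximality of $\cI_L$ established in Theorem~\ref{local Thm D}~ii). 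Using the equivalence of categories from Theorem~\ref{realization}, this suffices to conclude maximality in the poset of homotopy types of $N_\infty$ operads.

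For part iii), I would verify Definition~\ref{def preservation}: the preservation statement of part i) produces, for every cofibrant $\cO_L$-algebra $E$, an $\cO_L$-algebra $\tilde{E}$ whose underlying $G$-spectrum is $\cC_{e_L}$-local, together with a homomorphism $E \to \tilde E$ lifting the Bousfield localization. Applying this to a cofibrant replacement of $\sphere_{(\fp)}$ in $\cO_L$-algebras identifies $\Loc{\sphere_{(\fp)}}{e_L}$, up to equivalence of $\cO_L$-algebras under $\sphere_{(\fp)}$, with the Bousfield localization taken inside $\cO_L$-algebras; the universal property for inverting $e_L$ then follows because $\cC_{e_L}$-locality is exactly $e_L$-locality on equivariant homotopy groups. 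The main technical hurdle will be checking, in part iii), that the $\cO_L$-algebra Bousfield localization produced by preservation genuinely has the universal property for inverting $e_L$ in the homotopy category of $\cO_L$-algebras; I expect this to follow formally from monoidality of $L_{\cC_{e_L}}$ and the standard model-categorical transfer to algebras over a $\Sigma$-cofibrant operad, parallel to the algebraic argument in the proof of Theorem~\ref{thm alg preservation}.
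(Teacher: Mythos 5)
Your proposal is correct and follows essentially the same route as the paper: use Proposition~\ref{prop translation White HH} together with the divisibility relations for the idempotents $e_L$ to get parts i) and iii), and Brun's Theorem~\ref{Brun relative} combined with the algebraic maximality from Theorem~\ref{local Thm D}~ii) to get part ii). The only cosmetic difference is that you extract the divisibility relations from the chain of equivalences in the proof of Theorem~\ref{local Thm A} (via Proposition~\ref{prop technical heart} and Lemma~\ref{lemma little lemma}), whereas the paper packages them through Theorem~\ref{local Thm D}~iii) and Theorem~\ref{thm alg preservation}; these are equivalent, and your more cautious wording on part iii) matches what Definition~\ref{def preservation} already encodes.
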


The cofibrancy assumption does not impose an obstruction to the existence of
$\cO_L$, see Remark \ref{rem cofibrancy one}.

\begin{proof}
Part iii) of Theorem~\ref{local Thm D} combined with Theorem~\ref{thm alg preservation} show that certain
divisibility relations hold in the $\IL$-Tambara functor $\upi_0(\sphere) \cong \Ablpi$. Then
Proposition~\ref{prop translation White HH} guarantees that $e_L$-localization preserves $\OL$-algebras, which
implies statements i) and iii).
For part ii), assume that there is an $N_\infty$ operad $\cO'$
whose homotopy type is strictly greater than that of $\cO_L$ such that $\Loc{\sphere_{(\fp)}}{e_L}$ is an $\cO'$-algebra.
Then, by Theorem \ref{Brun relative}, its $0$-th equivariant homotopy forms a $\cI'$-Tambara functor for the
indexing system $\cI'$ corresponding to $\cO'$. But this contradicts the maximality proved in Corollary \ref{local Thm D}.
\end{proof}

The following local enhancement of Corollary \ref{cor htpy trivial} is a homotopical reformulation of Corollary \ref{local cor alg
trivial} (Corollary \ref{cor alg trivial}).

\begin{cor} \label{local cor htpy trivial}
The $G$-spectrum $\Loc{\sphere_{(\fp)}}{e_L}$ is a $G$-$E_\infty$ ring spectrum if and only if $L=1$ is the trivial group.
\end{cor}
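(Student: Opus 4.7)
The plan is to deduce this from Corollary \ref{local cor htpy} by identifying precisely when the indexing system $\cI_L$ is the complete one (the one corresponding to $G$-$E_\infty$ operads), and otherwise to contradict the maximality statement together with the algebraic obstruction of Corollary \ref{local cor alg trivial}.

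For the ``if'' direction, I would first observe that when $L = 1$, the condition ($\star$) of Theorem~\ref{local Thm A} is trivially satisfied for every pair $K \leq H \leq G$, since the only $G$-conjugate of the trivial subgroup is itself, which is contained in every $K$. Consequently every orbit $H/K$ is admissible for $\cI_1$, so by Theorem~\ref{iso IS D} (or directly from Definition~\ref{def indexing system}) the indexing system $\cI_1$ coincides with the complete indexing system $\underline{\Set}$. By Theorem~\ref{realization}, a $\Sigma$-cofibrant $N_\infty$ operad $\cO_1$ realizing $\cI_1$ therefore has the homotopy type of a $G$-$E_\infty$ operad (Definition~\ref{def G-E infty}). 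Now Corollary~\ref{local cor htpy}~i) hands us an $\cO_1$-algebra structure on $\Loc{\sphere_{(\fp)}}{e_1}$ under $\sphere_{(\fp)}$, which is exactly a $G$-$E_\infty$ ring structure.

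For the ``only if'' direction, suppose $L \neq 1$ is $\fp$-perfect and that $\Loc{\sphere_{(\fp)}}{e_L}$ admits a $G$-$E_\infty$ ring structure, i.e., is an algebra over a $\Sigma$-cofibrant $G$-$E_\infty$ operad $\cO$. By Theorem~\ref{Brun relative}, its $\underline{\pi}_0$ is then a full Tambara functor (an $\cI$-Tambara functor for the complete indexing system). Since $\underline{\pi}_0 \Loc{\sphere_{(\fp)}}{e_L} \cong \Loc{\Ablpi}{e_L}$ by Theorem~\ref{segals theorem} and Lemma~\ref{lemma norm commutes with localization}, this contradicts Corollary~\ref{local cor alg trivial}, which asserts that $\Loc{\Ablpi}{e_L}$ can only inherit all norms $\tilde{N}_K^H$ from $\Ablpi$ when $L = 1$. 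Alternatively, one can invoke the maximality clause of Corollary~\ref{local cor htpy}~ii): the existence of a $G$-$E_\infty$ structure would force $\cI_L$ to be the complete indexing system, but for $L \neq 1$ the pair $K = 1, H = L$ fails condition ($\star$), so $\cI_L$ is strictly smaller, a contradiction.

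There is no real obstacle here, as the work has all been done in Theorem~\ref{local Thm A}, Corollary~\ref{local cor alg trivial} and Corollary~\ref{local cor htpy}; the only thing to verify is the identification $\cI_1 = \underline{\Set}$ via ($\star$), which is immediate. The mild subtlety to flag for the reader is that the argument crucially uses the equivalence between $N_\infty$ operads and indexing systems of Theorem~\ref{realization} (to pass from the combinatorial condition ``all orbits admissible'' to the homotopical statement ``$\cO_1$ is $G$-$E_\infty$''), together with the translation between $N_\infty$ ring spectra and $\cI$-Tambara functors on $\underline{\pi}_0$ provided by Theorem~\ref{Brun relative}.
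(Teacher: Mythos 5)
Your proof is correct and follows essentially the same route the paper has in mind: the paper presents this corollary as a direct homotopical reformulation of Corollary~\ref{local cor alg trivial}, mediated by Corollary~\ref{local cor htpy}, and your ``if'' direction together with the maximality argument for ``only if'' carry this out precisely. The identification $\cI_1 = \underline{\Set}$ via ($\star$) and the failing pair $K = 1$, $H = L$ for $L \neq 1$ are both checked correctly.

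One subtlety worth flagging in the first version of your ``only if'' argument: Corollary~\ref{local cor alg trivial} is a statement about the norms \emph{inherited} from $\Ablpi$, whereas Theorem~\ref{Brun relative} hands you an a priori arbitrary Tambara functor structure on $\underline{\pi}_0$. The link between the two is transparent only if one reads ``$G$-$E_\infty$ ring spectrum'' as ``$G$-$E_\infty$ ring under $\sphere_{(\fp)}$,'' which is the reading the paper is tacitly using (the Main Question~\ref{main Q htpy} is phrased in terms of descent of norms, and Corollary~\ref{local cor htpy} is explicitly about $\cO_L$-algebras under $\sphere_{(\fp)}$). Your alternative argument via the maximality in Corollary~\ref{local cor htpy}~ii) is cleanest under this reading and is exactly the mechanism the paper relies on. If one wanted the stronger version ruling out \emph{any} $G$-$E_\infty$ structure, one can observe that for $L \neq 1$ the Green ring $\Loc{\Ablpi}{e_L}$ vanishes at the trivial subgroup but not at $G$, and no Tambara functor $\uR$ can have $\uR(K) = 0$ with $\uR(H) \neq 0$ for $K \leq H$ (since $N_K^H(0) = 0$ in a Tambara functor, yet $N_K^H$ is a map of multiplicative monoids forcing $N_K^H(0)= N_K^H(1) = 1$). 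Finally, a small citation point: the identification $\underline{\pi}_0\Loc{\sphere_{(\fp)}}{e_L} \cong \Loc{\Ablpi}{e_L}$ comes from the proposition describing Bousfield localization at $\cC_{e_L}$ (part i), smashing with $\Loc{\sphere}{e_L}$), rather than from Lemma~\ref{lemma norm commutes with localization}, which addresses a different commutation.
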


In particular, we see that the idempotent splitting of $\sphere$ is far from being a splitting of $G$-$E_\infty$ ring spectra.
Locally at the prime $p$, Corollary~\ref{local cor htpy trivial} recovers a (yet unpublished) result of Grodal.

\begin{thm}[\cite{grodal:burnside}, Cor.~5.5] \label{thm grodal}
The $G$-spectrum $\Loc{\sphere_{(p)}}{e_1}$ is a $G$-$E_\infty$ ring spectrum.
\end{thm}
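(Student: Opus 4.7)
The plan is to derive Theorem~\ref{thm grodal} directly from the ``if'' direction of Corollary~\ref{local cor htpy trivial}, specialized to the collection of primes $\fp = \{p\}$ and the perfect subgroup $L = 1$. Since Corollary~\ref{local cor htpy trivial} will already be established by the time we reach this statement, essentially the only thing to do is to unwind what it says in this case.

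First, I would invoke Corollary~\ref{local cor htpy}~i): the localization $\Loc{\sphere_{(p)}}{e_1}$ carries an $\cO_1$-algebra structure, where $\cO_1$ is any $\Sigma$-cofibrant $N_\infty$ operad whose associated indexing system is $\cI_1$. By Definition~\ref{def G-E infty} together with the realization result Theorem~\ref{realization}, it therefore suffices to show that $\cI_1$ is the complete indexing system, since then $\cO_1$ can be chosen to be a $G$-$E_\infty$ operad and any algebra over it is a $G$-$E_\infty$ ring spectrum.

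To identify $\cI_1$ as the complete indexing system, I would apply Theorem~\ref{local Thm A} with $L = 1$: an orbit $H/K$ is admissible for $\cI_1$ precisely when every subgroup $L' \leq H$ conjugate in $G$ to the trivial group is contained in $K$. The only subgroup conjugate to the trivial subgroup is the trivial subgroup itself, and it sits inside every $K$, so condition~($\star$) holds vacuously for all nested $K \leq H \leq G$. Hence every orbit is admissible, $\cI_1$ is complete, and $\Loc{\sphere_{(p)}}{e_1}$ acquires the desired $G$-$E_\infty$ ring structure.

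The main obstacle is essentially nonexistent: in the framework established earlier in the paper, Theorem~\ref{thm grodal} is a formal consequence of the maximality statement in Theorem~\ref{local Thm D} combined with the preservation result of Proposition~\ref{prop translation White HH}. All substantive homotopical and algebraic work has been carried out upstream, and the only task at this stage is to observe that $L = 1$ trivializes condition~($\star$).
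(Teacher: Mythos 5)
Your proposal is correct and matches the paper's own treatment: the paper states Theorem~\ref{thm grodal} immediately after Corollary~\ref{local cor htpy trivial} as the specialization $\fp = \{p\}$, $L = 1$, with no further argument. The extra unwinding you provide (showing $\cI_1$ is complete because condition ($\star$) holds vacuously for $L=1$, then invoking Corollary~\ref{local cor htpy}) simply reconstructs the reasoning already packaged into Corollary~\ref{local cor alg trivial} and its homotopical reformulation upstream.
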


Finally, we state the homotopy-theoretic analogue of Corollary~\ref{local cor alg splitting} in order to describe the maximal
$N_\infty$-ring structure preserved by the $\fp$-local idempotent splitting of the sphere. It is the local reformulation of
Corollary~\ref{cor htpy splitting}.

\begin{cor} \label{local cor htpy splitting}
Let $\cO$ be a $\Sigma$-cofibrant $N_\infty$ operad realizing the indexing system $\cI = \cap_{(L)} \cI_L$.
Then the idempotent splitting
\[ \sphere_{(\fp)} \simeq \prod_{(L) \leq G} \Loc{\sphere_{(\fp)}}{e_L} \]
is an equivalence of $\cO$-algebras, where the product is taken over conjugacy classes of $\fp$-perfect subgroups.
\end{cor}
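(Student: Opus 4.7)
The plan is to deduce the corollary by combining the preservation criterion for $\cO$-algebra structure under $e_L$-localization (Proposition \ref{prop translation White HH}) with the splitting on underlying $G$-spectra from Proposition \ref{htpy Dress splitting}. Concretely, I would proceed in three stages: first verify that each idempotent summand is naturally an $\cO$-algebra, then verify that each localization map is an $\cO$-algebra map, and finally assemble these via the universal property of the product and detect the equivalence on underlying $G$-spectra.

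For each $\fp$-perfect $L \leq G$, I would apply Proposition \ref{prop translation White HH} to $\cO$ itself, not only to the operad $\cO_L$ used in Corollary \ref{local cor htpy}. The hypothesis to check is that for every admissible transitive $H$-set $H/K$ of $\cI$, the element $N_K^H R^G_K(e_L)$ divides a power of $R^G_H(e_L)$ in $\pi_0^H(\sphere_{(\fp)}) \cong A(H)_{(\fp)}$. By construction $\cI = \bigcap_{(L')} \cI_{L'} \subseteq \cI_L$, so every admissible set of $\cI$ is admissible for $\cI_L$, and Theorem \ref{local Thm D}~iii) together with Theorem \ref{thm alg preservation} supply the required divisibility relation in $\Ablpi$, hence in its identification with $\upi_0(\sphere_{(\fp)})$. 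Consequently, $\cC_{e_L}$-localization preserves $\cO$-algebras for every $L$: each $\Loc{\sphere_{(\fp)}}{e_L}$ is an $\cO$-algebra under $\sphere_{(\fp)}$, and the canonical map $\sphere_{(\fp)} \to \Loc{\sphere_{(\fp)}}{e_L}$ is (equivalent in the homotopy category of $\cO$-algebras to) a map of $\cO$-algebras in the sense of Definition \ref{def preservation}.

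Having arranged this for each $L$ separately, I would assemble the resulting maps via the universal property of products in $\cO$-algebras. Since the forgetful functor from $\cO$-algebras to $\SpG$ creates finite products, the product $\prod_{(L)} \Loc{\sphere_{(\fp)}}{e_L}$ is again an $\cO$-algebra whose underlying $G$-spectrum agrees with the product computed in $\SpG$. The induced map
\[ \sphere_{(\fp)} \to \prod_{(L) \leq G} \Loc{\sphere_{(\fp)}}{e_L} \]
is therefore an $\cO$-algebra map. By Proposition \ref{htpy Dress splitting} it is a weak equivalence of underlying $G$-spectra, and since weak equivalences of $\cO$-algebras are detected on underlying $G$-spectra (as fibrations and weak equivalences are transferred from $\SpG$), it is an equivalence of $\cO$-algebras.

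The main technical obstacle I anticipate is the usual gap between ``existence of an $\cO$-algebra structure on the localization'' and ``the localization map is a strict $\cO$-algebra map.'' Definition \ref{def preservation} is phrased in terms of a cofibrant replacement together with a weak equivalence of $\cO$-algebras, and one has to verify that these replacements are compatible across the various $L$ and across the product, so that the assembled map really is an $\cO$-algebra map (rather than just an $\cO$-algebra map after replacement). This can be handled by choosing a single cofibrant replacement $\sphere_{(\fp),c}$ of $\sphere_{(\fp)}$ as an $\cO$-algebra, taking each localization $\Loc{\sphere_{(\fp),c}}{e_L}$ as in Lemma \ref{lemma norm commutes with localization}, and invoking the product in $\cO$-algebras; once this bookkeeping is done, the detection of equivalences on underlying $G$-spectra concludes the argument.
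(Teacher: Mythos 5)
Your argument is correct and follows the same route as the paper: both deduce the underlying equivalence of $G$-spectra from Proposition~\ref{htpy Dress splitting} and establish the $\cO$-algebra compatibility of the localization maps via the preservation criterion. The paper's proof cites Corollary~\ref{local cor htpy} (which gives $\cO_L$-algebra structure, implicitly restricted along the map of operads $\cO \to \cO_L$ coming from $\cI \subseteq \cI_L$), whereas you apply Proposition~\ref{prop translation White HH} directly to $\cO$ for each $L$; this is a minor reorganization that makes the required divisibility checks explicit rather than inherited, but it is not a genuinely different argument.
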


\begin{proof}
The splitting is an equivalence of $G$-spectra by Proposition~\ref{htpy Dress splitting}. Moreover,
all of the maps to the localizations are maps of $\cO$-algebras, as can be seen from \ref{local cor htpy}.
\end{proof}

Together, Corollary~\ref{local cor htpy} and Corollary~\ref{local cor htpy splitting} answer
Question~\ref{main Q htpy} completely, for any family of primes inverted.
\section{Examples} \label{section:examples}
We illustrate our results in the rational case, in the case of the alternating group $A_5$,
working integrally, and that of the symmetric group $\Sigma_3$, working $3$-locally.

\subsection{The rational case}
In the case when $\fp = \emptyset$ and hence $\Z_{(\fp)} = \Q$, the rational Burnside ring
$A(G)_\Q$ has exactly one primitive idempotent $e_L$ for each conjugacy class of subgroups $L \leq G$. The
incomplete Tambara functor structures of the idempotent summands $\Loc{A(-)_\Q}{e_L}$ depend on
the subgroup structure of $G$ as described by Theorem~\ref{local Thm A}. However, it is
immediately clear from Lemma~\ref{admissible sets of I} that the idempotent splitting is only a
splitting of Green rings, but not a splitting of $\cI'$-Tambara functors for any indexing system
$\cI'$ greater than the minimal one. This phenomenon is also discussed in
\cite[Section~7]{BGK:AlgebraicModelNaive}, and it is precisely the reason why their approach only provides
an algebraic model for the rational homotopy theory of naive $N_\infty$ ring spectra, but cannot
possibly account for any non-trivial Hill-Hopkins-Ravenel norms.

\subsection{The alternating group $A_5$}
It is well-known that $A_5$ is the smallest non-trivial perfect group. Thus, it is the smallest example of a group whose Burnside
ring admits a non-trivial idempotent splitting when working integrally. Indeed, the only perfect subgroups are $1$ and
$A_5$, and these give rise to idempotent elements $e_1, e_{A_5} \in A(A_5)$. Theorem \ref{Dress idempotents} implies that their
marks are given by
\[ \phi^H(e_{A_5}) = \begin{cases}
1, & H = A_5 \\
0, & H \neq A_5 \\
\end{cases} \]
and vice versa for $e_1$.
We know from Corollary \ref{local cor alg trivial} that $\Loc{A(A_5)}{e_1}$ is a complete Tambara functor. On the other hand,
$\Loc{A(H)}{e_{A_5}}$ is trivial unless $H = A_5$, hence there cannot be any norm maps $N_H^{A_5}$ for proper subgroups $H \leq
A_5$. Moreover, by Corollary~\ref{local cor alg splitting}, the idempotent splitting of $\Abl$ is a splitting of $\cI_{A_5}$-Tambara
functors, i.e., it only preserves norms between proper subgroups. \\
By Corollary~\ref{local cor htpy} and Corollary~\ref{local cor htpy splitting}, the analogous statements hold for the $N_\infty$
ring structures on $\Loc{\sphere}{e_1}$ and $\Loc{\sphere}{e_{A_5}}$. Just like Example \ref{example hill}, this provides another
instance of the phenomenon that inverting a single homotopy element does not preserve any of the Hill-Hopkins-Ravenel norm maps
from proper subgroups to the ambient group. \\
Of course, all of this holds for any perfect group $G$ whose only perfect subgroup is the trivial
group.

\subsection{The symmetric group $\Sigma_3$ at the prime $3$}
Since $\Sigma_3$ is solvable, its Burnside ring $A(\Sigma_3)$ does not have any idempotents other than zero and one. We can obtain
interesting idempotent splittings by working locally at primes $p$ dividing the group order. All $2$-perfect subgroups of $\Sigma_3$
are normal, hence the case $p=2$ is completely covered by Corollary~\ref{local cor alg normal} and we only discuss the more
interesting case $p=3$ in detail.

Any map in the orbit category can be factored as an isomorphism followed by a canonical surjection, hence the admissibilty of
$\Sigma_3/H$ just depends on the conjugacy class of $H$ and we can just write $C_2$ for any of the three conjugate subgroups of
order two. Note that the $3$-residual subgroups $O^3(H)$ for $H \leq \Sigma_3$ are given as follows:
\[ \O^3(H) = \begin{cases}
\Sigma_3, & H=\Sigma_3 \\
1, & H=A_3 \\
C_2, & H=C_2 \\
1, & H=1
\end{cases} \]
Thus, all subgroups of $\Sigma_3$ except for $A_3$ are $3$-perfect. All subgroups of order two are conjugate in $\Sigma_3$, so there
are three idempotent elements in $A(\Sigma_3)_{(3)}$, corresponding to the conjugacy classes of
the $3$-perfect subgroups $1, C_2$ and $\Sigma_3$. In terms of marks, they are given as
\begin{center}
\begin{tabular}{c|c|c|c}
Subgroup $H \leq \Sigma_3$ & $\phi^H(e_1)$ & $\phi^H(e_{C_2})$ & $\phi^H(e_{\Sigma_3})$ \\ \hline
$1$        & 1 & 0 & 0 \\
$C_2$      & 0 & 1 & 0 \\
$A_3$      & 1 & 0 & 0 \\
$\Sigma_3$ & 0 & 0 & 1
\end{tabular}
\end{center}
The localization $\Loc{A(-)_{(3)}}{e_1}$ admits all norms by Corollary \ref{local cor alg trivial}. The norm maps of
$\Loc{A(-)_{(3)}}{e_{\Sigma_3}}$ are described by Corollary \ref{local cor alg normal}. In detail, this Mackey functor is zero at
all proper subgroups, but non-trivial at $\Sigma_3$. Consequently, there are norm maps $\tilde{N}_K^H$ if and only if $H$ and hence
$K$ is a proper subgroup of $\Sigma_3$, but all of these norms are maps between trivial rings.
\\
It remains to describe the idempotent localization $\Loc{A(-)_{(3)}}{e_{C_2}}$. The left hand diagram below
depicts the subgroups $H \leq \Sigma_3$ (up to conjugacy) and their inclusions. The right hand diagram
displays the ranks (as free $\Z_{(3)}$-modules) of the corresponding values of $\Loc{A(-)_{(3)}}{e_{C_2}}$ at
the subgroup $H$.
\begin{center}
$ \xymatrix@M=6pt@R=1.2pc{
	\,           & \Sigma_3         & \,          &\,& \,               & 1          & \,        \\
	\,           & \,               & A_3 \ar[ul] &\,& \,               & \,         & 0         \\
	C_2 \ar[uur] & \,               & \,          &\,& 1 \ar@{-->}[uur] & \,         & \,        \\
	\,           & 1 \ar[ul]
	              \ar[uuu] \ar[uur] & \,          &\,& \,               & 0 \ar[uur] & \,        \\
}$
\end{center}
There is a norm map from $1$ to $A_3$ for trivial reasons (indicated by the solid arrow) and the only other norm maps which
could potentially exist would be the maps $\tilde{N}_{C_2}^{\Sigma_3}$ where $C_2$ is any subgroup of order two (indicated by the
dashed arrow). However, if we choose $K=L=(12)$ and let $L' = (13)$, then the condition ($\star$)
of Theorem \ref{local Thm A} is not satisfied. Indeed, $L'$ is conjugate to $L$, but not contained in $K$. Consequently,
there is no norm map $\tilde{N}_{C_2}^{\Sigma_3}$. \\
We see from Lemma~\ref{admissible sets of I} that $\cI = \cI_{C_2}$, so in this case the idempotent splittings of $A(-)_{(3)}$ and
hence $\sphere_{(3)}$ only preserve the norm map $N_1^{A_3}$.
\section{Applications} \label{section:applications}

\subsection{Norm functors in the idempotent splitting of $\SpG$}
Any $G$-spectrum $X$ is a module over the sphere spectrum, hence admits an idempotent splitting
\[ X \simeq \prod_{(L) \leq G} \Loc{X}{e_L} \]
where $\Loc{X}{e_L}$ is the sequential homotopy colimit along countably many copies of the map
$X \cong X \wedge \sphere \stackrel{\id \wedge e_L}{\longrightarrow} X \wedge \sphere \cong X$. Thus, the
idempotent elements of $\AG$ induce a product decomposition of the category of $G$-spectra $\SpG$ by breaking
it up into categories of modules over the idempotent summands $\Loc{\sphere}{e_L}$. This splitting is
homotopically meaningful as it can be upgraded to a Quillen equivalence of model categories; similar
statements hold in the local cases. See~\cite[Thm.~4.4, Section~6]{barnes:splitting} for a more thorough
discussion. While this only depends on the additive splitting of Proposition~\ref{htpy Dress splitting}, some
additional multiplicative structure is present.

It is useful to consider not just the category of ($\fp$-local) $G$-spectra, but rather the symmetric monoidal categories of
($\fp$-local) $H$-spectra for all subgroups $H \leq G$ together with their restriction and norm functors. This kind of structure has
been studied in \cite{HH:EqvarSymMon, BH:modules-v2} under the name of $G$-\emph{symmetric monoidal
categories}.
From this perspective, Theorem~\ref{Thm A} measures the failure of the idempotent splitting of $\SpG$ to give rise to a splitting of $G$-symmetric
monoidal categories. Indeed, the factors only admit some of the Hill-Hopkins-Ravenel norm functors and hence form ``incomplete
$G$-symmetric monoidal categories'':
\nodetails{}{\redfont{LATER: Explain a bit more?}}

\begin{cor} \label{local norms modules}
Let $L \leq G$ be $\fp$-perfect and let $\cO_L$ be as in Corollary~\ref{local cor htpy}. Assume furthermore
that $\cO_L$ has the homotopy type of the linear isometries operad on a (possibly incomplete) universe $U$.
For all admissible sets $H/K$ of $\cI_L$, there are norm functors
\[ {~}_{\Res_H(\Loc{\sphere_{(\fp)}}{e_L})} N_{K, \Res_K(U)}^{H, \Res_H(U)} \colon
Mod(\Res^G_K(\Loc{\sphere_{(\fp)}}{e_L})) \to Mod(\Res^G_H(\Loc{\sphere_{(\fp)}}{e_L})) \]
built from the smash product relative to $\Loc{\sphere_{(\fp)}}{e_L}$
which satisfy a number of relations analogous to those for the norm functor $\mathrm{Sp}^H \to \SpG$, stated
in \cite[Thm.~1.3]{BH:modules-v2}.
\end{cor}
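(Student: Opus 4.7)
The plan is to derive the corollary as a direct application of the module theory of Blumberg-Hill \cite{BH:modules-v2} once Corollary~\ref{local cor htpy} has been invoked. That corollary already supplies the crucial input: $R := \Loc{\sphere_{(\fp)}}{e_L}$ is an $\cO_L$-algebra under $\sphere_{(\fp)}$, and by hypothesis $\cO_L$ is equivalent in the homotopy category of $N_\infty$ operads to $\cL(U)$ for some (possibly incomplete) $G$-universe $U$. This places $R$ exactly in the setting in which \cite{BH:modules-v2} constructs norm functors on module categories.

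First I would pass to restrictions: for each $H \leq G$, the $H$-spectrum $\Res^G_H R$ inherits a $\cL(\Res^G_H U)$-algebra structure by restricting the operadic action along $H \hookrightarrow G$ and using that restriction of operads commutes with the linear isometries construction. Next, fixing an admissible set $H/K \in \cI_L$, the correspondence between admissibility in $\cL(V)$ and equivariant embeddings into $V$ (\cite[Lemma~3.15]{BH:OperMult}) produces an $H$-equivariant linear isometric embedding of $\Ind_K^H(\Res^G_K U)$ into $\Res^G_H U$. This is precisely the data the construction of \cite{BH:modules-v2} needs in order to build the norm functor ${~}_R N_{K,\Res^G_K U}^{H,\Res^G_H U}$ as an indexed smash product relative to $R$. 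The relations analogous to those in \cite[Thm.~1.3]{BH:modules-v2} then transfer to our setting one admissible set at a time, since Blumberg-Hill's proofs use only the underlying universe and the $\cL$-algebra structure of the ambient commutative ring spectrum, without appealing to completeness of the universe in any global way.

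The main obstacle I anticipate is bookkeeping rather than conceptual. Each relation of \cite[Thm.~1.3]{BH:modules-v2} was originally phrased with complete universes in the background, so one must systematically pass through loc.~cit.~and verify that the statement and its proof continue to make sense when the universes $\Res^G_K U$ and $\Res^G_H U$ together with the $H$-set $H/K$ are mutually compatible in the sense dictated by $\cI_L$. A secondary concern is to check that the entire package descends cleanly to the $\fp$-local positive complete model structure on orthogonal $G$-spectra; this is routine because $\fp$-localization is a smashing, monoidal Bousfield localization and $R$ is already $\fp$-local by construction, but it warrants explicit mention to keep the argument self-contained.
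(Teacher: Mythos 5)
Your proposal is essentially the same argument the paper gives. The paper's entire proof is the remark immediately following the statement: ``This is an immediate application of \cite[Thm.~1.1, Thm.~1.3]{BH:modules-v2} to Corollary~\ref{local cor htpy}.'' You invoke exactly that corollary for the $\cO_L$-algebra structure on $R=\Loc{\sphere_{(\fp)}}{e_L}$, exploit the hypothesis that $\cO_L$ is a linear isometries operad so that the construction of \cite{BH:modules-v2} applies, and observe that the relations transfer. Your extra paragraphs merely unpack what ``immediate application'' means, which is accurate and reasonable exposition rather than a new route.
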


This is an immediate application of \cite[Thm.~1.1, Thm.~1.3]{BH:modules-v2} to Corollary~\ref{local cor htpy}. We refer to
\cite{BH:modules-v2} for a detailed discussion of modules over $N_\infty$ ring spectra.

The reason for the ``linear isometries'' hypothesis is explained in the introduction to \cite{BH:modules-v2}. It is
expected that it is not necessary, and that the $\infty$-categorical tools developed in \cite{barwick:PHCT-intro} and its
sequels will remove this technical assumption.

\nodetails{}{\redfont{Give a characterization of those $\cI_L$ which correspond to linear isometries operads?}}

\subsection{Idempotent splittings of equivariant topological K-theory}
Our main questions, Question~\ref{main Q htpy} and Question~\ref{main Q alg}, can be asked for any $G$-$E_\infty$ ring spectrum and
its idempotent splitting, assuming there are only finitely many primitive idempotents and that these admit a suitably explicit
description. In the sequel \cite{boehme:idempotent-characters_v2}, we will answer the analogues of our main
questions for the $G$-equivariant complex topological K-theory spectrum $KU_G$ and its real analogue $KO_G$. It turns out that
the solution can be reduced to the one given here, but in order to see this, a careful analysis of the complex
representation ring and its relationship with the Burnside ring is required.

\phantomsection
\addcontentsline{toc}{section}{Bibliography}
\bibliographystyle{alpha}
{\footnotesize \bibliography{topology} }

\end{document}